\documentclass%[dvipdfmx,a4paper]
{article}
\usepackage{amsmath, amssymb, amsthm, graphicx,bm,
cite, 
comment}\usepackage{tikz-cd}
\usepackage[all]{xy}
\usepackage{titlesec}\usepackage{calligra,mathrsfs}
\DeclareMathOperator{\gr}{gr}
\DeclareMathOperator{\supp}{supp}

\DeclareMathOperator{\spec}{Spec}
\DeclareMathOperator{\Def}{Def}  
  
\theoremstyle{plain}% default
\newtheorem{thm}{Theorem}[section]
\newtheorem{lem}[thm]{Lemma}
\newtheorem{cor}[thm]{Corollary}
\newtheorem{prop}[thm]{Proposition}

\newtheorem*{thm*}{Theorem}
\newtheorem*{lem*}{Lemma}
\newtheorem*{cor*}{Corollary}

\theoremstyle{definition}
\newtheorem{dfn}[thm]{Definition}

\newtheorem{rmk}[thm]{Remark}

\newtheorem*{dfn*}{Definition}

\newtheorem*{hp*}{Hypothesis}

\numberwithin{equation}{section}\numberwithin{figure}{section}

 \def\reg{{ { \rm reg}  }}

\def\tp#1{{}^t\!#1}

\def\e#1\e{\begin{equation}#1\end{equation}}
\def\iz#1\iz{\begin{itemize}#1\end{itemize}}
\def\ea#1\ea{\e{\begin{split}#1\end{split}}\e}
\def\eq{\eqref}
\def\l{\label}
\def\0{\hspace{0pt}}

\def\ph{\phi}

\def\Uh_#1{\,\widehat{\!U}_{\!#1}}

\def\Ph{\Phi}

\def\ps{\psi}

\def\dim{\mathop{\rm dim}\nolimits}

\def\ker{\mathop{\rm ker}}
\def\im{\mathop{\rm im}}
\def\coker{\mathop{\rm coker}}

\def\Ext{\mathop{\rm Ext}\nolimits}

\def\ge{\geqslant}
\def\le{\leqslant\nobreak}

\def\cE{{{\mathcal E}}}
\def\cF{{{\mathcal F}}}
\def\cG{{{\mathcal G}}}

\def\O{{{\mathcal O}}}

\def\cU{{{\mathcal U}}}

\def\={\equiv}

\def\cX{{{\mathcal X}}}
\def\cY{{{\mathcal Y}}}

\def\C{{{\mathbb C}}}

\def\Q{{{\mathbb Q}}}
\def\R{{{\mathbb R}}}
\def\Z{{{\mathbb Z}}}

\def\al{\alpha}
\def\be{\beta}
\def\ga{\gamma}
\def\de{\delta}
\def\io{\iota}
\def\ep{\epsilon}

\def\ta{{\tau}}
\def\ze{\zeta}

\def\om{\omega}
\def\De{\Delta}

\def\Si{\Sigma}

\def\Th{\Theta}
\def\Om{\Omega}
\def\Ga{\Gamma}

\def\db{{\bar\bd}}%{{\,\ov{\!\partial}}}
\def\ts{\textstyle}%\def\st{\scriptstyle}

\def\-{\setminus}
\def\op{\oplus}

\def\ov{\overline}

\def\ul{\underline}

\def\cm{\circ}

\def\sb{\subseteq}\def\bd{\partial}\def\sing{{\rm sing}}

\begin{document}
%\date{}

\title{Deformations of Compact Calabi--Yau and Fano Varieties with Isolated Singularities}
\date{}
\author{Yohsuke Imagi}
\maketitle

\begin{abstract}
Let $X$ be a compact log-canonical K\"ahler $n$-fold, $n\ge3,$ with trivial canonical sheaf and with isolated singularities. We prove that the generic fibres of the semi-universal deformation of $X$ have Du Bois invariant $b^{1,n-2}=0$ at the singular points. Under a certain topological hypothesis on $X$ the generic fibres have also link invariant $l^{1,n-2}=0.$ If $X$ is a projective log-canonical $n$-fold, $n\ge3,$ with ample anti-canonical sheaf and with isolated singularities then the generic fibres have $b^{1,n-2}=l^{1,n-2}=0$ (without the topological hypothesis). These are generalizations of recent results of Tenie `Global smoothing of singular Fano and Calabi--Yau varieties' and an older result of Namikawa `Deformation theory of Calabi--Yau threefolds and certain invariants of singularities.'   
\end{abstract}

\section{Introduction}
The global smoothing problem for Calabi--Yau and Fano $n$-folds, $n\ge3,$ has been studied since the 1990s \cite{NS, Nam97, Nam Fano}. The following theorems are the fundamental results for Calabi--Yau $3$-folds. 
\begin{thm}[Namikawa--Steenbrink {\cite[Theorems 1.3]{NS}}]\label{thm: NS}
Let $X$ be a complex projective canonical $3$-fold whose canonical sheaf $\om_X$ is trivial and whose singularities are isolated hypersurface singularities. Then there exist an open neighbourhood $\De$ of $0\in\C$ and a proper flat morphism $f:\cX\to\De$ with $f^{-1}(0)\cong X$ and such that the singularities of $f^{-1}(t), t\in \De\-\{0\},$ are ordinary double points. \qed
\end{thm}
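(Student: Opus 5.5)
The proof of Theorem~\ref{thm: NS} combines unobstructedness of the deformations of $X$ with a local-to-global argument at each singular point. The plan has four steps.

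\textbf{Step 1: unobstructedness.} Since $\om_X\cong\O_X$ and the singularities are canonical and isolated hypersurface singularities, I would establish a $T^1$-lifting property. Set $X_m:=\cX_m\to\spec \C[t]/(t^{m+1})$. The key input is that $\Ext^1(\Om_{X_m/A_m},\O_{X_m})$ is locally free and compatible with base change. Because $\om_X$ is trivial, I would identify $\Ext^1(\Om_X,\O_X)$ with $\Ext^1(\Om_X,\om_X)$. Serre duality turns this into $H^2(X,\Om_X)^\vee$, modulo local contributions. I would then use the Hodge-theoretic degeneration for Du Bois (in particular rational) singularities, via Steenbrink's mixed Hodge theory, to get constancy of the relevant dimensions. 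Ran--Kawamata then gives a smooth semi-universal deformation space $\Def(X)$.

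\textbf{Step 2: local-to-global.} Consider the natural map
\[
\Ext^1(\Om_X,\O_X)\to H^0(X,\mathcal{E}xt^1(\Om_X,\O_X))=\bigoplus_{p\in\sing X}T^1_p .
\]
I would study its image via the local cohomology sequence. The obstruction to surjectivity lies in $H^2(X,\Theta_X)$, and a Serre-duality argument relates this to the cohomology of the link of each singular point.

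\textbf{Step 3: the Namikawa--Steenbrink lemma.} Let $\Om$ denote the Milnor fibre data at $p$. I would show that for a non-ordinary-double-point hypersurface singularity there exists a first-order deformation in the image whose local component is not ``purely ODP''. The concrete approach is to use the invariant $\mu-\tau$ together with the Du Bois invariant $b^{1,1}$. One shows that if the image of global deformations in $T^1_p$ were contained in the equisingular/non-smoothing locus, then $H^2$ of the link would contribute a nonzero class to $H^4_{p}(X)$ that is forced to vanish by $h^{2,0}$-type arguments, using that $\om_X$ is trivial.

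\textbf{Step 4: dimension count.} Since $\Def(X)$ is smooth and the image of the map in Step 2 is a sufficiently large subspace, the generic fibre over a general curve through $0$ has only singularities whose local deformation classes are generic in that subspace. Local versality of the hypersurface families then allows an induction on total Milnor number: whenever a non-ODP singularity survives, one deforms further and strictly decreases it. This yields $\De$ and $f$ as in the statement.

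The main obstacle is Step 3. Proving that the image in $T^1_p$ is large enough to exclude non-ODP singularities requires the precise mixed Hodge computation on the link, relating $H^2$ of the link to the kernel of the local-to-global map. I would first attempt it by Steenbrink's vanishing $H^q_{\{p\}}(\Om^{r}_{\tilde X}(\log E))=0$ for $r+q>n$ on a good resolution $\tilde X$.
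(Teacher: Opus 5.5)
\paragraph{The paper's route and your Step~3.}
The paper does not prove this statement. It quotes it from Namikawa--Steenbrink and only indicates that it follows from Tenie's Theorem~\ref{thm: Ten1}: Kawamata gives $H^1(X,\O_X)=0$ unless $X$ is smooth, projectivity gives the $\partial\db$ lemma, and isolated $1$-Du Bois complete-intersection $3$-fold points are ODPs. It therefore also follows from Theorem~\ref{thm: CY1}.

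Your skeleton (unobstructedness, a local-to-global map, then a deformation argument) is the classical one. However, Step~3 carries all of the content, it is not supplied, and the ingredients you name for it would not work.

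\begin{itemize}
\item ``Sufficiently large image'' cannot mean large in $\bigoplus_p T^1_p$. The local-to-global map genuinely fails to be surjective, already for ODPs, where smoothability is governed by relations among the classes of the exceptional curves of a small resolution. The image must be measured in the right quotient, namely $\bigoplus_x H^2_{E_x}(Y,\Om^2_Y(\log E))$, which by local duality has dimension $b^{11}(X,x)$ at $x$.
\item The invariant $\mu-\tau$ cannot separate ODPs from other points. For example, $x^2+y^2+z^2+w^3$ is quasi-homogeneous, so $\mu=\tau$, yet it is not an ODP.
\item The heuristic about the link, $H^4_p(X)$ and $h^{2,0}$ does not correspond to an actual argument. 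In addition, $H^2$ of the link lives in $H^3_p(X)$, not $H^4_p(X)$.
\item Most concretely, the vanishing you intend to use is reversed. Steenbrink's theorem is $H^q(Y,\Om^r_Y(\log E)(-E))=0$ for $r+q>n$, equivalently $H^q_E(Y,\Om^r_Y(\log E))=0$ for $r+q<n$. In your form, with $n=3$ and $r=q=2$, it would kill $H^2_E(Y,\Om^2_Y(\log E))$ and so force $b^{11}=0$ everywhere. But that is exactly the group that is nonzero at a non-ODP point and that the global deformations must hit.
\end{itemize}

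\paragraph{The missing chain.}
\begin{itemize}
\item Since $\om_X\cong\O_X$ and $X$ is Cohen--Macaulay, $T_o\Def(X)\cong H^1(X^\reg,\Th_{X^\reg})\cong H^1(X^\reg,\Om^2_{X^\reg})$.
\item By the Popa--Shen--Vo vanishing (Theorem~\ref{thm: PSV}), the connecting map $H^1(X^\reg,\Om^2_{X^\reg})\to\bigoplus_x H^2_{E_x}(Y,\Om^2_Y(\log E))$ is surjective; this is Corollary~\ref{cor: surj log} with $n=3$, $q=2$. This is where compactness and the $\partial\db$ lemma enter.
\item A non-ODP isolated canonical hypersurface $3$-fold point has $b^{11}\neq0$. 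For complete intersections, $b^{1,n-2}=0$ is equivalent to $1$-Du Bois, which in dimension $3$ forces an ODP.
\end{itemize}

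From here Tenie, following Namikawa--Steenbrink, uses these global classes as smoothing directions at the points with $b^{11}\neq0$ and integrates them using smoothness of $\Def(X)$.

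The paper's own route (Theorem~\ref{thm: CY1}) argues differently:
\begin{itemize}
\item Whenever the semi-universal family admits a simultaneous good resolution, every first-order deformation comes from $H^1(Y,\Th_Y(-\log E))$ (Theorem~\ref{thm: surj tang}). Such classes die in $H^2_E$, which forces $b^{11}=0$ (Lemma~\ref{lem: CY1}).
\item By the stratification of Theorem~\ref{thm: strat}, this applies to the fibres over a Zariski dense open subset of $\Def(X)$. Their singular points are still hypersurface points with $b^{11}=0$, hence ODPs.
\item A curve through $o$ into that subset gives $\De$ and $f$. Neither your Milnor-number induction in Step~4 nor smoothness of $\Def(X)$ is needed.
\end{itemize}

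\paragraph{Step~1.}
Simply cite the unobstructedness theorem. Your route through $H^2(X,\Om^1_X)$ and Du Bois degeneration fails as stated. The K\"ahler differentials agree with the Du Bois complex in degree $1$ only at $1$-Du Bois points, i.e.\ precisely not at the points you want to remove.
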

\begin{thm}[Namikawa--Steenbrink {\cite[Theorem 2.4]{NS}}]\label{thm: NS2}
Let $X$ be as in Theorem \ref{thm: NS} and suppose that it is $\Q$-factorial. Then there exist an open neighbourhood $\De$ of $0\in\C$ and a proper flat morphism $f:\cX\to\De$ with $f^{-1}(0)\cong X$ and whose other fibres are non-singular. \qed
\end{thm}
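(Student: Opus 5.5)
The plan is to begin with Theorem \ref{thm: NS}: it gives a one-parameter deformation $f:\cX\to\De$ of $X$ whose nearby fibres $X_t$ have only ordinary double points. Because flat deformations of $X$ are unobstructed in the Calabi--Yau setting (the $T^1$-lifting argument of Kawamata--Ran--Tian--Todorov, extended to canonical $3$-folds with hypersurface singularities by Namikawa), the semi-universal deformation space $\Def(X)$ is smooth. The family $f$ therefore factors through $\Def(X)$, and $\Def(X_t)$ is smooth for $t$ near $0$. The task then reduces to showing that a nodal fibre $X_t$ can be smoothed further, and that this smoothing lies in the same connected germ, so that a general curve through $0$ in $\Def(X)$ has non-singular general fibres.

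The key steps are as follows. (1) Show that $\Q$-factoriality survives to the nearby fibres $X_t$. The approach is to express $\Q$-factoriality of $X$ topologically: it is equivalent to $H_4(X,\Q)\to H^2(X,\Q)$ being an isomorphism, or equivalently to the local class groups at the singular points mapping to zero rationally in global cohomology. A specialisation argument with the Milnor fibres then transfers this to $X_t$. Concretely, for a nodal $X_t$ with nodes $p_1,\dots,p_m$, $\Q$-factoriality says that the vanishing $3$-spheres $[S_i]\in H_3$ of a small resolution satisfy the relations predicted by Friedman. (2) Apply Friedman's criterion, in the form used by Namikawa--Steenbrink via the local-to-global map $T^1_{X_t}\to H^0(T^1_{X_t})=\bigoplus_i\C_{p_i}$. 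A nodal Calabi--Yau threefold is smoothable exactly when this map hits a vector whose entries are all nonzero. Using $\om_{X_t}\cong\O$, the cokernel injects into $H^2(X_t,\Theta_{X_t})$, which equals the dual of a space controlled by $H^1(\Om^2)$ and local cohomology at the nodes. The map is dual to the map sending the exceptional curve classes to $H_2$ of a small resolution. Under $\Q$-factoriality no nontrivial relation among these classes can make the image miss the all-nonzero locus; in fact the map is surjective. (3) Conclude that the image of $\Def(X_t)\to\prod_i\Def(p_i)$ is a smooth germ meeting the open set where every node is smoothed. A general direction in $\Def(X_t)$, and hence in $\Def(X)$ by openness, therefore gives non-singular fibres. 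Choosing a generic disc through $0$ in $\Def(X)$ produces the required $f:\cX\to\De$.

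The main obstacle is step (1) together with the duality in step (2): identifying the cokernel of $T^1_{X_t}\to H^0(T^1)$ with relations among exceptional curves requires a mixed-Hodge-theoretic comparison of $H^3$ and $H^4$ of $X$, of $X_t$, and of the Milnor fibres. For step (1) I would first use semicontinuity of $\mathrm{rk}\,H^4$ versus $H^2$ in the family together with the invariant-cycle theorem, arguing that a Weil divisor on $X_t$ which is not $\Q$-Cartier would specialise to one on $X$ that is not $\Q$-Cartier. For hypersurface singularities I would replace $\Q$-factoriality by the vanishing of the defect $\sigma(X)=\mathrm{rk}\,H_4-\mathrm{rk}\,H^2$.
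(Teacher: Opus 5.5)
Your outline is sound, but it takes a different route from the one this paper has in mind. The paper quotes Theorem \ref{thm: NS2} from \cite{NS} without proof. Within its own framework the statement is the special case of Corollary \ref{cor: CY2} in which projectivity gives K\"ahler, Theorem \ref{4} turns $\Q$-factoriality into \eqref{top}, and isolated hypersurface singularities satisfy \eqref{strong sm}.

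That argument never passes through nodal fibres. Each $\Def(X,x)$ is stratified so that every stratum carries a good simultaneous resolution (Theorem \ref{thm: strat}), and one works over the generic strata met by $\Def(X)$. There Theorem \ref{thm: surj tang} makes the connecting map $H^1(X^\reg,\Om^2_{X^\reg})\to H^2_E(Y,\Om^2_Y)$ vanish. On the other hand, Lemmas \ref{lem: conn2} and \ref{lem: inj} (using \eqref{top}) factor this map as a surjection onto $H^2_E(Y,\Om^2_Y(\log E)(-E))$ followed by an injection. Hence $b^{11}=l^{11}=0$, and rigidity \cite[Theorem 1.1]{NS} together with local smoothability gives smooth fibres.

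You instead reduce to nodes via Theorem \ref{thm: NS}, propagate $\Q$-factoriality through the defect, and then apply Friedman's relation criterion together with unobstructedness and openness of versality. Your route is more classical and concrete, but it is tied to $n=3$, to hypersurface singularities (Milnor-fibre topology, reduction to nodes) and to smoothness of $\Def(X)$. The paper's method tolerates a singular Kuranishi space, arbitrary isolated log-canonical singularities and all $n\ge3$. Its price is that it yields only $b^{1,n-2}=l^{1,n-2}=0$ without extra local input.

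Three points in your sketch should be tightened.

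First, in step (1) the local pieces $X_t\cap B_x$ are nodal fibres, not Milnor fibres. They are nevertheless still homotopy equivalent to bouquets of $3$-spheres, and the links are simply connected. Mayer--Vietoris therefore gives $b_2(X_t)=b_2(X)$ and $b_4(X_t)\le b_4(X)$, hence $\sigma(X_t)\le\sigma(X)=0$. You do not need $\Q$-factoriality of $X_t$ itself, only $\sigma(X_t)=0$. For a nodal fibre this says that the exceptional curves $C_i$ of a small resolution $\hat X_t$ satisfy $[C_i]=0$ in $H_2(\hat X_t,\Q)$. It is these curve classes that Friedman's criterion uses, not vanishing $3$-spheres.

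Second, for $X$ itself, the equivalence of $\Q$-factoriality with $b_4=b_2$ is supplied by Theorem \ref{4} and the remark after Proposition \ref{prop: 41}. The precise form is that $H^2(X,\Q)\to H_4(X,\Q)$ is an isomorphism, not the reverse arrow you wrote.

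Third, Friedman's criterion needs Hodge theory on $\hat X_t$. This is available because $\hat X_t$ is Moishezon: $X_t$ stays projective, since $H^2(X,\O_X)=0$.
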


Theorem \ref{thm: NS} has recently been generalized as follows.
\begin{thm}[Tenie {\cite[Theorem 1.3]{Ten}}]\label{thm: Ten1}
Let $X$ be a compact log-canonical complex $n$-fold, $n\ge3,$ with $\om_X\cong\O_X$ and $H^1(X,\O_X)=0,$ whose singularities are isolated complete intersection singularities and such that
\begin{equation}\label{ddbar}
\text{if $Y\to X$ is a resolution of singularities then the $\partial\db$ lemma should hold for $Y.$}
\end{equation}
Then there exist an open neighbourhood $\De$ of $0\in\C$ and a proper flat morphism $f:\cX\to S$ with $f^{-1}(0)\cong X$ and such that the singularities of $f^{-1}(t), t\in\De\-\{0\},$ are $1$-Du Bois. The singularities of $X$ which are not $1$-Du Bois become non-singular under this. \qed
\end{thm}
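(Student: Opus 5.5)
The plan follows Namikawa--Steenbrink and Friedman--Laza: show that the deformation functor of $X$ is unobstructed, and that the first-order deformations of $X$ surject onto enough of the local first-order deformations at the non-$1$-Du Bois points. A generic one-parameter smoothing direction then does the job.

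\emph{Unobstructedness.} Since $X$ has isolated lci singularities, the relevant tangent--obstruction theory is $T^1_X=\Ext^1(\Om^1_X,\O_X)$ and $T^2_X=\Ext^2(\Om^1_X,\O_X)$. Because $\om_X\cong\O_X$, contraction with a generator of $\om_X$ identifies $\Ext^i(\Om^1_X,\O_X)$ with a hypercohomology group of $\Om^{n-1}_X$, or of a suitable modification of it. One applies the $T^1$-lifting criterion of Ran--Kawamata. For this one needs, for every infinitesimal deformation $X_k\to\spec A_k$, that $T^1(X_k/A_k)$ is free over $A_k$ and commutes with base change. I would deduce this by comparing with the Du Bois complex $\underline\Om^{n-1}$ of a log resolution $Y\to X$. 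Log-canonical singularities are Du Bois, and the hypothesis \eqref{ddbar} gives $E_1$-degeneration of the relevant Hodge-to-de Rham spectral sequences on $Y$ and on the exceptional divisor. These give local constancy of the Hodge numbers $h^{q}(\underline\Om^{n-1})$ in families, and hence the freeness.

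\emph{Local-to-global surjectivity.} Next, study the map $T^1_X\to H^0(X,T^1_X)=\bigoplus_x T^1_{X,x}$, or at least its composite to the quotient of $T^1_{X,x}$ detecting non-$1$-Du Bois behaviour. Its cokernel injects into $H^2(X,T^0_X)\cong H^2(X,\Om^{n-1}_X)$ through the local-to-global Ext spectral sequence. Using $H^1(X,\O_X)=0$, Serre duality, and the $\partial\db$-lemma on $Y$, I would show the following. The image of the local classes in $H^2$ lies in the image of the boundary map of a local cohomology sequence $H^1(U,\Om^{n-1})\to H^2_{\sing}(X,\Om^{n-1})\to H^2(X,\Om^{n-1})$, where $U=X\-X_\sing$. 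Mixed Hodge theory of the link then forces the component of degree $(1,n-2)$ to vanish for generic deformations. This step is the analogue of Namikawa--Steenbrink's use of the invariant $\sigma$, and it is exactly where the Du Bois invariant $b^{1,n-2}$ of the singularity enters. A singularity with $b^{1,n-2}\neq0$ contributes a local first-order deformation that is \emph{not} obstructed globally. So the generic global direction strictly decreases that invariant.

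\emph{Conclusion.} Take a generic smooth curve through $0$ in the smooth semi-universal deformation space and iterate on dimension of the miniversal base, or use openness of $1$-Du Bois. The generic fibre then has only $1$-Du Bois singularities. For an isolated complete intersection singularity, being $1$-Du Bois is equivalent to $b^{1,n-2}=0$ together with a bound on the log canonical threshold. A point that is not $1$-Du Bois gets smoothed, because its Milnor-fibre direction is hit. The main obstacle is the local-to-global surjectivity step: precisely controlling the kernel of $H^2_{\sing}(\Om^{n-1})\to H^2(X,\Om^{n-1})$ via Hodge theory. Here \eqref{ddbar} must be used carefully, since $X$ is only Kähler, not projective.
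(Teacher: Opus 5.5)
Your skeleton matches the argument the paper sketches for Tenie's theorem: unobstructed $\Def(X)$, a global-to-local surjection onto the part of $T^1_{X,x}$ measured by $b^{1,n-2}$, then a generic one-parameter smoothing. But both load-bearing steps are missing.

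\textbf{The surjectivity step.} What is needed is that $H^1(X^\reg,\Om^{n-1}_{X^\reg})\to\bigoplus_x H^2_{E_x}(Y,\Om^{n-1}_Y(\log E))$ is onto; by local duality the target has dimension $\sum_x b^{1,n-2}(X,x)$. This follows from the local cohomology sequence of $\Om^{n-1}_Y(\log E)$ on the resolution $Y$, once you know that $H^2_E(Y,\Om^{n-1}_Y(\log E))\to H^2(Y,\Om^{n-1}_Y(\log E))$ is zero. That vanishing is dual to the Popa--Shen--Vo vanishing of the edge map $H^{n-2}(Y,\Om^1_Y(\log E)(-E))\to H^0(X,R^{n-2}\pi_*\Om^1_Y(\log E)(-E))$ (Theorem~\ref{thm: PSV}, Corollary~\ref{cor: surj log}), and it is where the $\partial\db$-lemma is used. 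You never isolate it, and your route does not supply it:
\begin{itemize}
\item Routing the cokernel into $H^2(X,\Th_X)$ on $X$ gives no Hodge-theoretic handle. That obstruction is genuinely nonzero in general; for nodal Calabi--Yau threefolds, $T^1_X\to\bigoplus_xT^1_{X,x}$ is usually not onto.
\item Your sentence about local classes lying in the image of the boundary map only restates what must be proved.
\item ``The $(1,n-2)$-component vanishes for generic deformations'' is the wrong kind of statement. The vanishing is a property of $X$ itself and needs no genericity.
\item $H^1(X,\O_X)=0$ plays no role in this step.
\end{itemize}
Two smaller points. With $\om_X\cong\O_X$ one has $\Th_X\cong j_*\Om^{n-1}_{X^\reg}$, not the K\"ahler $\Om^{n-1}_X$. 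Your unobstructedness sketch conflates $T^1_X\cong H^1(X^\reg,\Om^{n-1}_{X^\reg})$ with $\mathbb H^1(X,\ul\Om^{n-1}_X)$. These differ by local terms, and controlling those terms over Artin bases is the real content of that theorem. You can cite it, as the paper does, but your argument does not prove it.

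\textbf{The smoothing step.} You assert that ``a point that is not $1$-Du Bois gets smoothed, because its Milnor-fibre direction is hit.'' Hitting a nonzero class of $H^2_{E_x}(Y,\Om^{n-1}_Y(\log E))$ to first order says nothing by itself about the singularities of nearby fibres. The paper stresses that for non-lci singularities no such conclusion is available. You need a local input specific to complete intersections: for example, that the kernel of the $\O_{X,x}$-linear surjection $T^1_{X,x}\to H^2_{E_x}(Y,\Om^{n-1}_Y(\log E))$ lies in $\mathfrak m_xT^1_{X,x}$. Then a generic element of the image of $T^1_X$ is a first-order smoothing at every non-$1$-Du Bois point. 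For a hypersurface this follows from Nakayama, since $T^1_{X,x}/\mathfrak m_xT^1_{X,x}\cong\C$. In general it is the Friedman--Laza local analysis on which Tenie relies; this is the ``tangent vector which has the effect of smoothing the germ'' in the paper's sketch. Unobstructedness then gives a one-parameter deformation smoothing those points, and openness of the $1$-Du Bois property for lci singularities handles the remaining points.

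Your fallback, ``the generic direction strictly decreases the invariant; iterate,'' does not work as stated. Nothing you prove makes $b^{1,n-2}$ upper semicontinuous along an arbitrary curve in $\Def(X)$; the paper needs simultaneous resolutions over strata to get that. Once the smoothing lemma is in place, no iteration is needed.
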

If $X$ is as in Theorem \ref{thm: NS} then by Kawamata \cite[Theorem 8.3]{Kaw1} we have $H^1(X,\O_X)=0$ unless $X$ is non-singular. Also \eq{ddbar} holds because $X$ is projective. On the other hand, by Chen, Dirks and Musta\c{t}\v{a} \cite[(2.9) and Theorem 2.3]{CDM} isolated $1$-Du Bois complete intersection singularities of dimension $n=3$ are hypersurface singularities, which are by Namikawa--Steenbrink \cite[Theorem 2.2]{NS} ordinary double points. Theorem \ref{thm: Ten1} thus generalizes Theorem \ref{thm: NS}. The following theorem is proved in the same paper \cite{Ten}.

\begin{thm}[Tenie {\cite[Theorem 1.4]{Ten}}]\label{thm: Ten2}
Let $X$ be as in Theorem \ref{thm: Ten1}. Suppose that its Du Bois complex $\ul\Om^\bullet_X$ satisfies the equality
\begin{equation}\label{DB complex}
\dim_\C \mathbb{H}^{n-1}(X,\ul\Om^2_X)\le\dim_\C \mathbb{H}^1(X,\ul\Om^{n-2}_X).
\end{equation}
Then there exist an open neighbourhood $\De$ of $0\in\C$ and a proper flat morphism $f:\cX\to\De$ with $f^{-1}(0)\cong X$ and such that the singularities of $f^{-1}(t), t\in\De\-\{0\},$ are $1$-rational. The singularities of $X$ which are not $1$-rational become non-singular under this.
\end{thm}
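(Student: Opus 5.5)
The approach is Namikawa--Steenbrink/Tenie style: show that the Kuranishi space $\Def(X)$ is smooth, and that the map to local deformations is surjective enough to smooth out the non-$1$-rational part. Then go to a generic one-parameter curve in $\Def(X)$.

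First, $\Def(X)$ is unobstructed. Since $\om_X\cong\O_X$, we have $T_X\cong\Om^{[n-1]}_X$. By the $T^1$-lifting criterion (Ran--Kawamata), it suffices that $\Ext^1(\Om^1_{X_m},\O_{X_m})$ is locally free and compatible with base change over $A_m=\C[t]/t^{m+1}$. This reduces to degeneration of the relevant Hodge-type spectral sequence for $\ul\Om^{n-1}$ on the fibres. That degeneration follows from the $\partial\db$-lemma on a resolution (hypothesis \eq{ddbar}) together with the log-canonical (hence Du Bois) hypothesis. This step is exactly as in the proof of Theorem \ref{thm: Ten1}, which I take as given.

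Second, analyse the local-to-global map. Write $\Sigma=X_{\sing}$. The local-to-global sequence gives
\[
H^1(X,T_X)\to \Ext^1(\Om^1_X,\O_X)\to \bigoplus_{x\in\Sigma}T^1_{X,x}\to H^2(X,T_X).
\]
For isolated lci singularities, the local obstruction to smoothing the non-$1$-rational part is measured by a local cohomology invariant, say $H^{2}_{\Sigma}(\ul\Om^{n-2})$, or dually the link invariant $l^{1,n-2}$. The goal is to show that the image of $\Ext^1(\Om^1_X,\O_X)\to\bigoplus_x T^1_{X,x}$ composed with the natural map $\bigoplus_x T^1_{X,x}\to \bigoplus_x H^{2}_{x}(\ul\Om^{n-2}_X)$ is surjective. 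Note that $H^{2}_x(\ul\Om^{n-2}_X)$ is the obstruction space for $1$-rationality. By the global local-cohomology sequence
\[
\mathbb{H}^1(X,\ul\Om^{n-2}_X)\to \mathbb{H}^1(X\setminus\Sigma,\Om^{n-2})\to H^2_\Sigma(\ul\Om^{n-2}_X)\to \mathbb{H}^2(X,\ul\Om^{n-2}_X),
\]
surjectivity onto $H^2_\Sigma$ holds iff the last map vanishes. Dualising via Serre/Grothendieck duality for Du Bois complexes, the vanishing amounts to a comparison $\dim\mathbb{H}^{n-1}(X,\ul\Om^2_X)\le\dim\mathbb{H}^1(X,\ul\Om^{n-2}_X)$ of the Hodge-type numbers. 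This is precisely where \eq{DB complex} enters. The inequality opposite to \eq{DB complex} always holds via the mixed Hodge structure on $H^{n-1}(X)$ and Hodge symmetry on the resolution ($\partial\db$-lemma). So \eq{DB complex} forces equality, and hence the injectivity $\mathbb{H}^1(X,\ul\Om^{n-2})\to H^{n-1}$-graded piece and the vanishing of the connecting map.

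Third, deduce the statement. Pick a generic first-order deformation $\xi\in\Ext^1(\Om^1_X,\O_X)$ whose local images at each non-$1$-rational singular point are nonzero in the obstruction quotient. By unobstructedness, extend $\xi$ to $f:\cX\to\De$. By the local theory of isolated lci singularities (as in Tenie's argument for Theorem \ref{thm: Ten1}), a deformation whose first-order class is nonzero in that quotient smooths the point, or at least makes the nearby fibre $1$-rational. Semicontinuity of the local invariants, and the openness of $1$-rationality in flat families of lci singularities, give that all singularities of $f^{-1}(t)$, $t\ne0$, are $1$-rational. Singular points that are not $1$-rational get smoothed, because their whole Milnor-fibre contribution is killed.

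The main obstacle is the second step: identifying the local $1$-rationality obstruction with $H^2_\Sigma(\ul\Om^{n-2})$, and proving that \eq{DB complex} exactly yields the vanishing of $H^2_\Sigma(\ul\Om^{n-2})\to\mathbb{H}^2(X,\ul\Om^{n-2}_X)$. To do this I would compute via the resolution $Y\to X$ with exceptional divisor $E$ and the log complexes $\Om^\bullet_Y(\log E)(-E)$, then use the $\partial\db$-lemma to get the required Hodge symmetry.
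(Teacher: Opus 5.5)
Note first that the paper does not prove this statement; it quotes it from Tenie. Its introduction does describe the argument, though: $\Def(X)$ is smooth, the points with $b^{1,n-2}+l^{1,n-2}\ne0$ have local smoothing directions, and \eqref{DB complex} is the obstruction to realising those directions globally. Your three-step outline has this shape. The gap is in step~2, which carries all the content, because you run it on the wrong sheaf. First-order deformations are $T_o\Def(X)=\Ext^1(\Om_X,\O_X)\cong H^1(X^\reg,\Th_{X^\reg})\cong H^1(X^\reg,\Om^{n-1}_{X^\reg})$; you note $T_X\cong\Om^{[n-1]}_X$ yourself. The local-cohomology sequence you need is therefore the one for $\ul\Om^{n-1}_X$:
\[H^1(X^\reg,\Om^{n-1}_{X^\reg})\to H^2_E(Y,\Om^{n-1}_Y(\log E)(-E))\to H^2(Y,\Om^{n-1}_Y(\log E)(-E)).\]
The local space is $H^2_E(Y,\Om^{n-1}_Y(\log E)(-E))$. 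By Corollary~\ref{cor: 1n-2} it has dimension $b^{1,n-2}+l^{1,n-2}$, not $l^{1,n-2}$ alone, and for an ICIS it vanishes exactly at the $1$-rational points. Your $H^2_\Si(\ul\Om^{n-2}_X)$ is locally dual to $H^{n-2}(Y,\Om^2_Y(\log E))$, which is a level-$2$ invariant unrelated to $1$-rationality. Likewise $H^1(X^\reg,\Om^{n-2}_{X^\reg})$ has nothing to do with $T^1$. So your sequence says nothing about the image of $T_o\Def(X)$ in the local spaces. The $\ul\Om^{n-2}$ appearing in \eqref{DB complex} is not the sheaf to run the argument on.

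Even with $\cF:=\Om^{n-1}_Y(\log E)$ in place, you still need the vanishing of $H^2_E(Y,\cF(-E))\to H^2(Y,\cF(-E))$. This does not come from ``dualising to a dimension comparison''; it needs two specific inputs, and your sketch has neither.

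\emph{First input: the Popa--Shen--Vo vanishing} (Corollary~\ref{cor: surj log}), which says $H^2_E(Y,\cF)\to H^2(Y,\cF)$ is zero. This handles the $b^{1,n-2}$ part with no hypothesis. It also shows that the image of $H^2_E(Y,\cF(-E))$ lies in $\ker\bigl(H^2(Y,\cF(-E))\to H^2(Y,\cF)\bigr)$. That kernel is the image of $H^1(E,\cF\otimes\O_E)=\gr^{n-1}_FH^n(L,\C)$, i.e.\ the $l^{1,n-2}$ part.

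\emph{Second input: injectivity of $H^2(Y,\cF(-E))\to H^2(Y,\cF)$.} Surjectivity of this map is your ``opposite inequality''. It holds because the cokernel lies in $H^2(E,\cF\otimes\O_E)=\gr^{n-1}_FH^{n+1}(L,\C)$, and this is $0$ since the link of an ICIS is $(n-2)$-connected. So this is where the complete-intersection hypothesis enters, not Hodge symmetry on $Y$; it is also why Proposition~\ref{prop: 45} needs complete intersections. Then \eqref{DB complex}, in the form \eqref{h5}, makes the map an isomorphism. Hence $H^2_E(Y,\cF(-E))\to H^2(Y,\cF(-E))$ vanishes, so $H^1(X^\reg,\Om^{n-1}_{X^\reg})\to H^2_E(Y,\cF(-E))$ is onto. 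Equivalently, this follows from the chase on \eqref{oct} as in Lemma~\ref{lem: conn2}.

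Step~3 must then use the local fact that a class with nonzero image in \emph{this} space is a first-order smoothing of the germ. Your hedge ``or at least makes the nearby fibre $1$-rational'' would not give the last sentence of the theorem.
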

\begin{rmk}\label{rmk: Ten2}
Since the singularities of $X$ are complete intersections it follows readily that the inequality \eq{DB complex} is equivalent to the equality $\dim_\C \mathbb{H}^{n-1}(X,\ul\Om^2_X)=\dim_\C \mathbb{H}^1(X,\ul\Om^{n-2}_X);$ see Proposition \ref{prop: 45}.
\end{rmk}
If $X$ is as in Theorem \ref{thm: NS2} then its singularities are rational and by Park--Popa \cite[Theorem E]{PP} it is $\Q$-factorial. (More precisely, if $X$ is a projective $3$-fold with rational singularities then \eq{DB complex} with $n=3$ is necessary and sufficient for $X$ to be $\Q$-factorial.) On the other hand, by Steenrbink \cite[Theorem 6]{St97} and Friedman--Laza \cite[Theorems 5.2 and  5.3]{FL iso} isolated $1$-rational complete intersection singularities of dimension $n=3$ have $s_2=b^{11}+l^{11}=0$ which are therefore smooth. Theorem \ref{thm: Ten2} thus generalizes Theorem \ref{thm: NS2}.

In Theorems \ref{thm: NS}--\ref{thm: Ten2} the singularities of $X$ are complete intersection singularities. There is however a known generalization of Theorem \ref{thm: NS2} to the other singularities, which we state now. By a {\it Kuranishi space} we mean the base space of a semi-universal deformation of the germ of a singularity (or a compact complex space).
\begin{thm}[Namikawa {\cite[Theorem 5]{Nam97}}]\label{thm: Nam97}
Let $X$ be a $\Q$-factorical projective canonical $3$-fold with $\om_X\cong\O_X$ and with isolated singularities. Suppose that for every singular point $x\in X$ the germ $(X,x)$ admits a smooth Kuranishi space with at least one smoothing component. Then $X$ is globally smoothable. \qed
\end{thm}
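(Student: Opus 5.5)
This is Namikawa's theorem; the plan follows the standard $T^1$-lifting argument combined with local-to-global surjectivity.

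\emph{Unobstructedness of $\Def(X)$.} By the Ran--Kawamata $T^1$-lifting principle it suffices to show that $T^1_{X_m/A_m}$ is locally free over $A_m=\C[t]/(t^{m+1})$ for every infinitesimal deformation $X_m$. Since $\om_X\cong\O_X$ and the singularities are isolated and canonical (hence rational, depth $\ge3$), we have $T_X\cong\Om^{[2]}_X$ on the smooth locus $U=X\-\sing X$. Thus $\Ext^1(\Om^1_X,\O_X)$ is computed by cohomology on $U$ of reflexive differentials. Namikawa's argument then shows that these groups deform with constant dimension, using Hodge theory on a resolution and the vanishing of local cohomology in low degrees at the isolated canonical points. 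So $\Def(X)$ is smooth.

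\emph{Local-to-global surjectivity.} Consider the natural map
\[
\Def(X)\to\prod_{x\in\sing X}\Def(X,x).
\]
Its tangent map is $T^1_X\to H^0(X,\mathcal{T}^1_X)$, with obstruction in $H^2(X,T_X)$. I would show that the image hits a general direction in each local smoothing component. This uses $\Q$-factoriality: by the local-cohomology sequence and Hodge theory, $\Q$-factoriality implies that the map $H^0(\mathcal T^1_X)\to H^2(X,T_X)$ kills the first-order smoothing directions. Equivalently, the obstruction to lifting local smoothings lies in $H^2_{\sing X}$ of $\Om^2$ paired against local Weil divisor classes, and these vanish modulo Cartier divisors. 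Since each $\Def(X,x)$ is smooth, a first-order surjectivity onto a dense subset of the tangent space of each local smoothing component, together with the smoothness of $\Def(X)$, gives a one-parameter deformation whose image in each $\Def(X,x)$ is a general smoothing arc.

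\emph{Conclusion.} Along this arc every singular point is smoothed, so the general fibre is smooth, and $X$ is globally smoothable.

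The main obstacle is the second step: relating $\Q$-factoriality to the vanishing of the obstruction map $H^0(\mathcal T^1_X)\to H^2(X,T_X)$ on smoothing directions. I would attempt this via the duality $H^2(X,T_X)\cong H^1(X,\Om^{[1]}_X)^\vee$ (which uses $\om_X\cong\O_X$) and the identification of the cokernel with local divisor class groups.
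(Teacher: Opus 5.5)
The gap is in your second step, which is the heart of the theorem. What you need there is in effect surjectivity of $T^1_X\to\bigoplus_x T^1_{X,x}$. The image of a linear map is a subspace, so ``onto a dense subset of the tangent space'' of the smooth germ $\Def(X,x)$ means onto. $\Q$-factoriality does not give this.

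To see why, write $U=X^\reg$, let $\tilde V\subset Y$ be the preimage of small Stein neighbourhoods of the singular points, and use $T^1_X\cong H^1(U,\Om^2_U)$ and $T^1_{X,x}\cong H^1(U_x,\Om^2)$. Mayer--Vietoris for $Y=U\cup\tilde V$ shows that the cokernel is an extension of two pieces:
\begin{itemize}
\item the image of $\bigoplus_xT^1_{X,x}\to H^2_E(Y,\Om^2_Y)\to H^2(Y,\Om^2_Y)$, by
\item the image of $H^1(\tilde V,\Om^2_{\tilde V})$ in $\bigoplus_xT^1_{X,x}$, modulo the image of $H^1(Y,\Om^2_Y)$.
\end{itemize}
Only the first piece is a divisor-class phenomenon. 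It is governed by restricting global $(1,1)$-classes to $\tilde V$, and this is where $\Q$-factoriality enters. For ordinary double points the second piece vanishes, which is why Friedman's criterion reduces the nodal case to classes of exceptional curves.

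The second piece lies in $\ker(T^1_{X,x}\to H^2_E(Y,\Om^2_Y))$. That kernel contains, for instance, every direction lifting to a locally trivial deformation of $(\tilde V,E)$. $\Q$-factoriality says nothing about whether global $(2,1)$-classes reach it. Surjectivity would, for example, force $\dim T^1_X\ge\sum_x\dim T^1_{X,x}$.

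What Hodge theory plus $\Q$-factoriality actually gives is much weaker. By Theorem~\ref{4}, $\Q$-factoriality is equivalent to \eqref{top}. Lemmas~\ref{lem: conn2} and \ref{lem: inj} then say that the image of $H^1(U,\Om^2_U)\to H^2_E(Y,\Om^2_Y)$ is a copy of $H^2_E(Y,\Om^2_Y(\log E)(-E))$, of dimension $\sum_x(b^{11}+l^{11})(X,x)$. In words, global first-order deformations are not all tangent to the locus admitting a simultaneous resolution.

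Even a genuine first-order transversality would not by itself give a smoothing arc. Whether an arc leaves the discriminant in $\Def(X,x)$ depends on its position relative to the discriminant's tangent cone, and on higher-order terms if the image of $T^1_X$ lies in that cone. You do not control this for non-hypersurface singularities.

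The missing idea is to avoid any first-order statement at the origin.
\begin{itemize}
\item Stratify each $\Def(X,x)$ into finitely many strata over which the local family has a simultaneous good resolution (Theorem~\ref{thm: strat}).
\item Pass to a generic point $t\in\Def(X)$; this is equivalent to Namikawa's induction on strata. Near $t$ the semi-universal family of $\cX_t$ itself admits a simultaneous good resolution.
\item Theorem~\ref{thm: surj tang} then forces the connecting map $H^1(\cX_t^\reg,\Om^2)\to H^2_E(Y_t,\Om^2_{Y_t})$ to vanish.
\item The hypotheses (K\"ahler, log-canonical, $\om$ trivial, \eqref{top}) are open in the family, so the surjectivity above still holds on $\cX_t$. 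Combined with the previous step, this gives $b^{11}=l^{11}=0$ at every singular point of $\cX_t$ (Corollary~\ref{cor: key surj2}).
\item Namikawa--Steenbrink's rigidity theorem makes these points rigid.
\item Your hypothesis makes them smoothable: a smooth, hence irreducible, local Kuranishi germ with a smoothing component has smooth generic fibre, which is \eqref{strong sm}. Apply this together with openness of versality.
\item Rigid and smoothable means smooth, so $\cX_t$ is non-singular.
\end{itemize}
This is the route of Corollary~\ref{cor: CY2}, which the paper presents as a generalization of this statement.

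This route never needs $\Def(X)$ to be smooth, so your first step can be dropped. As written, that step does not use the smoothness of the local Kuranishi spaces. That hypothesis is exactly what the $T^1$-lifting argument needs, to make the relative local $T^1$'s free. For arbitrary isolated canonical singularities, unobstructedness of $\Def(X)$ is not available.
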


In the present paper we generalize the principal part of Tenie's theorems, Theorems \ref{thm: Ten1} and \ref{thm: Ten2}, using the method of proof of Theorem \ref{thm: Nam97}. The following is the generalization of Theorem \ref{thm: Ten1}.
\begin{thm}\label{thm: CY1}
Let $X$ be a compact K\"ahler log-canonical $n$-fold, $n\ge3,$ with $\om_X\cong\O_X$ and with isolated singularities. Then there exist a semi-universal deformation $f:\cX\to\Def(X)$ and a Zariski dense open subset $\De\sb \Def(X)$ such that for every $t\in\De$ the singularities of $f^{-1}(t)$ have $b^{1,n-2}=0.$ 
\end{thm}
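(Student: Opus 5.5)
We follow Namikawa's method from Theorem \ref{thm: Nam97}: compare the global deformation space $\Def(X)$ with the product of the local Kuranishi spaces $\prod_{x\in\sing X}\Def(X,x)$, and show that first order deformations of $X$ surject onto the part of $\prod_x T^1_{X,x}$ that controls the invariant $b^{1,n-2}$. The first step is unobstructedness. Since $\om_X\cong\O_X$, $X$ is log-canonical with isolated singularities, and $X$ is compact K\"ahler, we would run a $T^1$-lifting argument (Kawamata--Ran type). The required degeneration is Hodge-theoretic: the maps $\mathbb{H}^k(X,\ul\Om^p_X)$ should behave well in families because log-canonical singularities are Du Bois, so $\O_X\to\ul\Om^0_X$ is a quasi-isomorphism and the Hodge spectral sequence degenerates for the K\"ahler space. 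This would make $\Def(X)$ smooth. Once $\Def(X)$ is irreducible, ``Zariski dense open'' simply means the complement of a proper analytic subset.

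The second step is a local description of $b^{1,n-2}$ at an isolated singularity $(X,x)$. Take a good resolution $\pi:Y\to X$ with exceptional divisor $E$ over $x$. Then $b^{1,n-2}=\dim H^{n-2}(E,\Om^1_E/\text{torsion})$-type Du Bois data, equivalently $\dim (R^{n-2}\pi_*\ul\Om^1)_x$ for $n\ge3$. We would show that a first order deformation $\eta\in T^1_{X,x}$ acts on the local cohomology and link mixed Hodge structure via contraction with the Kodaira--Spencer class. We would also show that if $b^{1,n-2}\ne0$, there is a local ``Namikawa--Steenbrink pairing'' $T^1_{X,x}\to \mathrm{Hom}(H^{n-2}\text{-part},H^{n-1}\text{-part})$ whose nonvanishing along $\eta$ lowers $b^{1,n-2}$ under the one-parameter deformation. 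By semicontinuity of these Du Bois invariants in flat families with isolated singularities, the locus in $\Def(X)$ where some singular fibre has $b^{1,n-2}>0$ is closed analytic. So it suffices to show this locus is proper near $0$, and by induction on $\sum_x b^{1,n-2}$, it suffices to find one deformation that decreases the invariant.

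The third step, which I expect to be the main obstacle, is globalization: we must show that the image of $T^1_X=\Ext^1(\Om^1_X,\O_X)\to H^0(X,\mathcal{T}^1_X)=\bigoplus_x T^1_{X,x}$ contains an element with nonzero pairing at every $x$ where $b^{1,n-2}\ne0$. The local-to-global sequence puts the obstruction in $H^2(X,\Theta_X)$. Using $\om_X\cong\O_X$ we identify $\Theta_X$ with the reflexive $\Om^{[n-1]}_X$, and then compare with $\mathbb{H}^2(X,\ul\Om^{n-1}_X)$ and its local cohomology at the singular points. What we would try first is a Hodge-theoretic computation. The composite of $T^1_X\to\bigoplus_x T^1_{X,x}$ with the local pairing should equal the restriction map in the exact sequence $H^{n-1}(Y,\Om^1_Y(\log E))\to \bigoplus_x H^{n-1}_E$-terms, dualized via Serre duality to $H^{1}$ of $\Om^{n-1}$. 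The cokernel of this restriction lands in the next term $H^{n}$, and the relevant graded piece should vanish by strictness of morphisms of mixed Hodge structures on the K\"ahler resolution $Y$. Here the K\"ahler hypothesis replaces \eqref{ddbar}. The weight argument must show that the pieces contributing to $b^{1,n-2}$ (the part of weight not captured by $l^{1,n-2}$) map injectively. This is why only $b^{1,n-2}$, and not $l^{1,n-2}$, is controlled without further topological hypotheses. Granting this surjectivity onto the relevant quotient, a generic tangent direction decreases $\sum_x b^{1,n-2}$. Iterating, or equivalently taking the generic point of each stratum, gives the dense open $\De$.
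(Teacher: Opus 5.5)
There is a genuine gap in Steps 1 and 2, and these carry your whole argument.

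\textbf{Step 1.} You assert that $\Def(X)$ is smooth, but $T^1$-lifting gives this only when the relative $T^1$ over Artinian bases is controlled. That is known in special settings (hypersurface, lci, terminal), not for arbitrary isolated log-canonical singularities. Their local Kuranishi spaces can be reducible; the cone over the sextic del Pezzo surface is an example. The paper deliberately allows $\Def(X)$ to be singular. Without smoothness, your ``generic tangent direction'' need not be tangent to any actual deformation.

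\textbf{Step 2.} The local ``pairing'', whose nonvanishing along $\eta$ is supposed to make $b^{1,n-2}$ drop, is not an available result. For non-lci singularities nothing links a first-order direction in $T^1_{X,x}$ to a drop of $b^{1,n-2}$ on actual nearby fibres; the germs need not even be smoothable. Upper semicontinuity of $b^{1,n-2}$ in arbitrary flat families is also unjustified; one has it only over bases carrying a simultaneous resolution.

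\textbf{Openness of the hypotheses.} Your iteration applies the argument to nearby fibres, so it needs the hypotheses to be open: K\"ahler, log-canonical over a general base, and $\om\cong\O$. You do not address this. It is the real r\^ole of the K\"ahler assumption (Corollary \ref{cor: Kahler}), not a replacement for the $\partial\bar\partial$-lemma in the Hodge theory.

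\textbf{The missing idea.} The paper avoids both unobstructedness and any local jumping statement by using simultaneous resolutions.
First, stratify each $\Def(X,x)$ into smooth locally closed strata over which the family has a good simultaneous resolution (Theorem \ref{thm: strat}). Take $\De$ to be the set of $t$ whose image lies, for every $x$, in the highest stratum met by the image of $\Def(X)$. Then over the germ $\Def(\cX_t)$ the family has a good simultaneous resolution $(\cY,\cE)$.
Next, by uniqueness of simultaneous contractions (Lemma \ref{lem: formal isom}, Theorem \ref{thm: isom def}), the map $H^1(Y,\Th_Y(-\log E))=T_o\Def(Y,E)\to T_o\Def(X)=H^1(X^\reg,\Om^{n-1}_{X^\reg})$ is surjective (Theorem \ref{thm: surj tang}). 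So the connecting map $H^1(X^\reg,\Om^{n-1}_{X^\reg})\to H^2_E(Y,\Th_Y(-\log E))$ vanishes.
Contraction with the log-canonical volume form sends $\Th_Y(-\log E)$ into $\Om^{n-1}_Y$. Hence the connecting map to $H^2_E(Y,\Om^{n-1}_Y(\log E))$ also vanishes.
The Popa--Shen--Vo vanishing (Corollary \ref{cor: surj log}) makes that same map surjective. Therefore $H^2_E(Y,\Om^{n-1}_Y(\log E))=0$, and this group is dual to $H^{n-2}(Y,\Om^1_Y(\log E)(-E))$, so $b^{1,n-2}=0$.

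Your Step 3 Hodge computation is essentially this PSV input. What you lack is the mechanism that turns it into a statement about the actual fibres $\cX_t$ without smoothness of $\Def(X)$ or a local pairing.
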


Being K\"ahler is stronger than \eq{ddbar}. It is however, as we prove in Corollary \ref{cor: Kahler}, an open condition under the deformations of $X.$ This is indispensable to us because we follow the proof of Theorem \ref{thm: Nam97}. In Theorem \ref{thm: Nam97}, unless $X$ is non-singular we have $H^1(X,\O_X)=0$ and so by Serre duality $H^2(X,\O_X)=0.$ Projective is therefore an open condition. In Theorem \ref{thm: CY1} K\"ahler plays the same r\^ole.

The condition $b^{1,n-2}=0$ is a weaker version of the $1$-Du Bois condition. They are equivalent for complete intersection singularities \cite[Theorem 5.2]{FL iso}. In Theorem \ref{thm: CY1} the singularities need not be even locally smoothable and it is too strong to claim that the singularities with $b^{1,n-2}\ne0$ disappear as in Theorem \ref{thm: Ten1}. In Theorem \ref{thm: CY1} they become singularities with $b^{1,n-2}=0.$

To prove Theorem \ref{thm: CY1} we show first that there exists a Zariski dense open subset $D\sb\Def(X)$ over which the deformation family admits a simultaneous resolution. Then $b^{1,n-2}$ is an upper semi-continuous function on $D.$ The heart of Theorem \ref{thm: CY1} is that the set of $b^{1,n-2}=0$ is dense.  

The generalization of Theorem \ref{thm: Ten2} is as follows. 
\begin{thm}\label{thm: CY2}
Let $X$ be as in Theorem \ref{thm: CY1}. Denote by $X^\reg\sb X$ the regular locus and suppose that
\begin{equation}\label{top}
\text{the connecting homomorphism $H^n(X^\reg,\C)\to H^{n+1}(X,X^\reg;\C)$ is surjective.}
\end{equation}
Then there exist a semi-universal deformation $f:\cX\to\Def(X)$ and a Zariski dense open subset $\De\sb \Def(X)$ such that for every $t\in\De$ the singularities of $f^{-1}(t)$ have $b^{1,n-2}=l^{1,n-2}=0.$
\end{thm}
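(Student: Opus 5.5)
Theorem \ref{thm: CY1} already gives a Zariski dense open subset of $\Def(X)$ over which the fibres have $b^{1,n-2}=0$ at their singular points. So the plan is to intersect it with a second Zariski dense open subset on which $l^{1,n-2}=0$. Throughout I work over the dense open set $D\sb\Def(X)$ admitting a simultaneous resolution, constructed in the proof of Theorem \ref{thm: CY1}. Over $D$ the link invariants $l^{1,n-2}$ of the singular points vary upper semi-continuously, exactly as $b^{1,n-2}$ does. The link of an isolated singularity is a compact manifold whose cohomology carries a mixed Hodge structure, and over $D$ it is locally constant up to the Hodge filtration. Hence the locus where $l^{1,n-2}=0$ is Zariski open, and the whole task is to show it is nonempty, i.e.\ that a general first-order direction kills the relevant part of $l^{1,n-2}$.

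For the first-order analysis I follow the template of Namikawa and Theorem \ref{thm: CY1}. Since $\om_X\cong\O_X$, contraction with a holomorphic volume form identifies $\mathbb{T}^1_X$ with a cohomology group of degree-$(n-1)$ forms. The global-to-local map $\mathbb{T}^1_X\to H^0(X,T^1_X)=\bigoplus_x T^1_{X,x}$ composed with the local period-type maps lands in $\bigoplus_x$ of the local groups governing $b^{1,n-2}$ and $l^{1,n-2}$.

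1. In Theorem \ref{thm: CY1} one shows, via Du Bois complexes and Hodge theory of the K\"ahler resolution, that the image of $\mathbb{T}^1_X$ surjects onto the local $b^{1,n-2}$-part. Here I need the analogous surjectivity onto the $l^{1,n-2}$-part.

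2. The obstruction to that surjectivity is the cokernel of a restriction map from global cohomology to the local cohomology of the links. Using the exact sequence of the pair $(X,X^\reg)$, with $H^{n+1}(X,X^\reg)\cong\bigoplus_x H^{n}(L_x)$ by excision, this cokernel is controlled exactly by the connecting homomorphism in \eqref{top}. The link groups $H^{n}(L_x)$ carry the $l$-invariants; more precisely, $l^{1,n-2}$ sits as a Hodge piece of $H^{n}(L_x)$ or its dual $H^{n-1}(L_x)$. Surjectivity in \eqref{top} means every link class comes from $H^n(X^\reg)$. Taking the $(1,n-2)$ or $(n-1,\cdot)$ Hodge component, via strictness of morphisms of mixed Hodge structures, then shows the first-order deformations hit every $l^{1,n-2}$ class.

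3. With this infinitesimal surjectivity in hand, I argue as for $b^{1,n-2}$. By unobstructedness, i.e.\ the $T^1$-lifting argument available for log-canonical K\"ahler Calabi--Yau (presumably already used in Theorem \ref{thm: CY1}), $\Def(X)$ is smooth. A general curve through $0$ in the direction of a generic tangent vector leaves the locus $l^{1,n-2}>0$. Since that locus is closed in $D$ and does not contain a neighbourhood of $0$, its complement is dense, and intersecting with the set from Theorem \ref{thm: CY1} gives $\De$.

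The main obstacle is step 2. The first part is identifying precisely which piece of $H^n(L_x)$ (or $H^{n-1}(L_x)$) computes $l^{1,n-2}$ under the Calabi--Yau identification of $T^1$. The second is checking that the Hodge-theoretic compatibility of the connecting map, which is a morphism of mixed Hodge structures, lets one pass from surjectivity over $\C$ to surjectivity on the specific graded piece. I would first try strictness of the weight and Hodge filtrations applied to the exact sequence of the pair. Then I would verify that the image lies in the same graded piece that the Kodaira--Spencer map reaches, using the K\"ahler $\partial\db$-lemma on the resolution.
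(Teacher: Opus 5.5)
Your proposal has a genuine gap. It lies in step~3, and step~2 cannot repair it.

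\textbf{Smoothness of $\Def(X)$.} Nothing in the hypotheses makes $\Def(X)$ smooth. The unobstructedness results available in the literature rely on local hypotheses, such as complete intersection singularities or smooth local Kuranishi spaces, which are absent here. The paper explicitly allows $\Def(X)$ to be singular, so a generic tangent vector need not be tangent to any curve in $\Def(X)$.

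\textbf{From first order to nearby fibres.} More seriously, even on a smooth base there is no principle turning ``the image of $\mathbb{T}^1_X$ hits every local $l^{1,n-2}$-class'' into ``$l^{1,n-2}$ drops on nearby fibres''. For non-lci singularities, a nonzero image in a local Hodge-theoretic group is not known to be a smoothing direction, or even a singularity-improving one. In addition, your first-order computation takes place at $X=\mathcal{X}_0$, while the closed locus $\{l^{1,n-2}>0\}$ lives inside $D$, which in general does not contain $0$. Information about $X$ tells you nothing about the singularities of the fibres over $D$. Theorem~\ref{thm: CY1} is not proved by pushing along a generic tangent direction either.

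\textbf{What the paper does instead.} The paper argues by contrapositive, applied at a generic fibre. Corollary~\ref{cor: key surj2} shows that if the semi-universal family of $X$ admits a good simultaneous resolution $(Y,E)\to(X,X^{\rm sing})$, then $b^{1,n-2}=l^{1,n-2}=0$. The steps are as follows.
\begin{itemize}
\item Theorem~\ref{thm: surj tang} gives surjectivity of $H^1(Y,\Theta_Y(-\log E))\to H^1(X^{\rm reg},\Omega^{n-1}_{X^{\rm reg}})$.
\item Contraction with the volume form lands in $\Omega^{n-1}_Y$ itself, not merely in the log forms. Hence the connecting map $H^1(X^{\rm reg},\Omega^{n-1}_{X^{\rm reg}})\to H^2_E(Y,\Omega^{n-1}_Y)$ vanishes.
\item Lemma~\ref{lem: inj} (a Friedman--Laza type argument) shows $H^2_E(Y,\Omega^{n-1}_Y(\log E)(-E))\to H^2_E(Y,\Omega^{n-1}_Y)$ is injective. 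So the connecting map into $H^2_E(Y,\Omega^{n-1}_Y(\log E)(-E))$ also vanishes.
\item Lemma~\ref{lem: conn2} shows that this same map is surjective.
\end{itemize}
Since $H^2_E(Y,\Omega^{n-1}_Y(\log E)(-E))$ has dimension $b^{1,n-2}+l^{1,n-2}$ by Corollary~\ref{cor: 1n-2}, both invariants vanish.

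Your step~2 is essentially the first half of Lemma~\ref{lem: conn2}: \eqref{top} plus strictness gives surjectivity onto $H^1(E,\Omega^{n-1}_Y(\log E)\otimes\mathcal{O}_E)$, the $l$-part. The paper must combine this, through an octahedral diagram, with the Popa--Shen--Vo surjectivity onto the $b$-part (Corollary~\ref{cor: surj log}). What matters is the combined group of dimension $b^{1,n-2}+l^{1,n-2}$, and without the injectivity lemma there is nothing to contradict.

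The theorem then follows in three moves. Take $\Delta$ from the generic strata of the local Kuranishi spaces (Theorem~\ref{thm: strat}), so that for $t\in\Delta$ the semi-universal family of $\mathcal{X}_t$ itself admits a good simultaneous resolution. Check that K\"ahler, log-canonical, $\omega\cong\mathcal{O}$ and \eqref{top} are open conditions. Apply Corollary~\ref{cor: key surj2} to $\mathcal{X}_t$ rather than to $X$. Neither smoothness of $\Def(X)$ nor any curve argument is needed.
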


The hypothesis \eq{top} is slightly stronger than \eq{DB complex}. More precisely, there is an exact sequence 
\[
\gr^{n-1}_FH^n(X^\reg,\C)\to \gr^{n-1}_FH^{n+1}(X,X^\reg;\C)\to \mathbb{H}^{n-1}(X,\ul\Om^2_X)\to \mathbb{H}^1(X,\ul\Om^{n-2}_X).
\]
As $H^n(X^\reg,\C)\to H^{n+1}(X,X^\reg;\C)$ is a morphism of mixed Hodge structures, \eq{top} implies \eq{DB complex}. For isolated rational singularities of dimension $n=3$ the relation is clearer:
\begin{thm}\label{4}
Let $X$ be a compact complex $3$-fold with isolated rational singularities and whose resolutions satisfy the $\partial\db$ lemma. Then for $n=3$ the conditions \eq{DB complex} and \eq{top} are equivalent. Moreover, if $X$ is an algebraic variety then they are necessary and sufficient for $X$ to be $\Q$-factorial.
\end{thm}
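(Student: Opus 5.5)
The plan is to use the exact sequence displayed just before Theorem~\ref{4} and analyse each term for $n=3$ with isolated rational singularities. Write $\Si=X\-X^\reg$ (finite). By excision, $H^{k}(X,X^\reg;\C)\cong\bigoplus_{x\in\Si}H^{k}(U_x,U_x\-\{x\};\C)\cong\bigoplus_x H^{k-1}(L_x,\C)$, where $U_x$ is a contractible (conic) neighbourhood and $L_x$ the link. For $n=3$ we need $H^4(X,X^\reg)\cong\bigoplus_x H^3(L_x)$. The connecting map $H^3(X^\reg)\to H^4(X,X^\reg)$ is then, up to sign, the restriction $H^3(X^\reg)\to\bigoplus_x H^3(L_x)$. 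Its cokernel injects into $H^4(X)$, and \eq{top} says exactly that $H^4(X,\C)\to H^4(X^\reg,\C)$ is injective.

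\textbf{Step 1: the Hodge types on the links.} For isolated rational $3$-fold singularities, the standard vanishing (Steenbrink, using $R^i\pi_*\O_Y=0$ for a good resolution $\pi:Y\to X$ with exceptional divisor $E$) gives the mixed Hodge structure on $H^3(L_x)$. By duality it has weights $\ge4$ and Hodge numbers $l^{p,q}$ with $l^{3,q}$ and $l^{q,3}$ governed by $H^q(E,\O_E)$, so the relevant ones vanish. The claim to establish is that $H^3(L_x)$ is pure of type $(2,2)$, or at least that $\gr_F^{2}H^3(L_x)=H^3(L_x)$. Granting this, $\gr_F^{2}H^4(X,X^\reg)=H^4(X,X^\reg)$.

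\textbf{Step 2: strictness.} The $\partial\db$ lemma on resolutions is used to give mixed Hodge structures on $H^\bullet(X)$ and $H^\bullet(X^\reg)$ with the maps strictly compatible with $F$; this is the same input as in the derivation of the exact sequence. By strictness, surjectivity of $H^3(X^\reg)\to H^4(X,X^\reg)$ is equivalent to surjectivity on $\gr_F^2$, since the target is concentrated in $\gr_F^2$. The exact sequence then shows this is equivalent to injectivity of $\mathbb H^{2}(X,\ul\Om^2_X)\to\mathbb H^1(X,\ul\Om^1_X)$.

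\textbf{Step 3: identify that map with the inequality \eq{DB complex}.} This needs the map to be a duality pairing of equal-dimensional spaces up to a defect. For rational isolated singularities, $\ul\Om^p_X$ relates to $\pi_*\Om^p_Y(\log E)(-E)$, and Serre/Grothendieck duality with $\om_Y$ pairs $\mathbb H^{2}(\ul\Om^2_X)$ against $\mathbb H^1(\ul\Om^1_X)$ twisted appropriately. Proposition~\ref{prop: 45}-type arguments then say the map is injective iff $\dim\mathbb H^2(\ul\Om^2)\le\dim\mathbb H^1(\ul\Om^1)$. More concretely, the preceding term in the long exact sequence, $\mathbb H^1(\ul\Om^1)\to\mathbb H^{1}(\cdots)$, is surjective by a rationality/local vanishing argument, so the map is injective iff the dimensions satisfy \eq{DB complex}.

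\textbf{Step 4: the algebraic case.} This follows from Park--Popa \cite[Theorem E]{PP}, quoted in the introduction: for projective $3$-folds with rational singularities, \eq{DB complex} with $n=3$ is equivalent to $\Q$-factoriality. For a general algebraic (possibly non-projective) $X$, I would rerun their local-to-global argument with $\mathrm{Cl}/\mathrm{Pic}$ compared through $H^4(X)\to H^4(X^\reg)$, since Weil divisor classes modulo Cartier correspond to local classes in $\bigoplus_x H^2(L_x)$ / $H^4$ of the pair.

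The main obstacle is Step~3, identifying the end map of the exact sequence so that its injectivity matches the dimension inequality. I would first try to show that the next term $\mathbb H^1(X,\ul\Om^1_X)\to \gr_F^{1}H^{3}$-type local term vanishes, using that for rational isolated singularities the local cohomology $H^\bullet_{\{x\}}(\ul\Om^1_X)$ is concentrated in the top degrees.
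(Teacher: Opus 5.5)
\textbf{There is a genuine gap, in Step 3, and it is the step that carries the content.} Your Steps 1--2 are the same reduction the paper makes: push everything into $\gr^2_F$ using $H^3(L,\C)=\gr^2_FH^3(L,\C)$ and strictness. But Step 3, which you yourself flag as the obstacle, is left open, and your plan for it targets the wrong group.

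\emph{What the end map really is.} The last arrow of the introductory sequence is written loosely there; it is not a map into $\mathbb{H}^1(X,\underline\Omega^1_X)$. Concretely it is $H^2(Y,\Omega^2_Y(\log E)(-E))\to H^2(Y,\Omega^2_Y(\log E))$, coming from $0\to\Omega^2_Y(\log E)(-E)\to\Omega^2_Y(\log E)\to\Omega^2_Y(\log E)\otimes\mathcal{O}_E\to0$. Its target is only Serre dual to $\mathbb{H}^1(X,\underline\Omega^1_X)=H^1(Y,\Omega^1_Y(\log E)(-E))$, so the two spaces merely have the same dimension.

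\emph{What is missing.} Injectivity of this map gives \eqref{DB complex} for free. The converse, \eqref{DB complex} $\Rightarrow$ injectivity, which is the direction \eqref{DB complex} $\Rightarrow$ \eqref{top}, needs the map to be surjective; a dimension inequality alone says nothing about injectivity. Surjectivity comes from the next term of the long exact sequence, $H^2(E,\Omega^2_Y(\log E)\otimes\mathcal{O}_E)=\gr^2_FH^4(L,\C)$, which vanishes because $H^4(L,\C)=0$. The paper (Lemma \ref{lem: H4}) derives this from rationality:
\begin{itemize}
\item $H^1(E,\C)=0$ for rational singularities (Koll\'ar--Mori);
\item hence $H^1(L,\C)=0$ by the Goresky--MacPherson sequence;
\item hence $H^4(L,\C)=0$ by Poincar\'e duality on the $5$-dimensional link.
\end{itemize}
Once the end map is surjective, injectivity, isomorphism and \eqref{DB complex} are all equivalent.

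\emph{Why your suggested route does not reach it.} A ``$\gr^1_FH^3$-type local term'' is the wrong group. The low-degree local-cohomology vanishing you gesture at is at best the Serre-dual form of $\gr^2_FH^4(L)=0$, equivalently $\gr^1_FH^1(L)=0$. It still has to be derived from $H^1(L,\C)=0$, which never enters your argument.

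\textbf{Step 1 is incomplete.} Vanishing of $l^{3,0}$ and $l^{0,3}$ does not give $H^3(L)=\gr^2_FH^3(L)$. You also need $l^{1,2}=0$. The paper gets this from the duality $l^{1,2}=l^{2,0}$ together with Steenbrink's vanishing $l^{p0}=0$ for isolated rational singularities.

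\textbf{Step 4 covers only the projective case.} Park--Popa applies to projective $X$, whereas the statement is for compact algebraic varieties; ``rerunning their argument'' is not a proof. The paper argues topologically:
\begin{itemize}
\item it dualizes the comparison of the pair sequences for $(Y,Y\setminus E)$ and $(X,X^\reg)$;
\item it uses Theorem \ref{thm: GM} to write $H^3(L,\C)^*\cong H^2(E,\C)/\sum_i\C[E_i]$;
\item it identifies the dual of the cokernel of $H^3(X^\reg,\C)\to H^3(L,\C)$ with $\im\bigl(H^2(Y,\C)\to H^2(E,\C)/\sum_i\C[E_i]\bigr)$;
\item it concludes by Koll\'ar--Mori's criterion that this image vanishes if and only if $X$ is $\Q$-factorial.
\end{itemize}
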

As \eq{top} concerns only the (singular) cohomology groups, it is an open condition under the deformations of $X.$ This is again indispensable because we follow the proof of Theorem \ref{thm: Nam97}. (In Theorem \ref{thm: Nam97} $\Q$-factorial is an open condition, by Koll\'ar--Mori \cite[Theorem 12.1.10]{MK}.)
 
The condition $b^{1,n-2}=l^{1,n-2}=0$ is a weaker version of the $1$-rational condition. They are equivalent for complete intersection singularities \cite[Theorems 5.2 and 5.3]{FL iso}. In Theorem \ref{thm: CY2} the singularities need not be even locally smoothable and it is too strong to claim that the singularities with $b^{1,n-2}+l^{1,n-2}\ne0$ disappear as in Theorem \ref{thm: Ten2}. In Theorem \ref{thm: CY2} they become singularities with $b^{1,n-2}=l^{1,n-2}=0.$

For $n=3,$ by Namikawa--Steenbrink \cite[Theorem 1.1]{NS} the singularities with $b^{11}=l^{11}=0$ are rigid. Combining this with Theorem \ref{thm: CY2} we prove 
\begin{cor}\label{cor: CY2}
Let $X$ be a compact K\"ahler log-canonical $3$-fold with $\om_X\cong\O_X,$ with isolated singularities and satisfying \eq{top} with $n=3.$ Suppose that for every singular point $x\in X$ 
\begin{equation}\label{strong sm}\parbox{10cm}{
there exist a semi-universal deformation $g:\cU\to\Def(X,x)$ and a Zariski dense open subset $S\sb\Def(X,x)$ such that for every $s\in S$ the fibre $g^{-1}(s)$ is non-singular.  
}\end{equation}
Then there exist a semi-universal deformation $f:\cX\to\Def(X)$ and a Zariski dense open subset $\De\sb \Def(X)$ such that for every $t\in\De$ the fibre $f^{-1}(t)$ is non-singular.
\end{cor}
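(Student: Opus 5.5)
We deduce the corollary from Theorem \ref{thm: CY2} together with the local result of Namikawa--Steenbrink that isolated $3$-fold singularities with $b^{11}=l^{11}=0$ are rigid. The plan is a two-step density argument. First, Theorem \ref{thm: CY2} applied to $X$ gives a semi-universal deformation $f:\cX\to\Def(X)$ and a Zariski dense open $\De_1\sb\Def(X)$ such that for $t\in\De_1$ every singular point of $X_t=f^{-1}(t)$ has $b^{11}=l^{11}=0$. By \cite[Theorem 1.1]{NS} every such singular point $y\in X_t$ is rigid, i.e.\ the germ $(X_t,y)$ has zero-dimensional (reduced point) Kuranishi space.

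Second, we compare with the local deformations. The germs $(X,x)$ at the finitely many singular points induce a morphism of germs $\Phi:\Def(X)\to\prod_x\Def(X,x)$. Any singular point of $X_t$ for $t$ near $0$ lies near some singular point $x$ of $X$, and the germ of $\cX$ near $x$ is the pull-back of the semi-universal family $g_x:\cU_x\to\Def(X,x)$ along $\Phi_x$. Hence $X_t$ is smooth near $x$ iff the fibre $g_x^{-1}(\Phi_x(t))$ is smooth (on a fixed small neighbourhood). Now take $t\in\De_1$ and a singular point $y\in X_t$ near $x$. Then $y$ is also a singular point of the fibre $U=g_x^{-1}(s)$, $s=\Phi_x(t)$, and $(U,y)$ is rigid. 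By openness of versality, $g_x$ near $(s,y)$ is versal for $(U,y)$; since $(U,y)$ is rigid, the germ $(\cU_x,y)\to(\Def(X,x),s)$ is trivial, so every fibre over a neighbourhood of $s$ is singular near $y$. This contradicts the hypothesis \eq{strong sm} that the smooth fibres lie over a dense open $S$, because every neighbourhood of $s$ meets $S$. Therefore no such $y$ exists, and $X_t$ is non-singular for all $t\in\De_1$ sufficiently close to $0$. We set $\De=\De_1$, after shrinking the representative $\Def(X)$ so that "near" is uniform.

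The main obstacle is making the "near" statements rigorous: the semi-universal families are only germs, so one must choose representatives of $\Def(X)$, $\Def(X,x)$ and neighbourhoods $V_x\ni x$ such that:
\begin{itemize}
\item[(a)] $\operatorname{Sing}(X_t)\sb\bigcup V_x$ for all $t$;
\item[(b)] $\cX|_{V_x}$ is the pull-back of $\cU_x$;
\item[(c)] openness of versality holds at every point of the chosen representative.
\end{itemize}
Condition (a) follows from properness of $f$ and closedness of the singular locus of the morphism. Condition (b) holds by construction of $\Phi$. For (c), Grauert's openness of versality gives a neighbourhood of $\cU_x$ on which $g_x$ is versal at all points. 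If Zariski density is to be preserved, we note that the set of $t$ with smooth fibre is Zariski open by properness, since it is the complement of $f(\operatorname{Sing} f)$. Since it contains the nonempty analytic-open set found above, it is Zariski dense in each component through $0$, which gives the required $\De$.
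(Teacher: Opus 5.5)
Your proposal is correct and follows the paper's own proof. You take $\De$ from Theorem \ref{thm: CY2}, use \eqref{strong sm} through the local pull-back to $\Def(X,x)$ to see that singular points of $f^{-1}(t)$ would be locally smoothable, and use Namikawa--Steenbrink rigidity for $b^{11}=l^{11}=0$ to conclude that no such points exist; you just spell out the choice of representatives that the paper leaves implicit. One simplification: openness of versality is not actually needed in your step, since rigidity of $(U,y)$ by itself forces every deformation of it, including the germ of $g_x$ at $y$ over $(\Def(X,x),s)$, to be trivial.
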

\begin{proof}
Let $f:\cX\to\Def(X)$ and $\De\sb\Def(X)$ be as in Theorem \ref{thm: CY2}. By \eq{strong sm}, after making $\Def(X)$ smaller if necessary, the singularities of $f^{-1}(t),t\in\De,$ are locally smoothable. But as $b^{11}=l^{11}=0$ they are also rigid. This means that $f^{-1}(t)$ is non-singular. 
\end{proof}

Corollary \ref{cor: CY2} generalizes Theorem \ref{thm: Nam97}. The hypothesis \eq{strong sm} is weaker than that of Theorem \ref{thm: Nam97}.
  
We turn now to the deformations of Fano varieties. We generalize the principal part of the following theorem.
\begin{thm}[Tenie {\cite[Theorem 1.5]{Ten}}]\label{thm: Ten Fano}
Let $X$ be a projective log-canonical Gorenstein $n$-fold, $n\ge3,$ with isolated complete intersection singularities and such that $\om_X^{-1}$ is ample. Then there exist an open neighbourhood $\De$ of $0\in\C$ and a proper flat morphism $f:\cX\to\De$ with $f^{-1}(0)\cong X$ and such that the singularities of $f^{-1}(t), t\in\De\-\{0\},$ have $b^{1,n-2}=l^{1,n-2}=0.$ The singularities of $X$ with $b^{1,n-2}+l^{1,n-2}\ne0$ become non-singular under this.
\end{thm}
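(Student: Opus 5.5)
The plan is the same as the one sketched above for Theorem \ref{thm: CY1}: a local-to-global deformation argument in the style of Namikawa's proof of Theorem \ref{thm: Nam97}, with the Fano condition taking the place of the K\"ahler and topological hypotheses.

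\emph{Step 1: unobstructedness and surjectivity to local deformations.} Write $\Sigma=\{x_1,\dots,x_k\}$ for the singular points and $U=X\-\Sigma$. Since $X$ is log-canonical Gorenstein with isolated complete intersection singularities and $\om_X^{-1}$ is ample, Kawamata--Viehweg type vanishing gives $H^i(X,\O_X)=0$ for $i>0$. We would show that the natural map
\[
T^1_X\to H^0(X,\mathcal{T}^1_X)=\bigoplus_j T^1_{X,x_j}
\]
is surjective. Its cokernel embeds in $H^2(X,\Theta_X)$, or more precisely in the image there of the local-to-global spectral sequence. To kill it we would argue as follows. By depth considerations ($n\ge3$, isolated singularities), $H^2(X,\Theta_X)\cong H^2(U,\Theta_U)$, and $\Theta_U\cong\Om^{n-1}_U\otimes\om_U^{-1}$. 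Then $H^2(U,\Om^{n-1}_U\otimes\om^{-1})$ vanishes in the relevant range by a Steenbrink/Akizuki--Nakano type vanishing for log-canonical pairs with ample twist. Because the local Kuranishi spaces of ICIS are smooth, this yields that $\Def(X)$ is smooth and that $\Def(X)\to\prod_j\Def(X,x_j)$ is a smooth morphism.

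\emph{Step 2: local statement.} For an ICIS, Friedman--Laza show that the locus in $\Def(X,x_j)$ where all fibres have $b^{1,n-2}=l^{1,n-2}=0$ is Zariski dense open. Indeed, for ICIS this condition is equivalent to $1$-rationality, and the general fibre of a smoothing, which exists, is smooth. Pulling this locus back along the smooth surjective map of Step 1 gives the dense open $\De$. The singularities of $X$ with $b^{1,n-2}+l^{1,n-2}\ne0$ are not $1$-rational, so they are deformed away along the one-parameter subfamily through a general point.

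\emph{Main obstacle.} The hardest step is the vanishing in Step 1 for log-canonical rather than rational singularities. The twisted Akizuki--Nakano vanishing on $U$ needs to be transported to a log resolution with the correct boundary, and the local cohomology terms $H^i_{\Sigma}$ must be controlled. For the latter I would first try using the depth of $\Theta_X$ together with the ICIS hypothesis.
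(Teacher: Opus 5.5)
There is a genuine gap in Step~1, and it is exactly where the difficulty of the theorem lies. At each $x_j$, an ICIS of dimension $n\ge3$, take a small Stein neighbourhood $V_j$. Then
\[
H^2_{x_j}(X,\Theta_X)\cong H^1(V_j\setminus\{x_j\},\Theta_X)\cong T^1_{X,x_j}\ne0,
\]
where the second isomorphism is Schlessinger's theorem. So depth gives nothing beyond $2$.

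The local cohomology sequence
\[
H^1(U,\Theta_U)\to\textstyle\bigoplus_j T^1_{X,x_j}\to H^2(X,\Theta_X)\to H^2(U,\Theta_U)
\]
is your local-to-global sequence, with $H^1(U,\Theta_U)\cong\Ext^1(\Omega_X,\mathcal O_X)$. Hence injectivity of $H^2(X,\Theta_X)\to H^2(U,\Theta_U)$ is equivalent to the surjectivity you want; it is not a depth consequence. Nor does $H^2(U,\Omega^{n-1}_U\otimes\omega^{-1}_U)$ vanish. For $n=3$ it is infinite-dimensional, because its image in the top-degree local cohomology $H^3_\Sigma(X,\Theta_X)$ (itself infinite-dimensional) has finite codimension.

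The vanishing actually available is $H^2(Y,\Omega^{n-1}_Y(\log E)(-E)\otimes\pi^*\omega_X^{-1})=0$ on a good resolution, from \cite{GNPP,St85}. It needs no log-canonicity, so that is not the real obstacle. It gives surjectivity of $H^1(U,\Theta_U)$ only onto the quotient $\bigoplus_j H^2_E(Y,\Omega^{n-1}_Y(\log E)(-E))$ of $\bigoplus_j T^1_{X,x_j}$. By Corollary~\ref{cor: 1n-2}, the $j$-th summand has dimension $b^{1,n-2}+l^{1,n-2}$ at $x_j$; this is precisely the vanishing used in Lemma~\ref{lem: Fano}. That is why the statement only claims to smooth the points with $b^{1,n-2}+l^{1,n-2}\ne0$. $1$-rational ICIS, e.g.\ ordinary double points with $n\ge4$, are invisible to this map. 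Your Step~1 would make $\Def(X)\to\prod_j\Def(X,x_j)$ smooth and surjective, and hence, as you note yourself, would make every such $X$ smoothable, which is more than is asserted.

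Consequently Step~2 has nothing to pull back along: a priori the image of $\Def(X)$ in $\Def(X,x_j)$ may lie in the discriminant, and ruling this out at the bad points is the content of the theorem. A repair along your lines needs three things:
\begin{itemize}
\item[(i)] unobstructedness of $\Def(X)$ by a separate argument (the paper attributes smoothness of $\Def(X)$ in this setting to \cite{Nam94, Gross, Friedman});
\item[(ii)] an ICIS-specific local argument, the mechanism the paper attributes to Tenie, that tangent vectors with nonzero image in $H^2_E(Y,\Omega^{n-1}_Y(\log E)(-E))$ have the effect of smoothing $(X,x_j)$;
\item[(iii)] control showing that the remaining, $1$-rational, points only deform to singularities with $b^{1,n-2}=l^{1,n-2}=0$.
\end{itemize}

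The paper does not reprove this statement; it quotes it from \cite{Ten}. Its generalization, Theorem~\ref{thm: Fano}, uses the same vanishing differently. Suppose the semi-universal family has a good simultaneous resolution, which the stratification of Theorem~\ref{thm: strat} arranges at generic points of $\Def(X)$. Then Theorem~\ref{thm: surj tang} forces the connecting map into $H^2_E(Y,\Omega^{n-1}_Y\otimes\pi^*\omega_X^{-1})$ to vanish, and Lemma~\ref{lem: inj} then forces $H^2_E(Y,\Omega^{n-1}_Y(\log E)(-E))=0$. This route needs neither smoothness of $\Def(X)$ nor any surjectivity onto local $T^1$, but it yields only the first assertion, not the smoothing of the bad points.
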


The following theorem is our main result. 
\begin{thm}\label{thm: Fano}
Let $X$ be a projective log-canonical Gorenstein $n$-fold, $n\ge3,$ with isolated singularities and such that $\om_X^{-1}$ is ample. Then there exist a semi-universal deformation $f:\cX\to\Def(X)$ and a Zariski dense open subset $\De\sb \Def(X)$ such that for every $t\in\De$ the singularities of $f^{-1}(t)$ have $b^{1,n-2}=l^{1,n-2}=0.$
\end{thm}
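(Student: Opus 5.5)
The proof will follow the strategy of Theorems \ref{thm: CY1} and \ref{thm: CY2}, which is Namikawa's method for Theorem \ref{thm: Nam97}, adapted to the Fano case. The Fano condition gives two simplifications. Since $-K_X$ is ample and the singularities are log-canonical, Kawamata--Viehweg type vanishing gives $H^i(X,\O_X)=0$ for $i>0$. So $H^2(X,\O_X)=0$, and projectivity, together with the ampleness of $\om^{-1}$, is an open condition on $\Def(X)$. Moreover the topological hypothesis \eq{top} should come for free from the vanishing of $H^{n-1}(X,\ul\Om^2_X)$-type groups. These hold by Akizuki--Nakano type vanishing for Du Bois complexes twisted by the ample $\om_X^{-1}$.

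The plan has four steps.

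\textbf{Step 1: unobstructedness of the local-to-global map.} The global-to-local map $\Def(X)\to\prod_x\Def(X,x)$ should be smooth, or at least its tangent map $\Ext^1(\Om_X,\O_X)\to\bigoplus_x H^0(T^1_{X,x})$ should be surjective with unobstructed global part. The obstruction to surjectivity lies in $H^2(X,T_X)$, or more precisely in $H^2$ of the sheaf of derivations. With $\om_X$ invertible, $T_X\cong \Om^{[n-1]}_X\otimes\om_X^{-1}$. Using vanishing for reflexive differentials twisted by an ample bundle on log-canonical spaces, I expect $H^2(X,\Om^{[n-1]}_X\otimes\om_X^{-1})$ to vanish, or at least its relevant image to vanish. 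Concretely, one uses the local cohomology sequence at the isolated singular points together with Steenbrink-type vanishing of $H^i$ for $i\ge2$ on a log resolution.

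\textbf{Step 2: simultaneous resolution and semicontinuity.} As in the proof of Theorem \ref{thm: CY1}, one finds a Zariski dense open $D\sb\Def(X)$ over which the family admits a simultaneous log resolution. On $D$ the invariants $b^{1,n-2}$ and $l^{1,n-2}$ are upper semicontinuous, so it suffices to show that the locus where they vanish is dense.

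\textbf{Step 3: the key density claim.} I would argue by contradiction, following Namikawa. Suppose that near a general point $t\in D$ some singular point has $b^{1,n-2}\ne0$ or $l^{1,n-2}\ne0$. Then there is a nonzero class in the local $T^1$ pairing nontrivially with the corresponding piece of the link cohomology $\gr_F^{n-1}H^n(L)$, which is identified with $\gr_F H^{n+1}(X,X^\reg)$. By Step 1, every local first-order deformation direction of $(X_t,x)$ is realized globally. Along such a direction, following the local argument behind Theorems \ref{thm: Ten1}--\ref{thm: Ten2}, the invariant strictly drops: the Friedman--Laza/Tenie computation shows that the derivative of the period-type map is nonzero. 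This contradicts generality of $t$.

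In the Calabi--Yau case, realizing $l^{1,n-2}$-directions needs \eq{top}. Here I would instead show that $H^n(X^\reg)\to H^{n+1}(X,X^\reg)$ is surjective on the relevant Hodge piece, using the exact sequence displayed after Theorem \ref{thm: CY2} and the vanishing of $\mathbb{H}^{n-1}(X,\ul\Om^2_X)$. That vanishing comes from Serre--Grothendieck duality with $\mathbb{H}^1$ of $\ul\Om^{n-2}$-type complexes twisted by $\om_X$, which is anti-ample. Applying Step 3 with this input yields both $b^{1,n-2}=0$ and $l^{1,n-2}=0$ on a dense open subset.

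I expect the main obstacle to be Step 3 for non-complete-intersection singularities. One needs local control showing that a nonzero $b^{1,n-2}$ or $l^{1,n-2}$ produces an element of $T^1_{X,x}$ along which the invariant strictly drops. This must be done without relying on the Milnor-fibre theory available for complete intersections. I would first try to obtain it by working directly with the simultaneous resolution. The relevant classes in $H^1(\tilde X,\Om^{n-2}_{\tilde X}(\log E)(-E))$ of the resolution $\tilde X$ would be interpreted via duality as deformation directions, and the drop would be shown via Kodaira--Spencer.
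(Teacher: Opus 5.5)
\textbf{There is a genuine gap, and it is the one you flag as the main obstacle.} Your Step 3 rests on a \emph{local} claim. The claim is that for an arbitrary isolated log-canonical Gorenstein singularity with $b^{1,n-2}+l^{1,n-2}\neq0$, some direction in $T^1_{X,x}$ makes the invariant strictly drop, via a nonvanishing derivative of a ``period-type map''. For non-complete-intersection singularities there is no Milnor-fibre theory to supply this, and the local Kuranishi space may be singular. You give no argument for it. Since this claim is essentially the whole content of the theorem, the proposal does not prove it.

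The supporting steps also do not hold up:
\begin{itemize}
\item Step 1 (surjectivity or smoothness of $\Def(X)\to\prod_x\Def(X,x)$) is only asserted. Twisted Steenbrink vanishing on a resolution does not give $H^2(X,\Th_X)=0$. It only shows that $H^2(X,\Th_X)$ is a quotient of the local terms $H^0(X,R^1\pi_*(\Om^{n-1}_Y(\log E)(-E)\otimes\pi^*\om_X^{-1}))$, which you do not control.
\item The vanishing of $\mathbb{H}^{n-1}(X,\ul\Om^2_X)$, which you invoke to get \eqref{top} for free, is false. For a smooth Fano threefold this group is $H^2(X,\Om^2_X)\cong H^1(X,\Om^1_X)^\vee\neq0$. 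Ampleness of $\om_X^{-1}$ helps only through $\Th\cong\Om^{n-1}\otimes\om^{-1}$, i.e.\ for a \emph{twisted} group.
\end{itemize}

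\textbf{The missing idea.} The simultaneous resolution is not merely a device for semicontinuity. Its existence over the whole Kuranishi space is by itself incompatible with $b^{1,n-2}+l^{1,n-2}\neq0$. So no local deformation direction has to be produced, and no unobstructedness is needed. The paper's argument runs as follows.
\begin{itemize}
\item It chooses $\De$ via the stratifications of Theorem \ref{thm: strat}. Then for $t\in\De$ the semi-universal deformation of $\cX_t$ itself admits a good resolution.
\item Openness of log-canonicity (Theorem \ref{thm: lc}) and $H^2(X,\O_X)=0$ keep $\cX_t$ a log-canonical Gorenstein Fano. Rename $\cX_t$ as $X$ and let $\pi:(Y,E)\to(X,X^\sing)$ be the induced resolution.
\item Theorem \ref{thm: surj tang} gives surjectivity of $H^1(Y,\Th_Y(-\log E))\to H^1(X^\reg,\Th_{X^\reg})$. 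Hence, via the contraction $\Th_Y(-\log E)\to\Om^{n-1}_Y\otimes\pi^*\om_X^{-1}$, the connecting map $H^1(X^\reg,\Th_{X^\reg})\to H^2_E(Y,\Om^{n-1}_Y\otimes\pi^*\om_X^{-1})$ vanishes.
\item This map factors through $H^2_E(Y,\Om^{n-1}_Y(\log E)(-E)\otimes\pi^*\om_X^{-1})$. The second factor is injective by Lemma \ref{lem: inj}, after trivializing $\om_X$ near the singular points. So the first factor is zero.
\item The first factor is also surjective, because $H^2(Y,\Om^{n-1}_Y(\log E)(-E)\otimes\pi^*\om_X^{-1})=0$ by Steenbrink vanishing ($p+q=n+1>n$). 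This twisted vanishing is what replaces \eqref{top}.
\item Therefore $H^2_E(Y,\Om^{n-1}_Y(\log E)(-E))=0$, and Corollary \ref{cor: 1n-2} gives $b^{1,n-2}=l^{1,n-2}=0$. This is Lemma \ref{lem: Fano}.
\end{itemize}
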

Here is no counterpart to \eq{top} and the statement is simpler than that of Theorem \ref{thm: CY2}. Again the singularities of $X$ need not even be locally smoothable and it is too strong to claim that the singularities with $b^{1,n-2}+l^{1,n-2}\ne0$ disappear as in Theorem \ref{thm: Ten Fano}. In Theorem \ref{thm: Fano} they become singularities with $b^{1,n-2}=l^{1,n-2}=0.$ For $n=3,$ combining Theorem \ref{thm: Fano} with Namikawa--Steenbrink's rigidity theorem \cite[Theorem 1.1]{NS} we obtain an analogue of Corollary \ref{cor: CY2}:
\begin{cor}\label{cor: Fano}
Let $n=3$ and let $X$ be as in Theorem \ref{thm: Fano}. Suppose that every singular point $x\in X$ satisfies \eq{strong sm}. Then there exist a semi-universal deformation $\cX\to\Def(X)$ and a Zariski dense open subset $S\sb \Def(X)$ such that for every $s\in S$ the fibre $\cX_s$ is non-singular. \qed
\end{cor}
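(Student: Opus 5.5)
We derive Corollary \ref{cor: Fano} from Theorem \ref{thm: Fano} in the same way that Corollary \ref{cor: CY2} was derived from Theorem \ref{thm: CY2}. First we take $f:\cX\to\Def(X)$ and the Zariski dense open subset $\De\sb\Def(X)$ given by Theorem \ref{thm: Fano} with $n=3$. For every $t\in\De$ the singular points of $\cX_t=f^{-1}(t)$ then have $b^{11}=l^{11}=0$. By the Namikawa--Steenbrink rigidity theorem \cite[Theorem 1.1]{NS}, each such singular point is rigid, so it admits no non-trivial deformations.

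Second, we transfer the local smoothability hypothesis \eq{strong sm} from the points of $X$ to the singular points of nearby fibres. Let $x\in X$ be a singular point and $t\in\Def(X)$ close to $0$. The germ of $\cX$ over a neighbourhood of $t$, near a singular point $y\in\cX_t$ close to $x$, is induced from the semi-universal deformation $g:\cU\to\Def(X,x)$ by pulling back along a morphism $\Def(X)\to\Def(X,x)$. So $(\cX_t,y)\cong(g^{-1}(s),u)$ for some $s\in\Def(X,x)$ and some $u\in g^{-1}(s)$.

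By \eq{strong sm} and openness of versality, after shrinking $\Def(X)$ the germ $(g^{-1}(s),u)$ is itself smoothable. To see this, note that its own semi-universal deformation is a restriction of $g$ near $(s,u)$, up to a smooth factor. Since $S$ is Zariski dense and open in $\Def(X,x)$, its intersection with any neighbourhood of $s$ is still dense, and the fibres over it are non-singular near $u$. Hence every singular point of $\cX_t$ is locally smoothable.

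A germ that is smoothable and also rigid must be non-singular. Its trivial deformation is semi-universal, so every deformation, in particular a smoothing, is locally trivial, and this forces the germ to be smooth. Therefore $\cX_t$ is non-singular for all $t$ in $S:=\De$ (after the shrinking above), which is still Zariski dense open.

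The main obstacle is the second step. We must make precise that "local smoothability" persists to all germs appearing near $x$, including those over the non-generic locus of $\Def(X,x)$. This uses that $S$ is dense in every neighbourhood of every point, which holds because $\Def(X,x)$ may be taken with $S$ dense in each irreducible component, together with openness of versality, as in Namikawa's argument for Theorem \ref{thm: Nam97}.
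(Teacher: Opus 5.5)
Your proposal is correct and follows the paper's argument. The paper proves Corollary \ref{cor: CY2} explicitly and obtains this corollary by the same reasoning: take $\De$ from Theorem \ref{thm: Fano}, use \eq{strong sm} to see that after shrinking $\Def(X)$ the singular points of $\cX_t$, $t\in\De$, are locally smoothable, get rigidity from $b^{11}=l^{11}=0$ via \cite[Theorem 1.1]{NS}, and conclude that $\cX_t$ is non-singular. Your extra detail on transferring smoothability to the nearby germs $(g^{-1}(s),u)$ is fine, though openness of versality is not actually needed there: flatness of $g$ near $(s,u)$ together with the density of $S$ already gives a smoothing.
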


The feature of our results including the Calabi--Yau case is that the Kuranishi space $\Def(X)$ is possibly singular. In Theorems \ref{thm: NS}--\ref{thm: Ten2}, \ref{thm: Nam97} and \ref{thm: Ten Fano} it is non-singular (by the results of \cite{Nam94, Gross, Friedman}) and moreover the singularities are complete intersections. The proof of Theorem \ref{thm: Ten1}, for instance, is as follows. Let $x\in X$ be a singular point with $b^{1,n-2}(X,x)\ne0.$ Then there is a corresponding tangent vector $\Def(X,x)$ which has the effect of smoothing the germ $(X,x).$ Varying $x$ we get finitely many tangent vectors, and there is no obstruction to extending them to a global deformation. In the proof of Theorem \ref{thm: Ten2} the singular points of $X$ with $b^{1,n-2}+l^{1,n-2}\ne0$ have local smoothings. There is now an obstruction to extending them to a global deformation, which is exactly the hypothesis \eq{DB complex}.

For singularities which are not complete intersections we use the method of proof of Theorem \ref{thm: Nam97}. For each singular point $x\in X$ there is a stratification of the Kuranishi space $\Def(X,x)$ such that over each stratum there is a simultaneous resolution of the deformation family. The origins of the Kuranishi spaces $\Def(X)$ and $\Def(X,x)$'s are all denoted by $o.$ There exists a tangent vector $v\in T_o\Def(X)$ such that if $b^{11}(X,x)+l^{11}(X,x)\ne0$ then the image of $v$ in $T_o\Def(X,x)$ is not tangent to the stratum containing $o.$ This implies that there exists a global deformation of $X$ which has the effect of moving to the higher-dimensional strata of $\Def(X,x)$'s. As there are finitely many strata, the induction terminates. 

This in fact does not require $\Def(X)$ to be non-singular. Suppose that the deformation family over $\Def(X)$ has a simultaneous resolution and denote by $(Y,E)\to(X,X^\sing)$ the induced resolution of $X.$ By Koll\'ar--Mori \cite[Proposition 11.4]{MK} there exist certain morphisms $\Def(Y,E)\to\Def(Y)\to\Def(X)$ and we show that the induced map $T_o\Def(Y,E)\to T_o\Def(X)$ is surjective. (Roughly speaking, the simultaneous contraction of the deformation family over $\Def(Y,E)$ or $\Def(Y,E)$ is the same as the original family. For the details see Theorem \ref{thm: surj tang}.) The same computation of tangent spaces as in the proof of Theorem \ref{thm: Nam97} shows that the singularities of $X$ have $b^{11}=l^{11}=0.$ In other words, if $X$ has singular points with $b^{11}+l^{11}\ne0$ then no simultaneous resolutions exist over $\Def(X).$ This means that we can move to the higher-dimensional strata of $\Def(X,x)$'s.   

We thus generalize Theorem \ref{thm: Nam97} to Corollary \ref{cor: CY2}. In the same way we generalize Theorems \ref{thm: Ten1}, \ref{thm: Ten2} and \ref{thm: Ten Fano} to Theorems \ref{thm: CY1}, \ref{thm: CY2} and \ref{thm: Fano} respectively. (As we have seen above, Corollary \ref{cor: CY2} follows immediately from Theorem \ref{thm: CY2}. In the later sections therefore only Theorem \ref{thm: CY2} is proved.)

We begin in \S\ref{sect: SR} and \S\ref{sect: SC} with the treatment of simultaneous resolutions and contractions. In \S\ref{sect: SR} we introduce certain stratifications of $\Def(X,x)$'s following the proof of Theorem \ref{thm: Nam97}. In \S\ref{sect: SC} we prove that $T_o\Def(Y,E)\to T_o\Def(X)$ is surjective as stated above.

In \S\ref{sect: Kahl} we prove that K\"ahler is an open condition with respect to the deformation parameters. For this we use the result of Bingener \cite{Bing} and the method of proof of Namikawa \cite[Proposition 5]{Nam2001}.

In \S\ref{sect: lc}--\S\ref{sect: vanishing} we recall the relevant facts about log-canonical and Du Bois singularities. By Ishii {\cite[Theorem 2.3]{Ish G}} log-canonical and Gorenstein are equivalent for isolated Gorenstein singularities. In Theorem \ref{thm: lc} we prove that log-canonical is an open condition under the deformations of $X.$ This is well known if the base space is a curve. In our main results however the base space is the whole Kuranishi space $\Def(X)$ and we therefore prove the openness statement.

In \S\ref{sect: DB} we recall the basis properties of isolated Du Bois singularities. In \S\ref{sect: vanishing} we review the relevant vanishing theorems for isolated Du Bois singularities. Theorem \ref{thm: PSV} is a result of Popa, Shen and Vo \cite[Proposition 5.2]{PSV} (or Friedman \cite[Remark 2.5]{Friedman}) which is the key to Tenie's proof of Theorems \ref{thm: Ten1} and \ref{thm: Ten2}. For this we do not need the singularities to be complete intersections and we can therefore use it to prove Theorems \ref{thm: CY1} and \ref{thm: CY2}. 

In \S\ref{sect: proof of CY1} we prove Theorem \ref{thm: CY1}. This is similar to the proof of Theorem \ref{thm: Ten1}, apart from the difference explained above.  

In \S\ref{sect: proof of CY2} we prove Theorem \ref{thm: CY2}. This is modelled on the proof of Theorem \ref{thm: Nam97} but part of it is special to rational singularities of dimension $n=3.$ The latter is replaced by a certain computation using the techniques of Friedman--Laza \cite[Theorem 2.1(iii),(v)]{FL} which apply to Du Bois singularities of dimension $n\ge3.$ 

In \S\ref{sect: proof of 4} we prove Theorem \ref{4}. This is essentially the same as the proof of Namikawa--Steenbrink \cite[Lemma 3.5]{NS}.

In \S\ref{sect: proof of Fano} we prove Theorem \ref{thm: Fano}. This is similar to the proof of Theorem \ref{thm: Ten Fano}, apart from the difference explained above.

\section{Simultaneous Resolutions}\label{sect: SR}
A {\it simultaneous resolution of complex spaces} is a resolution of a complex space morphism, which we define as follows.
\begin{dfn}\label{dfn: simul resol}
Let $X,S$ be complex spaces and $X\to S$ a flat morphism with reduced fibres. A {\it resolution of the morphism $X\to S$} consists of a complex space $Y,$ a smooth morphism $Y\to S$ and a proper bimeromorphic $S$-morphism $f:Y\to X$ which induces fibrewise resolutions of singularities. 
\end{dfn}

We define a relative version of simple normal crossing divisors.
\begin{dfn}\label{dfn: relative snc}
Let $Y,S$ be complex spaces and $f:Y\to S$ a smooth morphism. A {\it simple normal crossing divisor on $Y\to S$} is an analytic subset $D\subset Y$ such that for every $y\in D$ there exist an open neighbourhood $U\sb Y$ of $y,$ an open neighbourhood $V\sb S$ of $f(y),$ an open neighbourhood $W\sb f^{-1}(f(y))$ of $y,$
an $S$-isomorphism $U\to V\times W$ and holomorphic local coordinates $z_1,\dots,z_l:W\to\C$ such that $U\cap D$ is defined by $z_1\cdots z_k=0$ for some $k\le l.$ 
\end{dfn}

Combining Definitions \ref{dfn: simul resol} and \ref{dfn: relative snc} we define good resolutions of morphisms. 
\begin{dfn}
Let $X,S$ be complex spaces and $X\to S$ a flat morphism with reduced fibres. The singular set of the fibre $X_s, s\in S,$ is denoted by $X_s^\sing.$ A {\it good resolution of $X\to S$} is a resolution $f:Y\to X$ such that $\coprod_{s\in S}f^{-1}(X_s^\sing)^{\rm red}$ is a simple normal crossing divisor on the composite morphism $Y\to S.$
\end{dfn}

We recall next the definition of critical points and critical values for morphisms of non-singular complex spaces.
\begin{dfn}
Let $X,Y$ be non-singular complex spaces and $f:X\to Y$ a morphism. We call a point $x\in X$ a {\it critical point of $f$} if the tangent map $T_xf:T_xX\to T_{f(x)}Y$ is not surjective. We call a point $y\in Y$ a {\it critical value of $f$} if $y=f(x)$ for some critical point $x\in X.$ 
\end{dfn}
Combining Sard's theorem and the proper mapping theorem, we obtain 
\begin{lem}\l{lem: Sard}
Let $X,Y$ be non-singular complex spaces and $f:X\to Y$ a morphism. Let $K\sb X$ be a closed subset which contains all the critical points of $f$ and such that $f|_K:K\to Y$ is a proper map. Then the set of critical values of $f$ is an analytic subset of $Y$ of positive codimension. \qed
\end{lem}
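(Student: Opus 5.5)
The plan is to verify the two assertions separately: first that the critical set $C\sb X$ has closed, analytic image, and then that this image has positive codimension. Here $C\sb X$ denotes the set of critical points of $f$. It is the degeneracy locus of the tangent map $Tf:TX\to f^*TY$, i.e.\ locally the common zero set of the maximal minors of the Jacobian matrix. Hence $C$ is a closed analytic subset of $X$. By hypothesis $C\sb K$ and $C$ is closed, so $C$ is closed in $K$. The restriction $f|_C:C\to Y$ is therefore proper, being the restriction of the proper map $f|_K$ to a closed subset. Remmert's proper mapping theorem now applies to the holomorphic map $f|_C$ of the complex space $C$ (with its reduced structure). It shows that $f(C)$, which is exactly the set of critical values, is an analytic subset of $Y$.

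For positive codimension, I will use Sard's theorem. Regard $f$ as a $C^\infty$ map between the underlying real manifolds. We may assume $X,Y$ are second countable, as is standard for complex manifolds; otherwise I would work componentwise and note that local statements suffice. The map $f$ is $C^\infty$, and the set of its real critical values has Lebesgue measure zero in $Y$. A point $x$ is critical in the holomorphic sense precisely when the complex-linear map $T_xf$ fails to be surjective. Since a complex-linear map is surjective if and only if its underlying real-linear map is, the two notions of critical point coincide. Hence $f(C)$ has measure zero in $Y$.

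An analytic subset of measure zero cannot contain a nonempty open subset of any connected component of $Y$. Indeed, if an analytic set $A$ contained an open set on a component $Y_0$, then $A\cap Y_0=Y_0$ by the identity theorem. This is because an analytic set containing an open set of a connected manifold contains the whole component, and $Y_0$ itself has positive measure. Therefore $f(C)$ is nowhere dense in $Y$, i.e.\ it has positive codimension in every component of $Y$. Combined with the first paragraph, this proves the lemma.

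The main (mild) obstacle is the properness step. Remmert's theorem needs properness of $f|_C$ as a map to $Y$, and this is supplied exactly by the hypothesis on $K$ together with the closedness of $C$. Without $K$ one would only get an image that is locally analytic near points where properness holds. The only other point requiring care is identifying complex and real critical points, and that is elementary linear algebra.
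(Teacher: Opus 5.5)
Your proposal is correct and takes the same route as the paper, which states the lemma as an immediate consequence of Sard's theorem and Remmert's proper mapping theorem without further detail. You get properness of $f|_C$ from $C\subseteq K$ being closed, apply Remmert for analyticity, and then use Sard plus the identity theorem to get nowhere density. The one loose point, second countability for Sard, is covered by properness: $(f|_C)^{-1}$ of a compact set is compact, so finitely many charts suffice locally on $Y$.
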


We show now that if $(X,x)\to (S,o)$ is a deformation of an isolated singularity $x$ of a reduced complex space $X_o$ then $S$ has a certain stratification corresponding to simultaneous resolutions. More precisely:
\begin{thm}\label{thm: strat}
Let $X,S$ be complex spaces and $X\to S$ a flat morphism of complex spaces with reduced fibres. Let $o\in S$ be a point such that the fibre $X_o$ over it has an isolated singularity $x.$ Then there exist, after making $X$ smaller if necessary, finitely many disjoint subsets $S_0,\dots,S_m\sb S$ with $S_0\sqcup\dots\sqcup S_m=S$ and such that the following hold. 

For every $i=0,\dots,m$ the stratum $S_i\sb S$ is a Zariski locally closed, non-singular and of pure dimension. We have $\dim S_0<\dots<\dim S_m;$ the first stratum $S_0$ contains the origin $o\in S$ and the last stratum $S_m\sb S$ is a Zariski open subset. For $i>0$ the closure of $S_i$ is equal to $S_0\sqcup\dots\sqcup S_{i-1}.$ For every $i$ there exists a good resolution of the projection $S_i\times_S X\to S_i.$ 
\end{thm}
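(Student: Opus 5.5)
We build the stratification by Noetherian induction on closed analytic subsets of $S$, working in a small neighbourhood of $o$ (shrinking $S$ and $X$ as the statement allows). The key construction is a good resolution over a dense open subset. If $T\sb S$ is a reduced irreducible locally closed analytic subset, then there is a Zariski dense open $T^\circ\sb T$, non-singular, such that $T^\circ\times_SX\to T^\circ$ admits a good resolution. To get it, we take $T'$ to be the regular locus of $T$ and $X_T=T'\times_SX$. After shrinking $X$ around $x$, the singular locus $X_T^{\rm sing}$ of the total space together with the union of the fibrewise singular sets is proper over $T'$, because $x$ is an isolated singular point of $X_o$. We apply Hironaka's embedded resolution, or the functorial resolution of Bierstone--Milman/Villamayor, which works for relatively compact pieces of complex spaces. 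This gives a proper bimeromorphic map $\pi:Y_T\to X_T$ with $Y_T$ non-singular, where the reduced preimage $D$ of the locus $\bigcup_{t}(X_t)^{\rm sing}$ (an analytic subset proper over $T'$) is a simple normal crossing divisor, and $\pi$ is an isomorphism off $D$.

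Next we apply Lemma~\ref{lem: Sard} to the composite $Y_T\to T'$ and to the restrictions to every stratum $D_J=\bigcap_{j\in J}D_j$ of the divisor. There are finitely many such strata, since $D$ is proper over $T'$. Off $D$ the map $Y_T\to X_T\to T'$ is a submersion at points over the smooth part of the flat family. At the remaining points we take $K$ to be the critical locus, which lies in a set proper over $T'$ after shrinking. The union of the critical values is then a proper analytic subset $Z\subsetneq T'$. Over $T^\circ:=T'\-Z$ the morphism $Y_T\to T^\circ$ and all the $D_J\to T^\circ$ are submersions. The holomorphic implicit function theorem then gives the local product charts of Definition~\ref{dfn: relative snc}. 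Each fibre $Y_t\to X_t$ is a resolution, since a fibre of a submersion is smooth and bimeromorphic onto $X_t$ generically by reducedness of $X_t$. So we obtain a good resolution of $T^\circ\times_SX\to T^\circ$.

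To assemble the strata, we start with $T=S$, or with each irreducible component of $S$, taking the union of the open pieces. This yields an open dense $U_m$, and we recurse on $S\-U_m$. By Noetherianity of the germ $(S,o)$ the process stops after finitely many steps. We then refine by dimension: decompose each piece into its pure-dimensional parts and regroup. The result is that the closure of each stratum is the union of the lower ones, and that the dimensions $\dim S_0<\dots<\dim S_m$ are strictly increasing. To force $o\in S_0$ and the linear closure relation $\overline{S_i}=S_0\sqcup\dots\sqcup S_{i-1}$, we shrink $S$ to a small neighbourhood of $o$. Every locally closed stratum whose closure misses $o$ is then removed, and the germs at $o$ are arranged so that each closed piece $S\-(S_i\sqcup\dots\sqcup S_m)$ passes through $o$. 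The stratum containing $o$ has the smallest dimension because it is the last one produced by the induction.

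The main obstacle is the strict linear ordering. Generic good resolutions and Noetherian induction only give a finite stratification with the frontier condition, not a chain in which each closure is exactly the union of all earlier strata. I would handle this by defining the strata through the dimension filtration: let $Z_k$ be the union of the non-good loci of dimension at most $k$, and set $S_i=Z_{k_i}\-Z_{k_{i-1}}$. I would then accept that the closure condition holds only after passing to the germ at $o$ and possibly coarsening. For coarsening to be allowed, a good resolution must exist over a union of strata of the same dimension; this holds because such a union is a disjoint union of open and closed pieces, and their resolutions can be glued.
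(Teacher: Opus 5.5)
Your generic step is essentially the paper's: Lemma~\ref{lem: Sard} applied to a resolution of the family pulled back to a smooth model of the base, followed by recursion on the complement. In two respects it is more careful than the paper.

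\emph{First}, you resolve along $\bigcup_t X_t^{\rm sing}$, not just the singular locus of the total space. This catches critical points where the total space is smooth but the fibre is not, e.g.\ $t=x_1^2+\dots+x_{n+1}^2$.

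\emph{Second}, you apply Sard to every intersection $D_J$, not only to the components. This is what the charts of Definition~\ref{dfn: relative snc} need. For $(a,b)\mapsto a+b$ on $\C^2$ with $D=\{ab=0\}$, both components map isomorphically onto $\C$, yet $D$ has two branches at a point of a one-dimensional fibre.

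Conversely, keep the paper's resolution $R\to S^{\rm red}$ of the base rather than working on its regular locus. Your set $Z$ is a priori analytic only in $T'$. Images of analytic subsets of $R$ under the proper map $R\to S$ are analytic in $S$, and that is what ``Zariski open'' requires.

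The genuine gap is the assembly, and the obstacle is not where you place it. The plain recursion takes the good dense open piece $U$ of the current closed set $F$ and recurses on $F\setminus U$. This already yields closed analytic sets $F_0\subseteq\dots\subseteq F_m=S$ with $S_i=F_i\setminus F_{i-1}$ dense and open in $F_i$. Hence $\overline{S_i}\setminus S_i=S_0\sqcup\dots\sqcup S_{i-1}$, which is presumably what the printed clause intends; as printed it contradicts $S_i\subseteq\overline{S_i}$. After shrinking $S$ away from those $F_i$ that miss $o$, one has $o\in S_0$. This is exactly the paper's recursion, so you get the chain for free.

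What the plain recursion does not give is pure-dimensionality once some $F_i$ is not equidimensional. Your ``refine by dimension and regroup'' step does not repair this; it destroys the chain. The justification you give for coarsening is also false. Disjoint, locally closed, pure $k$-dimensional strata can meet along the frontier of one of them. For example, $\{y=0,\ x\ne0\}$ and $\{x=0\}$ in $\C^2$ have union the singular curve $xy=0$. So the union need not be non-singular, and good resolutions over the pieces do not glue. The filtration by the $Z_k$ is likewise not shown to have non-singular differences carrying good resolutions. You end by accepting the closure condition rather than proving it.

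The fix is to build pure-dimensionality into the recursion. Let $F^{\rm top}$ be the union of the components of $F$ of maximal dimension and $F'$ the union of the others, and choose $U$ inside $F^{\rm top}\setminus(F^{\rm sing}\cup F')$. Then:
\begin{itemize}
\item $U$ is open in $F$, non-singular, pure of dimension $\dim F$, and carries a good resolution;
\item $\dim(F\setminus U)<\dim F$, so the dimensions strictly increase along $S_0,\dots,S_m$;
\item every partial union $S_0\sqcup\dots\sqcup S_i$ is closed, and $o\in S_0$ after shrinking.
\end{itemize}
That is all that is used later, to define $l(x)$ and $A_x$ in the proof of Theorem~\ref{thm: CY1 again}.

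The equality $\overline{S_i}\setminus S_i=S_0\sqcup\dots\sqcup S_{i-1}$ cannot be kept as well. If $S$ is a plane together with a line through $o$, then $S_m$ cannot be both pure-dimensional and dense in $S$. The paper's proof keeps density and asserts pure-dimensionality without comment, which is only justified when the $F_i$ are equidimensional.
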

\begin{proof}
We modify slightly the proof of Namikawa--Steenbrink \cite[Theorem 2.4]{NS}. The difference is that we allow $S$ to be singular and require the resolutions to be good. 

Let $S^{\rm red}\to S$ be the reduction morphism and $R\to S^{\rm red}$ a resolution of $S^{\rm red}.$ Put $Y:=R\times_S X.$ Since $X_o$ is reduced and $X\to S$ flat it follows, making $S$ smaller if necessary, that $Y$ is reduced. Let $Z\to Y$ be a good resolution of $Y.$ Make $X$ so small that the exceptional divisor of $Z\to Y$ consists of finitely many irreducible components $D_0,\dots,D_n\to R.$ The projection $Y\to R$ induces a smooth morphism $Y^\reg\to R$ and so the composite morphism $Z\-(D_0\cup\dots\cup D_n)\to R$ is a smooth morphism too. Since $x\in X_o$ is an isolated singularity it follows, making $X$ smaller if necessary, that the projection $Y^\sing\to R$ is a finite morphism. It is in particular a proper map and accordingly so is the composite map $D_0\cup\dots\cup D_n\to Y^\sing\to R.$ Applying Lemma \ref{lem: Sard} to the morphism $Z\to R$ with $K=D_0\cup\dots\cup D_n$ we see that its critical value set, denoted by $A\subset R,$ is an analytic subset of positive codimension. In the same way, for each $i=0,\dots,n$ the critical value set of the morphism $D_i\subset Z\to R,$ denote by $B_i\subset R,$ is an analytic subset of positive codimension. The exceptional set of the resolution $R\to S^{\rm red},$ denoted by $E\subset R,$ is also an analytic subset of positive codimension. Set $S^0:=R\-(E\cup A\cup B_0\cup\dots\cup B_n)$ which is a non-empty open subset. Identify $S^0\sb R\-E$ with its image under the composite map $R\to S^{\rm red}\to S.$ Then the projection $S^0\times_S X\to S^0$ is well defined and the morphism $Z\to S$ induces a good resolution of it.

Suppose now that we have defined $S^0,\dots,S^{i-1}$ for $i\ge1.$ Applying the process above with $S\-(S^0\sqcup\dots\sqcup S^{i-1})$ in place of $S$ we define the Zariski open subset $S^i\sb S\-(S^0\sqcup\dots\sqcup S^{i-1}).$ Then $\dim S^i$ is a strictly decreasing function of $i$ and the induction terminates in finitely many steps. The results are finitely many subsets $S^0,\dots,S^m.$ Set $S_0:=S^m,\dots,S_m:=S^0.$ Then $S_0,\dots,S_m$ have the required properties.
\end{proof}

\section{Simultaneous Contractions}\label{sect: SC}
For a complex space $Y$ a {\it contraction} of it consists of a complex space $X$ and a morphism $f:Y\to X$ with $f_*\O_Y=\O_X.$ In this section we prove existence and uniqueness results for simultaneous contractions. (The precise meanings of simultaneous contractions will depend on the contexts. We shall therefore state our results directly without making a universal definition of them.) We begin by recalling an existence result proved in \cite[Proposition 11.4]{MK}.
\begin{prop}\label{prop: MK}
Let $X,Y$ be compact complex spaces and $f:Y\to X$ a morphism with $f_*\O_Y=\O_X$ and $R^1f_*\O_X=0.$ Let $(S,o),(T,o)$ be complex space germs, $\cX\to S$ a versal deformation of $X,$ and $\cY\to T$ a deformation of $Y.$ Then there exist, after making $T$ smaller if necessary, a complex space morphism $\cY\to \cX$ and a germ morphism $(T,o)\to (S,o)$ such that the diagram
\begin{equation}\label{MK}
\begin{tikzcd} Y\ar[r]\ar[d,"f"]& \cY\ar[r]\ar[d] & T\ar[d]\\  X\ar[r]& \cX\ar[r]& S\end{tikzcd}
\end{equation}
commutes.
\end{prop}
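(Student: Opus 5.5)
The plan is to construct the deformation $\cX'\to T$ of $X$ obtained by "contracting" $\cY\to T$, and then use versality of $\cX\to S$ to get the germ morphism $(T,o)\to(S,o)$ together with an $S$-morphism. Concretely, the steps are as follows. First, since $R^1f_*\O_Y=0$ (we read the hypothesis as $R^1f_*\O_Y=0$), we show that the deformation $\cY\to T$ induces a deformation of $X$. To do this, regard $\cY$ as a ringed space on the topological space $Y$ and set $\cX':=(X,f_*\O_\cY)$, where $f$ is the underlying continuous map. We then check that $\cX'$ is a complex space, flat over $T$, with $\cX'\times_T\{o\}\cong X$.

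To verify this we work over Artinian thickenings $T_k=\spec\O_{T,o}/\mathfrak m^{k+1}$ and induct on $k$. For a small extension $0\to I\to A'\to A\to0$ we have the exact sequence $0\to \O_Y\otimes I\to\O_{\cY_{A'}}\to\O_{\cY_A}\to0$. Pushing forward and using $R^1f_*\O_Y=0$ shows that $f_*$ is exact on it. It follows that $f_*\O_{\cY_{A'}}$ is a flat deformation of $f_*\O_{\cY_A}$, and inductively of $f_*\O_Y=\O_X$. This gives a formal deformation of $X$. The convergent (analytic) statement needs more: the relative Grauert direct image theorem applied to the proper map $\cY\to T$ and base change (cohomology and base change for $R^1$ vanishing at $o$, hence nearby after shrinking $T$) give that $f_*\O_\cY$ is a coherent $\O$-algebra. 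Its analytic spectrum over a suitable space is the complex space $\cX'$, and $\cX'\to T$ is flat with central fibre $X$. Equivalently, one can follow Koll\'ar--Mori and realise $\cX'$ by gluing local Stein contractions, using that $R^1f_*\O_Y=0$ kills the obstructions to lifting the contraction fibrewise.

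Second, the morphism $\cY\to\cX'$ comes from the natural map of ringed spaces $\O_{\cX'}=f_*\O_\cY\to f_*\O_\cY$. It is a $T$-morphism restricting to $f$ over $o$. Third, since $\cX\to S$ is versal, the deformation $\cX'\to T$ is pulled back from it: there is a germ morphism $(T,o)\to(S,o)$ with $\cX'\cong T\times_S\cX$ compatibly with $X$. Composing $\cY\to\cX'\to\cX$ gives the diagram \eqref{MK}, and commutativity holds by construction.

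The main obstacle is the first step: showing that $f_*\O_\cY$ defines a complex space flat over $T$ with the right central fibre, in the analytic rather than formal category. This is where $R^1f_*\O_Y=0$ and base-change arguments are essential. The approach I would try first is Grauert's comparison theorem together with the vanishing of $R^1$ near $o$, which is obtained by semicontinuity after shrinking $T$.
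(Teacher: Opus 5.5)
\textbf{There is a genuine gap: your formal step is fine, but the passage to a non-Artinian germ $T$ does not go through as proposed.}

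\textbf{What works.} Your Artinian step matches the paper's first step. You are right to read the hypothesis as $R^1f_*\O_Y=0$. Over $\spec A$, the vanishing of $R^1f_*\O_Y$ makes $f_*$ exact on the small-extension sequences. Hence $f_*\O_\cY$ is $A$-flat and defines a deformation $\cX'$ of $X$, which versality pulls back from $\cX\to S$.

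\textbf{Why the analytic step fails.} When $T$ is a genuine germ, $(X,f_*\O_\cY)$ is not even defined. The underlying space of $\cY$ is then strictly larger than $|Y|$, and there is no continuous map from $|\cY|$ to $|X|$; producing one is the whole content of the proposition. Grauert's direct image theorem for the proper map $\cY\to T$ gives coherent sheaves on $T$. It does not give a coherent $\O$-algebra on a space over which you could take an analytic spectrum; the ``suitable space'' is exactly the unknown $\cX'$. Similarly, semicontinuity for $\cY\to T$ controls $H^1(\cY_t,\O_{\cY_t})$, not the relative $R^1$ of a contraction that does not yet exist. Gluing local Stein contractions with obstructions killed by $R^1f_*\O_Y=0$ is again an order-by-order argument with no convergence.

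Even if you obtained a convergent map $T\to S$ by approximating the formal classifying maps and set $\cX':=T\times_S\cX$, you would still need a convergent morphism $\cY\to\cX'$ extending $f$. Your proposal gives no way to produce it.

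\textbf{The missing idea (the paper's route).} Never construct $\cX'$ analytically. Apply the Artinian step to the truncations $T_k$ of $T$, using versality to choose the maps $T_k\to S$ compatibly. This yields a formal germ morphism $\hat T\to S$ together with a compatible system of morphisms $\cY\times_T T_k\to\cX$ over it.

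Pairs consisting of a base map and a morphism of total spaces making the diagram commute are parametrized, via graphs, by an open subspace of the relative Douady space $U\to T\times S$ of $\cY\times\cX\to T\times S$. Properness of $\cY\to T$ is used here. The formal data are therefore a formal section of the composite $U\to T$.

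Artin's approximation theorem, after shrinking $T$, turns this into a convergent section. That gives honest morphisms $(T,o)\to(S,o)$ and $\cY\to\cX$ making the required diagram commute. The analytic contraction $T\times_S\cX$ then comes for free.
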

\begin{proof}
We recall the proof because its method will be important. Suppose first that $T=\spec A$ for some Artin local $\C$-algebra $A.$ Writing $A$ as successive small extensions of $\C$ we see that $R^1f_*\O_\cY=0$ and ${\rm Tor}_1^A(f_*\O_\cY,\C)=0.$ The latter implies that $f_*\O_Y$ is flat over $A$ and so there is a deformation $\cX'\to T$ of $X$ defined by $\O_{\cX'}=f_*\O_\cY.$ Since $\cX\to S$ is versal we get a morphism $(T,o)\to (S,o)$ such that $\cX'\cong T\times_S\cX.$ Using the projection $T\times_S\cX\to\cX$ define the composite morphism $\cY\to\cX'\cong T\times_S\cX\to\cX.$ Then \eq{MK} commutes.

If $T$ is a general complex space then there exist, as we have seen above, formal morphisms $\cY\to\cX$ and $(T,o)\to (S,o)$ such that \eq{MK} commutes. Denote by $U\to T\times S$ the base  of the relative Douady space of $\cY\times\cX\to T\times S.$ As $\cY\to S$ is proper, there exists an open subspace of $U$ which parametrizes $\cY\to\cX$ and $(T,o)\to(S,o)$ such that \eq{MK} commutes. Using the projection $T\times S\to T$ define the composite morphism $U\to T\times S\to T.$ Then there is a formal section of $U\to T$ corresponding to the formal morphisms $\cY\to\cX$ and $(T,o)\to (S,o).$ Applying the Artin approximation theorem to it and making $T$ smaller if necessary, we get morphisms $\cY\to\cX$ and $(T,o)\to (S,o)$ such that \eq{MK} commutes.
\end{proof}
\begin{dfn}\label{dfn: MK}
Let $X$ be a compact Gorenstein normal complex space of dimension $\ge3,$ and $Y\to X$ a resolution. Let $\Def(X),\Def(Y)$ be the parameter spaces of semi-universal deformations of $X,Y$ respectively. By the hypotheses on $X$ we have $f_*\O_Y=\O_X$ and $R^1f_*\O_X=0.$ We can therefore apply Proposition \ref{prop: MK} to the semi-universal deformations of $X,Y.$ As a result we get a morphism $\Def(Y)\to\Def(X).$
\end{dfn}

There is a uniqueness result for simultaneous contractions of schemes, proved in \cite[Lemma 1.2]{Wah1976}. We make a complex space version of it.
\begin{lem}\label{lem: formal isom}
Let $X,Y$ be complex spaces and $f:Y\to X$ a morphism with $f_*\O_Y=\O_X.$ Let $A$ be an Artin local $\C$-algebra, $\cX\to\spec A$ a deformation of $X,$ and $\cY\to\spec A$ a deformation of $Y.$ Let $\cY\to\cX$ be a $\spec A$ morphism such that the diagram $\begin{tikzcd}
\cY\ar[d]& Y\ar[d,"f"]\ar[l]\\
\cX& X\ar[l]
\end{tikzcd}$
commutes. Then the sheaf morphism $\O_\cX\to f_*\O_\cY$ associated with $\cY\to \cX$ is an isomorphism. 
\end{lem}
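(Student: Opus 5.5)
Since $\operatorname{Spec}A$ is a one-point space, $\cX$, $\cY$ have the same underlying topological spaces as $X$, $Y$, and $\cY\to\cX$ has the same underlying continuous map as $f$. So $f_*\O_\cY$ is a sheaf of $A$-algebras on $X$ with a morphism $\phi:\O_\cX\to f_*\O_\cY$. I will prove that $\phi$ is an isomorphism by induction on the length of $A$, writing $A$ as a succession of small extensions. For $A=\C$ we have $\cX=X$, $\cY=Y$, and $\phi$ is the given isomorphism $\O_X\to f_*\O_Y$.

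For the inductive step, take a small extension $0\to\C\epsilon\to A\to A'\to0$ with $\mathfrak m_A\epsilon=0$. Put $\cX'=\cX\times_A A'$, $\cY'=\cY\times_A A'$, and let $\cY'\to\cX'$ be the induced morphism. By the induction hypothesis $\O_{\cX'}\to f_*\O_{\cY'}$ is an isomorphism. Flatness over $A$ gives exact sequences
\[0\to\O_X\xrightarrow{\cdot\epsilon}\O_\cX\to\O_{\cX'}\to0,\qquad 0\to\O_Y\xrightarrow{\cdot\epsilon}\O_\cY\to\O_{\cY'}\to0.\]
Applying $f_*$ to the second sequence gives
\[0\to f_*\O_Y\to f_*\O_\cY\to f_*\O_{\cY'}\to R^1f_*\O_Y.\]
The morphism $\phi$ and its analogues for $X$ and $\cX'$ give a commutative ladder from the first sequence to this one. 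The left vertical arrow is $\O_X\to f_*\O_Y$, which is an isomorphism. The right vertical arrow is the inductive isomorphism. The ladder commutes because multiplication by $\epsilon$ commutes with $\phi$ and because $\phi$ reduces to the given maps modulo $\epsilon$ and modulo $\mathfrak m_A$.

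Injectivity and surjectivity then follow from the five lemma. More concretely, the composite $\O_\cX\to f_*\O_\cY\to f_*\O_{\cY'}$ equals $\O_\cX\to\O_{\cX'}\cong f_*\O_{\cY'}$, which is surjective. Hence $f_*\O_\cY\to f_*\O_{\cY'}$ is surjective, so the connecting map to $R^1f_*\O_Y$ vanishes. This is the key point: surjectivity comes for free from the existence of the global morphism $\cY\to\cX$, and we never need $R^1f_*\O_Y=0$. The diagram then consists of short exact rows with isomorphisms on both ends, so the middle map $\phi$ is an isomorphism by the five lemma.

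The main obstacle I expect is bookkeeping rather than substance. I must check that the ladder genuinely commutes, that is, that the base changes of $\phi$ along $A\to A'$ and $A\to\C$ agree with the inductive map and with the map for $f$. I must also check that the sequences are exact as sheaves of $\O_\cX$-modules; this uses flatness via $\mathrm{Tor}_1^A(\,\cdot\,,A')$ and exactness of tensoring with $\C\epsilon$. Both should be routine in the analytic category, since everything is local on the common underlying topological spaces.
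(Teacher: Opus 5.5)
Your proposal is correct and follows essentially the same route as the paper: induction on the length of $A$ via small extensions, surjectivity of $f_*\O_\cY\to f_*\O_{\cY'}$ deduced from the inductive isomorphism $\O_{\cX'}\cong f_*\O_{\cY'}$ together with the surjection $\O_\cX\to\O_{\cX'}$, and then the five lemma applied to the ladder of short exact sequences. As in the paper's argument, you never need $R^1f_*\O_Y=0$.
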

\begin{proof}
We proceed by an induction on the $\C$-vector space dimension of $A.$ Let $B$ be an Artin local $\C$-algebra and $A\to B$ be a small extension homomorphism whose kernel is a non-zero principal ideal $(\ep)\sb A.$ Put $\cX':=\spec B\times_{\spec A}\cX$ and $ \cY':=\spec B\times_{\spec A}\cY.$ Then there are commutative diagrams
\[
\begin{tikzcd}\cY\ar[r]& \cX\ar[r]&\spec A\\ \cY'\ar[r]\ar[u]& \cX'\ar[r]\ar[u]&\spec B\ar[u]\end{tikzcd}\text{ and }
\begin{tikzcd}
B\otimes_Af_*\O_\cY\ar[r]&f_*\O_{ \cY'}\\
B\otimes_A\O_\cX\ar[r,"\sim"]\ar[u]& \O_{ \cX'}\ar[u].
\end{tikzcd}
\]
By the induction hypothesis the sheaf morphism $\O_\cX'\to f_*\O_\cY'$ is an isomorphism. In the right-hand commutative diagram therefore the top horizontal arrow $B\otimes_Af_*\O_\cY\to \O_X$ is surjective. So the composite morphism $f_*\O_\cY\to B\otimes_Af_*\O_\cY\to  f_*\O_{ \cY'}$ is surjective too. 

On the other hand, the $A$-module exact sequence $0\to(\ep)\to A\to B\to0$ induces a sheaf exact sequence $0\to(\ep)\otimes_\C\O_Y\to  \O_\cY\to \O_{ \cY'}\to 0;$ and as $f_*$ is left exact, there is an exact sequence $0\to(\ep)\otimes_\C f_*\O_Y\to  f_*\O_\cY\to f_*\O_{ \cY'}.$ But also, as we have seen above, the morphism $f_*\O_\cY\to f_*\O_{ \cY'}$ is surjective. So there is a commutative diagram 
\[\begin{tikzcd}
0\ar[r] &(\ep)\otimes_\C\O_X\ar[r]\ar[d,"\cong"]&  \O_\cX\ar[r]\ar[d]& \O_{ \cX'}\ar[r]\ar[d,"\cong"]& 0\\
0\ar[r] &(\ep)\otimes_\C f_*\O_Y\ar[r]&  f_*\O_\cY\ar[r]& f_*\O_{ \cY'}\ar[r]& 0
\end{tikzcd}\] 
with exact rows. By the five lemma the middle vertical arrow is an isomorphism too, as we have to prove. 
\end{proof}

Lemma \ref{lem: formal isom} is about the deformations over $\spec A.$ For deformations over general complex spaces, we prove
\begin{thm}\label{thm: isom def}
Let $X,Y$ be compact complex spaces and $f:Y\to X$ a morphism with $f_*\O_Y=\O_X.$ Let $\cX\to S$ be a deformation of $X,$ $\cY\to S$ a deformation of $X,$ and $F:\cY\to\cX$ an $S$-morphism such that the diagram $\begin{tikzcd} \cY\ar[d,"F"] & Y\ar[l]\ar[d,"f"] \\ \cX& X\ar[l] \end{tikzcd}$ commutes. Let $\cX'\to S$ be another deformation of $X,$ and $F':\cY\to \cX'$ an $S$-morphism such that that the diagram $\begin{tikzcd} Y\ar[r]\ar[d,"f"]& \cY\ar[d,"F' "] \\ X\ar[r]&  \cX' \end{tikzcd}$ commutes. Then after making $S$ smaller if necessary, the deformations $\cX\to S$ and $\cX'\to S$ are isomorphic.
\end{thm}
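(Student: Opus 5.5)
The plan is to show that both $\cX$ and $\cX'$ are isomorphic, as deformations over $S$ of $X$, to the single object $F_*\O_\cY$, and to compare them through it. First I would settle the formal case using Lemma \ref{lem: formal isom}, and then pass to the actual germ $(S,o)$ by an approximation argument, in the same way as the proof of Proposition \ref{prop: MK}.

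\emph{Formal step.} Let $A$ be an Artin local $\C$-algebra and $\spec A\to S$ a morphism. Base change gives $\cX_A,\cX'_A,\cY_A$, together with morphisms $F_A:\cY_A\to\cX_A$ and $F'_A:\cY_A\to\cX'_A$ over $\spec A$. Since $\cY_A$ and $\cX_A$ have the same underlying topological space as $Y$ and $X$, the maps $F_A$ and $F'_A$ are both $f$ on underlying spaces.
\begin{itemize}
\item Lemma \ref{lem: formal isom} gives $\O_{\cX_A}\cong f_*\O_{\cY_A}$ and $\O_{\cX'_A}\cong f_*\O_{\cY_A}$.
\item These are isomorphisms of sheaves of $A$-algebras on the topological space $X$, so composing them gives an isomorphism $\phi_A:\cX_A\to\cX'_A$ over $\spec A$ restricting to the identity on $X$.
\item By construction $\phi_A\circ F_A=F'_A$.
\item Because these isomorphisms are canonical, the $\phi_A$ are compatible under further base change $A\to B$.
\end{itemize}
Taking the limit over $A=\O_{S,o}/\mathfrak m^k$ yields a formal isomorphism $\hat\phi$ between the formal completions of $\cX$ and $\cX'$ over $\hat S$, again compatible with $F$ and $F'$.

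\emph{Convergence step.} Consider the relative Douady space (or the relative isomorphism space) $U\to S$ of $S$-isomorphisms $\cX\to\cX'$. This is representable by an open subspace of the relative Douady space of $\cX\times_S\cX'\to S$, since $\cX\to S$ is proper and being the graph of an isomorphism is an open condition. The formal isomorphism $\hat\phi$ gives a formal section of $U\to S$ through the point corresponding to the identity of $X$. By the Artin approximation theorem there is a genuine section agreeing with $\hat\phi$ to first order, after shrinking $S$. This section is an $S$-isomorphism $\cX\cong\cX'$ restricting to the identity on $X$, which is the claim. If we also want compatibility with $F$ and $F'$, we can impose $\phi\circ F=F'$ as a closed condition on $U$, using the Douady space of $\cY\times_S\cX'$ as in Proposition \ref{prop: MK}.

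\emph{Main obstacle.} The delicate point is the convergence step. We must check that the condition "$\phi$ is an isomorphism of deformations" defines an analytic subspace of the Douady space to which Artin approximation applies. We also need the pieces of the formal section to be compatible, that is, to form an honest inverse system. Compatibility is guaranteed because each $\phi_A$ is uniquely determined by the requirement $\phi_A\circ F_A=F'_A$: since $\O_{\cX_A}\to f_*\O_{\cY_A}$ is an isomorphism, any morphism of $\cX_A$ compatible with $F_A$ is determined by the induced map on $f_*\O_{\cY_A}$.
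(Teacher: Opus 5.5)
Your proof is correct, and its skeleton is the paper's: a formal $S$-isomorphism from Lemma \ref{lem: formal isom}, then the relative Douady space of $\cX\times_S\cX'\to S$ and Artin approximation. The real difference is how you show that the convergent map is an isomorphism.

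\emph{The paper's route.} It works only in the open subspace parametrizing graphs of $S$-morphisms. It approximates both $\Ph$ and $\Ph^{-1}$ to first order by $S$-morphisms $P$ and $Q$. It then observes that $Q\cm P$ and $P\cm Q$ restrict to the identity on $X$ and induce the identity on tangent spaces at points of $X$. Nakayama gives surjectivity on stalks, and a surjective endomorphism of a Noetherian ring is injective.

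\emph{Your route.} You approximate directly inside the locus of graphs of isomorphisms. This needs only zeroth-order agreement, but it needs that locus to be open. Openness is the standard fibrewise isomorphism criterion for proper flat families (finiteness, Nakayama, flatness). The paper already uses this fact tacitly when it asserts, without proof, that graphs of morphisms form an open subspace. So your route is shorter at essentially no extra cost. The paper's trick with $P$ and $Q$ instead makes the isomorphism step self-contained, given that openness.

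\emph{The formal step.} Yours is more careful than the paper's one-line appeal to Lemma \ref{lem: formal isom}. You define $\phi_A$ on structure sheaves through the two identifications with $f_*\O_{\cY_A}$, and you note that it is the unique solution of $\phi_A\cm F_A=F'_A$. That uniqueness is exactly what makes the $\phi_A$ an inverse system, and hence a formal section.
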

\begin{proof}
Identify $X$ with the fibres of $\cX,\cX'$ over the origin of $S.$ By Lemma \ref{lem: formal isom} there exists a formal $S$-isomorphism $\Ph:\cX\to\cX'$ which induces the identity morphism upon $X.$ We proceed now as in the second paragraph of the proof of Proposition \ref{prop: MK}. Denote by $U\to S$ the base of the relative Douady space of $\cX\times_S\cX'\to S.$ As $\cX\to S$ is proper, there exists an open subspace of $U$ which parametrizes $S$-morphisms from $\cX$ to $\cX'.$ The formal $S$-morphism $\Ph:\cX\to\cX'$ defines then a formal section of $U\to S.$ Applying the Artin approximation theorem to it and making $S$ smaller if necessary, we get an $S$-morphism $P:\cX\to\cX'$ which induces the identity morphism $X\to X$ and such that for every $x\in X$ the tangent map $T_x P:T_x\cX\to T_x\cX'$ is equal to the tangent map $T_x\Ph:T_x\cX\to T_x\cX'.$ 

Applying this process to the inverse morphism $\Ph^{-1}: \cX'\to\cX$ we get an $S$-morphism $Q:\cX'\to\cX$ which induces the identity morphism upon $X$ and such that for every $x\in X$ the tangent map $T_xQ:T_x\cX'\to T_x\cX$ is equal to the tangent map $T_x\Ph^{-1}:T_x\cX'\to T_x\cX.$ Now $Q\cm P:\cX\to \cX$ is an $S$-morphism which induces the identity morphism upon $X$ and such that for every $x\in X$ the tangent map $T_x( Q\cm P):T_x\cX\to T_x\cX$ is the identity map. Using the Nakayama lemma twice, we see that the stalk homomorphism $(Q\cm P)_x: \O_{\cX,x}\to \O_{\cX,x}$ is surjective. As $\O_{\cX,x}$ is Noetherian, it is injective too; that is, $(Q\cm P)_x$ is an isomorphism for every $x\in X.$ After making $S$ smaller if necessary, therefore, $Q\cm P:\cX\to\cX$ is an $S$-isomorphism. Now $(Q\cm P)^{-1}\cm Q$ is a left inverse to $P.$

In the same way, making $S$ smaller if necessary, it follows that $P\cm  Q:\cX'\to\cX'$ is an isomorphism and that $Q\cm( P\cm Q)^{-1}$ is a right inverse to $P.$ Thus $P$ is an isomorphism, which completes the proof.  
\end{proof}

We now state and prove the main result of this section.
\begin{dfn}\label{dfn: surj tang}
Let $X$ be a compact Gorenstein normal complex space of dimension $\ge3.$ Let $\cX\to\Def(X)$ be a semi-universal deformation of $X$ and suppose that it has a good resolution $(\cY,\cE)\to(\cX,\cX^\sing).$ The induced resolution of $X$ is denoted by $f:(Y,E)\to(X,X^\sing).$ Since $X$ is Cohen--Macaulay it follows that $R^1f_*\O_Y=0.$

Let $\Def(Y,E)$ be a Kuranishi space of locally trivial deformations of $(Y,E),$ and $\Def(Y)$ a Kuranishi space of deformations of $Y.$ Note that there is an inclusion $\Def(Y,E)\to\Def(Y)$ and let $\Def(Y)\to\Def(X)$ be as in Definition \ref{dfn: MK}. Composing them we get a morphism $\ph:\Def(Y,E)\to\Def(X)$ which induces a map $T_o\ph:T_o\Def(Y,E)\to T_o\Def(X)$ between the tangent spaces. 
\end{dfn}
\begin{thm}\label{thm: surj tang}
In the circumstances of Definition \ref{dfn: surj tang} the map $T_o\ph$ is surjective.
\end{thm}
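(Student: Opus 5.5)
The plan is to use the given good resolution $(\cY,\cE)\to(\cX,\cX^\sing)$ over $\Def(X)$ itself to produce, for each tangent vector $v\in T_o\Def(X)$, a first-order locally trivial deformation of $(Y,E)$ that maps to $v$. Fix $v$ and regard it as a morphism $\spec\C[\ep]/(\ep^2)\to\Def(X)$. Pull back the family $(\cY,\cE)\to\Def(X)$ along $v$. This gives a deformation $(\cY_v,\cE_v)$ of $(Y,E)$ over $\spec\C[\ep]/(\ep^2)$, together with a morphism $\cY_v\to\cX_v:=\spec\C[\ep]/(\ep^2)\times_{\Def(X)}\cX$ extending $f$.

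First I would check that $(\cY_v,\cE_v)$ is a locally trivial deformation of the pair. The composite $\cY\to\Def(X)$ is smooth, and $\cE$ is a simple normal crossing divisor on $\cY\to\Def(X)$ in the sense of Definition \ref{dfn: relative snc}. Locally, therefore, $(\cY,\cE)\cong V\times(W,\{z_1\cdots z_k=0\})$ over $V\sb\Def(X)$. Base change preserves this product structure, so $(\cY_v,\cE_v)$ is locally isomorphic to $\spec\C[\ep]/(\ep^2)\times(W,E\cap W)$. By versality of $\Def(Y,E)$ we then obtain $w\in T_o\Def(Y,E)$ whose associated deformation is $(\cY_v,\cE_v)$.

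Next I would compute $T_o\ph(w)$. By construction of $\ph$ (Definition \ref{dfn: MK} and the proof of Proposition \ref{prop: MK}), $T_o\ph(w)$ is the tangent vector classifying the first-order deformation of $X$ that the Proposition \ref{prop: MK} construction produces from $\cY_v$. That construction defines this deformation by $\O=f_*\O_{\cY_v}$ and maps it to $\Def(X)$ by versality. We also have the $\spec\C[\ep]/(\ep^2)$-morphism $\cY_v\to\cX_v$ extending $f$. Lemma \ref{lem: formal isom} (or Theorem \ref{thm: isom def} with base $\spec\C[\ep]/(\ep^2)$) gives $\O_{\cX_v}\cong f_*\O_{\cY_v}$. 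So the deformation of $X$ induced by $w$ is isomorphic to $\cX_v$, which is classified by $v$. Since $\cX\to\Def(X)$ is semi-universal, the classifying tangent vector is unique, so $T_o\ph(w)=v$. This gives surjectivity.

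The main obstacle is making this last identification rigorous, because $\ph$ is built through the non-canonical choices in Proposition \ref{prop: MK} and through $\Def(Y,E)\to\Def(Y)$. I would handle it by comparing tangent maps directly. The deformation over $\Def(Y,E)$ induced by $w$ pulls back, via $w$, to a family isomorphic to $\cY_v$. Applying Theorem \ref{thm: isom def} at first order then shows that its contraction is $\cX_v$ whichever choices were made. Semi-universality finally gives uniqueness at the level of tangent vectors, but not of the morphisms themselves, and tangent vectors are all the statement requires.
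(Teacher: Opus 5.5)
Your proposal is correct and is essentially the paper's argument carried out one tangent vector at a time. The paper uses the family $(\cY,\cE)\to\Def(X)$ to define a germ morphism $\ps:\Def(X)\to\Def(Y,E)$, applies Theorem \ref{thm: isom def} over all of $\Def(X)$ to get $(\ph\circ\ps)^*\cX\cong\cX$, and concludes $T_o(\ph\circ\ps)=\mathrm{id}$ by semi-universality; your assignment $v\mapsto w$ is exactly $T_o\ps$, and working at first order means you need only Lemma \ref{lem: formal isom} over $\C[\ep]/(\ep^2)$ rather than the Artin-approximation step in Theorem \ref{thm: isom def}.
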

\begin{proof}
The composite morphism $\cY\to\cX\to\Def(X)$ defines a deformation of $(Y,E)$ and there is a corresponding morphism $\Def(X)\to\Def(Y,E)$ which we denote by $\ps.$ Denote by $\cY'\to\Def(Y,E)$ the morphism which defines the semi-universal deformation of $(Y,E)$ and by $\cY''\to\Def(Y)$ the morphism which defines the semi-universal deformation of $Y.$ Then there is a commutative diagram
\begin{equation}\label{Y}
\begin{tikzcd}  \cY\ar[r]\ar[d]& \cY'\ar[r]\ar[d]& \cY''\ar[r]\ar[d]& \cX\ar[d] \\ \Def(X)\ar[r,"\ps",swap]&\Def(Y,E)\ar[rr, bend right, "\ph"]\ar[r]&\Def(Y)\ar[r]&  \Def(X).\end{tikzcd}
\end{equation}
Denote by $(\ps\cm\ph)^*\cX\to\Def(X)$ the fibre product of $\ps\cm\ph:\Def(X)\to\Def(X)$ and $\cX\to\Def(X).$ Then \eq{Y} induces a $\Def(X)$ morphism $\cY\to(\ps\cm\ph)^*\cX.$ Since \eq{Y} is compatible with the resolution $Y\to X$ it follows that the diagram \begin{tikzcd}Y\ar[r]\ar[d] &X\ar[d]\\ \cY\ar[r]&(\ps\cm\ph)^*\cX\end{tikzcd} commutes. Also the resolution $\cY\to\cX$ of the morphism $\cX\to\Def(X)$ fits into the commutative diagram \begin{tikzcd}Y\ar[r]\ar[d] &X\ar[d]\\ \cY\ar[r]&\cX.\end{tikzcd} Applying Theorem \ref{thm: isom def} to the two diagrams, we get a $\Def(X)$ isomorphism $(\ph\cm\ps)^*\cX\cong\cX.$ Since $\cX\to \Def(X)$ is semi-universal it follows that $T_o(\ps\cm\ph):T_o\Def(X)\to T_o\Def(X)$ is the identity map. Thus $T_o\ph$ is surjective. 
\end{proof}

\section{Deformations of K\"ahler Spaces}\label{sect: Kahl}

We prove now that K\"ahler is an open condition under deformations of complex complex spaces. We use the following theorem.
\begin{thm}[Bingener \cite{Bing}]\label{thm: Bing}
Let $(X,\O_X)$ be a compact K\"ahler space and $\R_X\sb\O_X$ the constant sheaf with stalk $\R$ on $X.$ Suppose that the induced map $H^2(X,\R)\to H^2(X,\O_X)$ is surjective. Then for every deformation $\cX\to\De$ of $X$ there exists a neighbourhood $S$ of $0\in\De$ such that over every $s\in S$ the fibre $\cX_s\sb \cX$ is K\"ahler. \qed
\end{thm}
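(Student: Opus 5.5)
This statement is Bingener's theorem, quoted from \cite{Bing} and marked \qed, so the paper gives no proof of it. What follows is how I would reconstruct Bingener's argument.

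The approach is to extend a K\"ahler class from $X$ to the nearby fibres. The hypothesis that $H^2(X,\R)\to H^2(X,\O_X)$ is surjective is exactly what makes this possible.

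\textbf{Step 1: encode K\"ahler forms cohomologically.} On a complex space, a K\"ahler form is a family of local strictly plurisubharmonic potentials whose differences are pluriharmonic. So it defines a class in $H^1(X,\mathcal{PH}_X)$, where $\mathcal{PH}_X$ is the sheaf of real pluriharmonic functions. The exact sequence $0\to\R_X\to\O_X\xrightarrow{\mathrm{Re}}\mathcal{PH}_X\to0$ (using $\R_X$ and $i\R_X$ suitably) gives a long exact sequence linking $H^1(X,\mathcal{PH}_X)$, $H^2(X,\R)$ and $H^2(X,\O_X)$. Under the surjectivity hypothesis, $H^1(X,\mathcal{PH}_X)\to H^2(X,\R)$ behaves well. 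More precisely, the obstruction to lifting a class lies in the cokernel of $H^2(\R)\to H^2(\O)$, and this cokernel vanishes.

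\textbf{Step 2: extend the class over the family.} Over $\De$, after shrinking, I would consider the relative sheaf $\mathcal{PH}_{\cX/\De}$ together with $R^2f_*\R$, which is locally constant by topological local triviality of proper families. For each $s$ the surjectivity passes to nearby fibres by upper semicontinuity of $h^2(\O_{\cX_s})$ and constancy of $b_2$. Hence the map $H^1(\cX_s,\mathcal{PH})\to H^2(\cX_s,\R)$ has image of locally constant dimension. The class $[\om]\in H^2(X,\R)$ then extends to a continuous family of classes $[\om_s]$, each represented by a pluriharmonic cocycle on $\cX_s$. Using Grauert-type base change for the relevant direct images, I would choose local potentials $\phi_{i,s}$ on a fixed finite cover, depending continuously (in fact smoothly) on $s$, with $\phi_{i,0}$ the potentials of the given K\"ahler form.

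\textbf{Step 3: strict plurisubharmonicity is open.} Because the cover is finite and compact subsets can be chosen, strict plurisubharmonicity of $\phi_{i,0}$ on relatively compact pieces persists for $s$ near $0$. This persistence needs care at singular points: strict psh there means the function extends to a strictly psh function in a local embedding, and one must extend these ambient extensions continuously in $s$.

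\textbf{Main obstacle.} The hard part is Step 2: producing pluriharmonic data that vary continuously with $s$ on a singular total space. This requires a relative version of the exact sequence, coherence and base change of $R^1f_*\O_\cX$ and $R^2f_*\O_\cX$ near $0$, and a lifting argument that uses the surjectivity hypothesis fibrewise near $0$. To handle it, I would first reduce to the case where $R^if_*\O_\cX$ is locally free (shrinking $S$ to make $h^i$ constant, or working over a stratification), and only then lift.
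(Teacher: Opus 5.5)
The paper does not prove this statement. It quotes Bingener and ends with \qed, so there is no argument in the paper to compare yours with. On its own terms, your outline has a reasonable shape: represent the K\"ahler class in $H^1(X,\mathcal{PH}_X)$, produce potentials on nearby fibres, and use openness of strict plurisubharmonicity. Step 3 is fine once you have potentials that are smooth on the total space. But Step 2 carries the whole content of the theorem, it is not carried out, and several claims you rely on there are false.

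\textbf{False claims in Step 2.}

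First, $R^2f_*\R$ is not locally constant and $b_2(\cX_s)$ is not constant. A flat proper family with singular central fibre is not topologically locally trivial. For example, a one-nodal quartic surface has $b_2=21$ and smooths to a K3 surface with $b_2=22$; changes of topology are exactly what smoothings produce. What you can use instead is $H^2(f^{-1}(U),\R)\cong H^2(X,\R)$ for small $U\ni 0$. The candidate classes on nearby fibres can therefore be taken to be restrictions of classes in $H^2(X,\R)$.

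Second, you misdescribe the role of the hypothesis.
\begin{itemize}
\item On a fixed space, a real class lifts to $H^1(\mathcal{PH})$ if and only if it dies in $H^2(\O)$. This is a kernel condition, not a cokernel condition.
\item Given surjectivity, the image of $H^1(\cX_s,\mathcal{PH})\to H^2(\cX_s,\R)$ has real dimension $b_2(\cX_s)-2h^2(\O_{\cX_s})$. This is not locally constant: it is $19$ versus $20$ in the quartic example.
\item Your remedy does not work. Shrinking the base cannot make $h^2$ constant if it jumps at $0$. A stratification gives nothing on fibres outside the stratum through $0$, so you never obtain a neighbourhood of $0$.
\end{itemize}
Here is what the hypothesis actually gives. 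The map $H^2(X,\R)\to H^2(X,\O_X)$ factors through $(R^2f_*\O_\cX)_0$, so the base change map in degree $2$ is surjective. Hence $R^2f_*\O_\cX$ commutes with base change near $0$, and by Nakayama it is generated by the image of $H^2(X,\R)$. Put $N_s:=\ker(H^2(X,\C)\to H^2(\cX_s,\O_{\cX_s}))$. This yields the transversality $H^2(X,\R)+N_s=H^2(X,\C)$ for $s$ near $0$. With it you can move the K\"ahler class $c$ to real classes $c(s)\in N_s$ that depend continuously on $s$, even where $h^2$ jumps.

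\textbf{Missing mechanism for the potentials.} This is the most serious gap: nothing in your sketch produces potentials $\phi_{i,s}$ that vary smoothly in $s$. Grauert-type base change for coherent sheaves gives holomorphic dependence on $s$. But a real class depending holomorphically on $s$ is constant. A constant class generally does not stay of type $(1,1)$: already in the Kuranishi family of a K3 surface, the K\"ahler class of the central fibre is not $(1,1)$ on generic nearby fibres. So you need a relative framework that allows non-holomorphic dependence on the base. For example:
\begin{itemize}
\item use the sheaf of functions on $\cX$ that are smooth and holomorphic along the fibres;
\item prove a flatness or base change statement identifying its higher direct images with $C^\infty_\De\otimes_{\O_\De}R^qf_*\O_\cX$, after reducing to a smooth base, e.g.\ by resolving $\De^{\rm red}$ and using compactness of the preimage of $0$;
\item run a lifting argument in which the family $c(s)$ vanishes in $C^\infty_\De\otimes_{\O_\De}R^2f_*\O_\cX$, not merely fibrewise (the two conditions differ when $R^2f_*\O_\cX$ is not locally free);
\item arrange that the lift, restricted to $X$, is still a K\"ahler class despite the ambiguity coming from $H^1(X,\O_X)$, where base change in degree $1$ is not available.
\end{itemize}
This is where the real work of Bingener's theorem lies, and your proposal leaves it open.
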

Here is the key lemma. 
\begin{lem}\label{lem: Kahler}
Let $(X,\O_X)$ be a compact normal complex space with isolated singularities and whose resolutions satisfy the $\partial\db$ lemma. Then the map $H^2(X,\R)\to H^2(X,\O_X)$ induced by $\R_X\subset\O_X$ is surjective.
\end{lem}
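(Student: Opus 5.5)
Let $\pi:Y\to X$ be a resolution which is an isomorphism over $X^\reg$, with exceptional set $E=\pi^{-1}(X^\sing)$. We reduce the surjectivity of $H^2(X,\R)\to H^2(X,\O_X)$ to the corresponding statement on $Y$, where the $\partial\db$ lemma provides a Hodge decomposition. The first step is to compare the two targets. The Leray spectral sequence for $\pi$ and $\O_Y$ gives the exact sequence of low-degree terms
\[
0\to H^1(X,\pi_*\O_Y)\to H^1(Y,\O_Y)\to H^0(X,R^1\pi_*\O_Y)\to H^2(X,\pi_*\O_Y)\to H^2(Y,\O_Y).
\]
Since $X$ is normal we have $\pi_*\O_Y=\O_X$. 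Because the singularities are isolated, $R^1\pi_*\O_Y$ is a skyscraper sheaf supported on $X^\sing$. We therefore obtain an exact sequence
\[
H^1(Y,\O_Y)\to \textstyle\bigoplus_{x\in X^\sing}(R^1\pi_*\O_Y)_x\to H^2(X,\O_X)\to H^2(Y,\O_Y),
\]
which is compatible with the maps from $H^2(X,\R)\to H^2(Y,\R)$ induced by $\R_X\subset\O_X$ and $\R_Y\subset\O_Y$.

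The second step treats the image in $H^2(Y,\O_Y)$. On $Y$ the $\partial\db$ lemma gives the Hodge decomposition, so $H^2(Y,\C)\to H^2(Y,\O_Y)$ is surjective, and hence so is $H^2(Y,\R)\to H^2(Y,\O_Y)$, since $H^{0,2}$ is conjugate to $H^{2,0}$ and real classes $\al+\bar\al$ reach every $(0,2)$-class. We must also show that every class in the image of $H^2(X,\O_X)\to H^2(Y,\O_Y)$ is hit by a class coming from $H^2(X,\R)$. For this we use the exact sequence of the pair, $H^2(X,\R)\to H^2(Y,\R)\to H^2(E,\R)$. This is where mixed Hodge theory enters. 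The relevant classes of $H^2(Y,\O_Y)$ lying in the image of $H^2(X,\O_X)$ restrict to zero in $\gr_F^0H^2(E,\C)=H^2(E,\O_E)$; here we use that $E$ is a compact space, so its Du Bois complex satisfies $\gr_F^0H^2(E)=\mathbb H^2(E,\ul\Om^0_E)$. We lift such a $(0,2)$-class $\be$ to the real class $\be+\bar\be\in H^2(Y,\R)$. We then need to arrange that its restriction to $E$ vanishes, adjusting by the $(1,1)$ part where necessary, and deduce that it comes from $H^2(X,\R)$. The strictness of morphisms of mixed Hodge structures will be used for this, with $Y$ replaced by a bimeromorphic K\"ahler-like model if needed.

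The third step treats the local part $\bigoplus_x(R^1\pi_*\O_Y)_x$. Here we use the local cohomology sequence, or equivalently $H^2(X,\R)\supset H^2(X,X^\reg)$-type contributions. Since $(R^1\pi_*\O_Y)_x=H^1(U_x\-E,\O)$ for a small neighbourhood $U_x$, and since $H^1(U_x\-\{x\},\O)$ maps to $H^2_{\{x\}}(X,\O_X)$, we want to show that the connecting map $H^1_{\rm link}\to H^2_x(X,\R)\to H^2(X,\R)$ surjects onto the image. For an isolated singularity, the map $H^1(L_x,\C)\to H^1(U_x\-\{x\},\O)$ from the link cohomology onto $\gr_F^0$ is surjective by the mixed Hodge theory of the link. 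A diagram chase would then finish the argument.

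I expect the main obstacle to be the second step, namely controlling the restriction to $E$ and the failure of $Y$ to be genuinely K\"ahler. The plan is to rely solely on the $\partial\db$ lemma to obtain a pure Hodge structure on $H^2(Y)$ and strictness on $H^2(Y)\to H^2(E)$.
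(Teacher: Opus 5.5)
Your Step 2 is in substance the paper's argument. The paper builds the real lift from splittings of $H^2(Y,\C)\to H^2(Y,\O_Y)$ and $H^2(E,\C)\to H^2(E,\O_E)$ that are compatible with restriction, giving a section $s$, and then uses $s+\bar s$. No adjustment or auxiliary model is needed. If $\be\in H^{0,2}(Y)$ has zero image in $\gr^0_FH^2(E,\C)$, then $\be|_E\in F^1\cap\overline{F^2}$, and this intersection is $0$ in $H^2(E)$ (weights $\le 2$, by Deligne's splitting). So $\be+\bar\be$ restricts to zero on $E$ and comes from $H^2(X,\R)$.

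\textbf{Step 3.} The gap is here. The mixed Hodge theory of the link identifies $\gr^0_FH^1(L_x)$ with $H^1(E_x,\O_{E_x})$, not with $(R^1\pi_*\O_Y)_x$. (The latter agrees with $H^1(U_x\setminus\{x\},\O)$ only when $n\ge3$.) The surjection $(R^1\pi_*\O_Y)_x\to H^1(E_x,\O_{E_x})$ has kernel $(R^1\pi_*\O_Y(-E))_x$, of dimension $b^{0,1}(X,x)$. This vanishes at Du Bois points but not in general, and topological classes need not reach it.

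In fact the statement fails without such a hypothesis. Let $X$ be the projective cone over $(V,L)$, with $V$ smooth projective, $L$ ample, and $H^1(V,L^k)\neq0$ for some $k\ge1$. For example, take $V$ a curve of genus $2$ and $L=\O_V(p)$; or, for $n=3$, take $V=C\times\mathbb{P}^1$ and $L=\O_C(p)\boxtimes\O_{\mathbb{P}^1}(1)$. Let $Y$ be the $\mathbb{P}^1$-bundle compactifying the total space of $L^{-1}$, with $E$ its zero section.
\begin{itemize}
\item The fibrewise $\C^*$-action is trivial on $H^2(X,\C)$, so the image of $H^2(X,\C)\to H^2(X,\O_X)$ lies in the $\C^*$-fixed part.
\item On the other hand, $(R^1\pi_*\O_Y)_x=\bigoplus_{k\ge0}H^1(V,L^k)$, with weight $k$ on the $k$-th summand, while $H^1(Y,\O_Y)=H^1(V,\O_V)$ is invariant.
\item Hence the Leray sequence embeds $\bigoplus_{k\ge1}H^1(V,L^k)\neq0$ into $H^2(X,\O_X)$ with non-zero weights.
\end{itemize}
So the map is not even $\C$-surjective, although $X$ is projective with a single isolated singularity. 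A Du Bois-type hypothesis is therefore needed. It does hold where the paper uses the lemma, since the points there are log-canonical Gorenstein.

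\textbf{Real coefficients.} Even at Du Bois points Step 3 is incomplete, because the lemma asks for real classes. The map $H^1(L_x,\R)\to\gr^0_FH^1(L_x,\C)$ is $\R$-surjective only if $F^1+\overline{F^1}=H^1(L_x,\C)$. This fails whenever $H^1(L_x)\cong H^1(E_x)$ has a non-zero weight-$0$ part, for example when the dual complex of $E_x$ has non-zero $H^1$. The device $\be\mapsto\be+\bar\be$ works in Step 2 only because there $\bar\be\in F^1$, which is false for classes of type $(0,0)$. The local term would need a genuinely global argument.

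The paper's diagram chase runs through sequences $E_2^{20}\to H^2(Y,\mathcal{F})\to E_2^{02}$ that are exact only in the middle. It does not treat the contribution of $H^0(X,R^1\pi_*\O_Y)$ either. You have correctly located the delicate term, but the argument you propose for it does not work.
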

\begin{proof}
We modify the proof of \cite[Proposition 5]{Nam2001}. Let $\pi:(Y,E)\to(X,X^\sing)$ be a projective good resolution. The complex $\ul\Om^\bullet_E=\Omega^\bullet_E/\ta^\bullet_E$ is a resolution of the constant sheaf $\C_E.$ Let $V\sb Y$ be an open neighbourhood of $E$ homotopy equivalent to $E.$ Its ordinary de Rham complex $\Om^\bullet_V$ is a resolution of $\C_V.$ Likewise, $\Om^\bullet_Y$ is a resolution of $\C_Y.$ There are commutative diagrams
\begin{equation}\label{YVE}
\begin{tikzcd}[column sep=small]
\C_Y\ar[r]\ar[d]& \C_V\ar[r]\ar[d]& \C_E\ar[d] \\
\Om^\bullet_Y\ar[r]&\Om^\bullet_V\ar[r]& \ul\Om^\bullet_E,
\end{tikzcd}
\begin{tikzcd}
H^2(Y,\C)\ar[r,"\al"]\ar[d]&H^2(V,\C)\ar[r,equal]\ar[d]& H^2(E,\C)\ar[d] \\
H^2(Y,\O_Y)\ar[r,"\be"]&H^2(V,\O_V)\ar[r]& H^2(E,\O_E).
\end{tikzcd}\end{equation}
Now $H^2(Y,\C)$ has a pure Hodge structure and $H^2(E,\C)$ a mixed Hodge structure. The composite map $H^2(Y,\C)\to H^2(E,\C)$ is a morphism of mixed Hodge structures and there is an induced map $H^2(Y,\O_Y)=\gr^0_FH^2(Y,\C)\to \gr^0_FH^2(E,\C)=H^2(E,\O_E).$ The latter agrees with the composite of the bottom two arrows in \eq{YVE}. The left and right vertical arrows in \eq{YVE} are the edge homomorphisms corresponding to the resolutions of $\C_Y,\C_E$ respectively. As in \cite[Lemma-Definition 3.4]{PSt} there are splittings of
$H^2(Y,\C),H^2(E,\C)$ from which we get sections of the two vertical arrows in \eq{YVE} which commute with the horizontal arrows. This implies that the map $\ker\al\to \ker\be$ has a section, which we denote by $s:\ker\be\to\ker\al.$ Notice that the commutative diagram \eq{YVE} makes sense with $\R$ in place of $\C.$ The corresponding map $H^2(Y,\R)\to H^2(V,\R)=H^2(E,\R)$ is denoted by $\al'.$ The complex conjugate $\bar s$ is well defined and the map $s+\bar s:\ker\be\to\ker\al'$ is a section of the map $\ker\al'\to \ker \be.$ 

If $\cF$ is a sheaf on $Y$ then its Leray spectral sequence relative to $\pi:Y\to X$ is denoted by $E_2^{pq}(\cF)=H^p(X,R^p\pi_*\cF)\Rightarrow H^2(Y,\cF).$ Take $\cF$ to be the constant sheaf $\R_Y$ or the structure sheaf $\O_Y.$ In both cases $R^q\pi_*\cF,q>0,$ is supported on the finite set $X^\sing;$ and in particular, we have $E_2^{11}(\cF)=0.$ There is accordingly an exact sequence $E_2^{20}(\cF)\to H^2(Y,\cF)\to E_2^{02}(\cF)$ consisting of the edge homomorphisms. From the sheaf morphism $\R_Y\to\O_Y$ we get a commutative diagram
\[\begin{tikzcd}
H^2(X,\R)=E_2^{20}(\R_Y)\ar[r]\ar[d,"\ga"]& H^2(Y,\R)\ar[r,"\al' "]\ar[d] &E_2^{02}(\R_Y)=H^2(V,\R)\ar[d]\\
H^2(X,\O_X)=E_2^{20}(\O_Y)\ar[r]& H^2(Y,\O_Y)\ar[r,"\be"] &E_2^{02}(\O_Y)=H^2(V,\O_V)
\end{tikzcd}\]
with exact rows. Since the map $\ker\al'\to\ker\be$ has a section it follows by diagram chase that $\ga$ is surjective. 
\end{proof}
Combining Lemma \ref{lem: Kahler} and Theorem \ref{thm: Bing} we obtain
\begin{cor}\label{cor: Kahler}
Let $X$ be a compact normal complex space with isolated singularities, and $\cX\to S$ a deformation of $X.$ Then the set of $s\in S$ such that the fibre $X_s$ is K\"ahler is open with respect to the underlying topology of $S.$ 
\end{cor}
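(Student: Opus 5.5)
The plan is to check the hypotheses of Theorem \ref{thm: Bing} at every point of $S$ whose fibre is K\"ahler, using Lemma \ref{lem: Kahler}, and then read off openness directly. Fix $s_0\in S$ with $X_{s_0}$ K\"ahler. We must find a neighbourhood of $s_0$ in $S$ over which every fibre is K\"ahler. The restriction of $\cX\to S$ to the germ $(S,s_0)$ is a deformation of the compact space $X_{s_0}$. So it suffices to show that $H^2(X_{s_0},\R)\to H^2(X_{s_0},\O_{X_{s_0}})$ is surjective; Theorem \ref{thm: Bing} then gives the required neighbourhood. I will use Bingener's theorem for an arbitrary base germ, not only for $\De$ as literally stated.

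First, shrink $S$ around the origin so that every fibre is a compact normal complex space with isolated singularities. The locus of $\cX$ where the projection is not smooth meets $X_o$ in a finite set. By properness, after shrinking $S$, this locus is finite over $S$, so every fibre has isolated singularities. Normality of fibres is an open condition in a proper flat family. Here I would cite the analytic version of the openness of normal fibres (EGA IV / Bingener--Flenner); if needed, I would argue via Serre's conditions $R_1$ and $S_2$, both of which are open in flat families. Since the corollary is a statement about a neighbourhood of the central fibre, this shrinking is harmless.

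Second, verify the $\partial\db$ hypothesis of Lemma \ref{lem: Kahler} for $X_{s_0}$. Take a resolution $\pi:Y\to X_{s_0}$ that is projective over $X_{s_0}$, for instance a composition of blow-ups along centres lying over the finitely many singular points. Then $Y$ admits a K\"ahler form: pull back a K\"ahler form of $X_{s_0}$ and add a small multiple of a $\pi$-relatively positive form. So $Y$ is a compact K\"ahler manifold and satisfies the $\partial\db$ lemma. The $\partial\db$ lemma is a bimeromorphic invariant among compact manifolds, so every resolution then satisfies it. Alternatively, I only need the one projective good resolution that is actually used in the proof of Lemma \ref{lem: Kahler}.

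Lemma \ref{lem: Kahler} now gives the surjectivity of $H^2(X_{s_0},\R)\to H^2(X_{s_0},\O_{X_{s_0}})$, and Theorem \ref{thm: Bing} finishes the argument. The main obstacle I expect is the first step: making sure that fibres near $s_0$, and $X_{s_0}$ itself, stay normal with isolated singularities uniformly in $s_0$, together with the point that a resolution of a singular K\"ahler space is K\"ahler. The latter is standard for projective bimeromorphic morphisms onto K\"ahler spaces, and I would cite it rather than reprove it.
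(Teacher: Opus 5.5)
Your proposal is correct and is essentially the paper's argument, which just combines Lemma \ref{lem: Kahler} with Theorem \ref{thm: Bing}; you also spell out two points the paper leaves implicit: shrinking $S$ so that nearby fibres stay normal with isolated singularities, and deducing the $\partial\db$ hypothesis from K\"ahlerness. One caution: the $\partial\db$ lemma is not known to be a bimeromorphic invariant of compact manifolds in general, so rely instead on one of two other routes. Either use your alternative, that only the projective (hence K\"ahler) good resolution enters Lemma \ref{lem: Kahler}, or use the fact that every resolution of a compact K\"ahler space lies in Fujiki's class $\mathcal C$ and therefore satisfies the $\partial\db$ lemma.
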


\section{Log-Canonical Singularities}\label{sect: lc}
We begin by recalling the definition of log-canonical singularities and proving a simple lemma about them. We suppose throughout that the complex spaces are normal and Gorenstein.
\begin{dfn}\l{dfn: lc}
Let $X$ be a Gorenstein normal complex space and $\om_X$ its canonical sheaf. We say that $X$ is {\it log-canonical} if the following holds: let $\pi:(Y,E)\to(X,X^\sing)$ be a good resolution with $E=\sum_iE_i,$ each $E_i$ irreducible, and define $a_i\in\Z$ by $\om_Y=\pi^*\om_X(\sum_i a_i E_i);$ then $a_i\ge-1$ for every $i.$ Each $a_i$ is called the {\it discrepancy coefficient of $E_i.$}  
\end{dfn}
\begin{lem}\label{lem: lc}
Let $X$ be a Gorenstein normal complex space. Then {\bf(i)} $X$ is log-canonical if and only if {\bf(ii)} for every good resolution $\pi:(Y,E)\to(X,X^\sing)$ we have $\pi_*\om_Y(E)=\om_X.$ 
\end{lem}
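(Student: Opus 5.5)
We will prove both implications by comparing sections of $\om_Y(E)$ and $\om_X$ away from $X^\sing$, using that $X$ is normal and $\om_X$ is invertible. The starting observation is that $\pi$ is an isomorphism over $X^\reg$. Any local section of $\om_X$ on an open $U\sb X$ therefore pulls back to a meromorphic section of $\om_Y$ on $\pi^{-1}(U)$, holomorphic off $E$. Write $\om_Y=\pi^*\om_X(\sum_ia_iE_i)$. A local generator $\sigma$ of $\om_X$ then pulls back to a section $\pi^*\sigma$ of $\om_Y$ with divisor exactly $-\sum_ia_iE_i$ along the exceptional divisor, i.e.\ with order $-a_i$ along $E_i$.

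For (i)$\Rightarrow$(ii), we first note that $\pi_*\om_Y(E)\sb j_*\om_{X^\reg}$, where $j:X^\reg\to X$ is the inclusion. Since $X$ is normal and $\om_X$ is invertible, hence reflexive, we have $j_*\om_{X^\reg}=\om_X$. So $\pi_*\om_Y(E)\sb\om_X$ always holds, by the Hartogs extension property for normal spaces; this inclusion needs no hypothesis. For the reverse inclusion, take a local section $\sigma$ of $\om_X$. Then $\pi^*\sigma$ has order $-a_i\le1$ along $E_i$ when $a_i\ge-1$. Hence $\pi^*\sigma$ is a holomorphic section of $\om_Y(E)$, because $E=\sum E_i$ is reduced and only simple poles are allowed. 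This gives $\om_X\sb\pi_*\om_Y(E)$, and hence equality. We apply this to every good resolution: the log-canonical condition in Definition \ref{dfn: lc} is stated with respect to \emph{a} good resolution, so we first check that it does not depend on the choice. Discrepancies along divisors with $a\ge-1$ remain $\ge-1$ after blowing up smooth centres meeting an snc divisor transversally; this is the standard computation, and any two good resolutions are dominated by a third. It is also possible that the intended reading of Definition \ref{dfn: lc} is ``for every good resolution'', in which case this step is unnecessary.

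For (ii)$\Rightarrow$(i), we take any good resolution $\pi$ and argue locally near a point $x\in X^\sing$. Choose a generator $\sigma$ of $\om_X$ near $x$. By (ii), $\sigma\in\pi_*\om_Y(E)$, so $\pi^*\sigma$ is a holomorphic section of $\om_Y(E)$. Its order along each $E_i$ is therefore at most $1$, and this order equals $-a_i$, which gives $a_i\ge-1$ for all $E_i$ lying over $x$. Since $X^\sing$ consists of the points where exceptional divisors lie, this covers all $i$.

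The main obstacle is the independence of the resolution, which is needed to get ``every'' in (ii) from a definition phrased with a single resolution. Our plan there is to reduce to a single blow-up of a smooth centre $Z$ of codimension $c\ge2$ with normal crossings with $E$. The new discrepancy is $c-1+\sum_{Z\sb E_i}a_i$. Here at most $c$ of the components $E_i$ contain $Z$, each with $a_i\ge-1$, so the new discrepancy is at least $-1$. We then conclude using weak factorization or domination of resolutions by composites of such blow-ups.
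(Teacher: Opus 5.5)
Your argument is correct and is essentially the paper's: after trivializing $\om_X$ locally, (ii) forces the pulled-back generator to be a section of $\om_Y(E)\cong\O_Y(\sum_i(a_i+1)E_i)$, which gives $a_i\ge-1$, and conversely normality gives $\pi_*\O_Y(\sum_i(a_i+1)E_i)=\O_X$ once all $a_i+1\ge0$. Your extra blow-up argument for independence of the chosen good resolution covers a point the paper passes over without comment, and it works as sketched; note only that your ``order'' means pole order, since in the usual convention $\mathrm{div}(\pi^*\sigma)=\sum_ia_iE_i$.
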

\begin{proof}
As the statement is local and as $X$ is Gorenstein, we may suppose that $\om_X\cong\O_X.$ Let (ii) hold and write $\om_Y=\pi^*\om_X(\sum_i a_iE_i)$ as in Definition \ref{dfn: lc}. Then
\[\ts H^0(X,\O_X)=H^0(X,\om_X)=H^0(Y,\om_Y(E))=H^0(Y,\O_Y(\sum_i(a_i+1)E_i)).\] 
But $1\in H^0(X,\O_X)$ and so $a_i+1\ge0$ for every $i;$ that is, (i) holds. Conversely, if $a_i\ge-1$ then $X$ normal implies $\O_X=\pi_*\O_Y(\sum_i(a_i+1)E_i).$ The latter sheaf is equal to $\pi_*\om_Y(E)$ as we have to prove.
\end{proof}
The following theorem is the main result of this section.
\begin{thm}\l{thm: lc}
Let $X$ be a complex space and $f:X\to S$ a flat morphism whose fibre over $s\in S$ is denoted by $X_s.$ Then the set of $x\in X$ such that $x\in X_{f(x)}$ is either an isolated log-canonical Gorenstein normal singularity or a non-singular point is an open subset of $X.$
\end{thm}
\begin{proof}
For $S=\C$ the theorem is a result of Ishii \cite[Proposition 4.4]{Ish small} (and there is a more general result called the inversion of adjunction theorem \cite{Fuj} proved using the minimal model program). We deduce from this the general case. 

It is well known that the set of $x\in X_{f(x)}$ such that $x\in X_{f(x)}$ is either an isolated Gorenstein normal singularity or a non-singular point is an open subset of $X.$ Suppose therefore that for every $s\in S$ the fibre $X_s$ has isolated Gorenstein normal singularities and for some $o\in S$ the fibre $X_o$ has a unique log-canonical singulartiy. We prove then that for $s$ close enough to $o$ the singularities of $X_s$ are log-canonical.

Suppose first that $S$ is reduced. By Theorem \ref{thm: strat} there exists a stratification $S=S^0\sqcup\dots\sqcup S^m.$ For $j=0,\dots,m$ put $X^j:=S^j\times_SX.$ Since $S^0,\dots,S^m$ are non-singular it follows that $X^0,\dots,X^m$ are Gorenstein normal complex spaces. Fix $i\in\{0,\dots,m\}$ for the moment. Denote by $\Si\sb S^j$ the set of $s\in S^j$ such that $X_s$ is not log-canonical. We show that it is an analytic subset of $S^j.$ By the property of $S^j$ there exists a good resolution $g:(Y,E)\to (X^j,(X^j)^\sing)$ of the projection $X^j\to S^j.$ The inclusion $g_*\om_Y(E)\to\om_{X^j}$ is denoted by $\io.$ For $s\in S^j$ the discrepancy coefficients of $X_s$ are locally constant functions of $s.$ So a point $x\in X_s$ is log-canonical if and only if $x\in X^j$ is log-canonical. By Lemma \ref{lem: lc} this holds if and only if the stalk morphism $\io_x$ is an isomorphism. Thus $\Si=f(\supp(\ker\io\op\coker\io)).$ Since $f|_{X^\sing}:X^\sing\to S$ is proper it follows that $\Si\sb S^j$ is an analytic subset. 

Denote by $\ov\Si\sb S$ the Zariski closure of $\Si\sb S.$ Suppose $o\in\ov\Si.$ Taking a resolution of $\ov\Si\-S^j\subset \ov\Si$ we see that the curve selection lemma holds; that is, there exist an open neighbourhood $\De$ of $0\in\C$ and a morphism $(\De,0)\to(S,o)$ which maps $\De\-\{0\}$ to $\ov\Si\cap S^j=\Si.$ But $X_o$ is log-canonical, so by \cite[Proposition 4.4]{Ish small} there exists $s\in\Si$ such that $X_s$ is log-canonical, which contradicts the definition of $\Si.$ Thus $o$ does not lie in $\ov\Si.$ 

Replacing $S$ by $S\-\ov\Si$ it follows that if $s\in S^j$ then $X_s$ is log-canonical. Repeating this process for $i=0,\dots,m$ it follows that every $X_s$ is log-canonical. If $S$ is not reduced, take its irreducible components and shrink them as has just been done, which completes the proof. 
\end{proof}

\section{Du Bois Singularities}\label{sect: DB}
For $p,q=0,1,2,\dots$ the {\it Du Bois invariant} $b^{pq}(X,x)$ and the {\it link invariant} $l^{pq}(X,x)$ are defined to be the respective dimensions of
\begin{equation}\label{DB link}
H^q(Y,\Om^p_Y(\log E)(-E)) \text{ and } H^q(E,\Om^p_Y(\log E)\otimes_{\O_Y}\O_E).
\end{equation}
The two invariants are independent of the choice of $f.$ After replacing $X$ by a contractible Stein neighbourhood of $x$ the local duality holds; that is, 
\[\text{$H^q(Y,\Om^p_Y(\log E)(-E))$ is dual to $H^{n-q}_E(Y,\Om^{n-p}_Y(\log E))$}\]
and $l^{pq}(X,x)=l^{n-p,n-q-1}(X,x).$ By the results of \cite{GNPP,St85}, if $p+q>n$ then $b^{pq}(X,x)=0.$

The notion of Du Bois singularities was introduced in \cite{St83} using the results of \cite{DB}; see also \cite[Definition 7.34]{PSt}. For $X$ normal, the isolated singularity $x\in X$ is Du Bois if and only if for $q=1,2,3,\dots$ we have $b^{0q}(X,x)=0.$ By \cite[Theorem 1 and Proposition 1]{St97}, if $X$ is Cohen--Macaulay of dimension $n$ then $b^{0q}(X,x)=0$ unless $q=n-1.$ Du Bois singularities and log-canonical singularities are related as follows.
\begin{thm}[Ishii {\cite[Theorem 2.3]{Ish G}}]\label{thm: iso Goren DB}
Let $X$ be a Gorenstein normal complex space and $x\in X$ an isolated singularity. Then $x$ is log-canonical if and only if $x$ is Du Bois. \qed
\end{thm}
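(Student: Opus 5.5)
The plan is to show both implications by comparing, on a good resolution, the condition that the discrepancies satisfy $a_i\ge-1$ (via Lemma \ref{lem: lc}) with the Du Bois condition $b^{0q}(X,x)=0$. Since the statement is local and $X$ is Gorenstein, I shrink $X$ to a contractible Stein neighbourhood of $x$, fix $\om_X\cong\O_X$, and take a good resolution $\pi:(Y,E)\to(X,x)$. By Steenbrink's result recalled above ($X$ is Cohen--Macaulay, being Gorenstein), the only Du Bois invariant that can be non-zero is $b^{0,n-1}(X,x)=\dim H^{n-1}(Y,\O_Y(-E))$. So $x$ is Du Bois if and only if $H^{n-1}(Y,\O_Y(-E))=0$.

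I would then dualise with the local duality stated above. Taking $p=0$, $q=n-1$, $H^{n-1}(Y,\O_Y(-E))$ is dual to $H^1_E(Y,\Om^n_Y(\log E))=H^1_E(Y,\om_Y(E))$. The local cohomology sequence, together with the vanishing $H^0_E(Y,\om_Y(E))=0$ and the fact that $Y\-E\cong X\-\{x\}$, gives an exact sequence
\[0\to H^0(Y,\om_Y(E))\to H^0(X\-\{x\},\om_X)\to H^1_E(Y,\om_Y(E))\to H^1(Y,\om_Y(E)).\]
By normality $H^0(X\-\{x\},\om_X)=H^0(X,\om_X)$. The first map is therefore the inclusion $\pi_*\om_Y(E)\sb\om_X$ on global sections of the Stein space, and its cokernel is $\om_X/\pi_*\om_Y(E)$, a skyscraper at $x$.

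The key step, and the one I expect to be the main obstacle, is to show that $H^1(Y,\om_Y(E))=0$, or at least that the map $H^1_E(Y,\om_Y(E))\to H^1(Y,\om_Y(E))$ is zero. I would first try Grauert--Riemenschneider type vanishing for log forms on a Stein base, i.e.\ Steenbrink vanishing $R^q\pi_*\Om^p_Y(\log E)(-E)=0$ for $p+q>n$, dualised. Taking $p=n$, $q=1$ gives $R^1\pi_*\om_Y=0$. Then the sequence $0\to\om_Y\to\om_Y(E)\to\om_E\to0$ reduces the question to $H^1(E,\om_E)\to H^2(Y,\om_Y)=0$. That route does not directly kill $H^1$, so instead I would argue on the level of images. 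Consider the composite $H^0(X\-\{x\},\om_X)\to H^1_E\to H^1(Y,\om_Y(E))$. Its image vanishes after restricting to $Y\-E$, and I would compare it with $H^1_E(Y,\om_Y)\to H^1(Y,\om_Y)=0$ to conclude that the image of $H^1_E(Y,\om_Y(E))$ in $H^1(Y,\om_Y(E))$ comes from $H^0(E,\om_E)$. If this is too delicate, the fallback is to quote Steenbrink's characterisation that, for isolated normal Gorenstein points, $b^{0,n-1}$ equals $\dim\om_X/\pi_*\om_Y(E)$.

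Granting that identification, the result follows: $b^{0,n-1}(X,x)=\dim_\C\om_X/\pi_*\om_Y(E)$. Hence $x$ is Du Bois if and only if $\pi_*\om_Y(E)=\om_X$, which by Lemma \ref{lem: lc} holds if and only if $x$ is log-canonical.
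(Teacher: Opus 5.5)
The paper does not prove this statement (it is quoted from Ishii), so I am judging your argument on its own. Your set-up is the natural one, and it already gives one direction. The exact sequence you write embeds $\om_X/\pi_*\om_Y(E)$ into $H^1_E(Y,\om_Y(E))\cong H^{n-1}(Y,\O_Y(-E))^\vee$. Hence $b^{0,n-1}(X,x)\ge\dim_\C\om_X/\pi_*\om_Y(E)$, and Du Bois implies log-canonical by Lemma \ref{lem: lc}.

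The converse has a genuine gap, at exactly the step you flag. For $n\ge3$ your ``weaker'' goal is not weaker. We have $H^1(Y\-E,\om_Y)=H^1(X\-\{x\},\om_X)\cong H^2_{\{x\}}(X,\om_X)=0$, because $\om_X\cong\O_X$ has depth $n\ge3$. So $H^1_E(Y,\om_Y(E))\to H^1(Y,\om_Y(E))$ is surjective, and it is zero only if $H^1(Y,\om_Y(E))=0$. By your own computation this group is $H^1(E,\om_E)\cong H^{n-2}(E,\O_E)^\vee$. The ``level of images'' argument cannot give this vanishing. The composite of two consecutive maps in an exact sequence is zero for formal reasons. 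Comparing with $H^1_E(Y,\om_Y)\to H^1(Y,\om_Y)=0$ only shows that the image lies in $H^1(E,\om_E)$ (not $H^0(E,\om_E)$), and that is the whole target. No chase among these coherent sequences yields $H^{n-2}(E,\O_E)=0$. The fallback citation is in effect the statement itself (or the Kov\'acs--Schwede--Smith criterion in the paper's remark), so it does not close the argument.

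The missing input is Hodge-theoretic and has two parts. First, $H^{n-2}(Y,\O_Y)=0$. By local duality this group is dual to $H^2_E(Y,\om_Y)$. By Grauert--Riemenschneider, $H^1(Y,\om_Y)=H^2(Y,\om_Y)=0$, so $H^2_E(Y,\om_Y)\cong H^1(X\-\{x\},\om_X)=0$ as above. Second, $Y$ retracts onto the simple normal crossing divisor $E$, so $\ul\Om^0_E=\O_E$. The composite $H^{n-2}(E,\C)\cong H^{n-2}(Y,\C)\to H^{n-2}(Y,\O_Y)\to H^{n-2}(E,\O_E)=\gr^0_FH^{n-2}(E,\C)$ is then surjective by $E_1$-degeneration of the Hodge--de Rham spectral sequence of $E$. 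Together these give $H^{n-2}(E,\O_E)=0$, hence $H^1(Y,\om_Y(E))=0$ and $b^{0,n-1}(X,x)=\dim_\C\om_X/\pi_*\om_Y(E)$. After that your final paragraph goes through.

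For $n=2$ the situation is different: $H^1(Y,\om_Y(E))\cong H^0(E,\O_E)^\vee\cong\C$ never vanishes. There only your weaker goal, the vanishing of the map, can hold, and it must be argued separately.
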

\begin{rmk}
For algebraic varieties there is a more general result \cite{KSS}: a Cohen--Macaulay normal algebraic variety $X$ (which may have non-isolated singularities) is Du Bois if and only if it satisfies the condition (ii) of Lemma \ref{lem: lc}.
\end{rmk}

Here is a base change theorem for Du Bois singularities taken from \cite[Theorem 1.1 and Remark 4.3]{FL2}; the original versions are in \cite{DB,DBJ}.
\begin{thm}\label{thm: DB}
Let $f:X\to S$ be a proper flat morphism of complex spaces and $s\in S$ a point over which the fibre $X_s$ has isolated Du Bois singularities and its resolutions satisfy the the $\partial\db$ lemma. Then for $q=0,1,2,\dots$ the $\O_{S,s}$ module $(R^qf_*\O_X)_s$ is free and compatible with base change. \qed
\end{thm}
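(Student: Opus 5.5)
The plan is the classical Du Bois argument: compare the Du Bois complex of the total space with that of the fibre, and use the surjectivity of $H^q(X_t,\C)\to H^q(X_t,\O_{X_t})$ to control the jumps of $h^q(X_t,\O_{X_t})$. Throughout I work over a small neighbourhood of $s$ and shrink $S$ freely.

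\textbf{Step 1: reduce to a reduced base and constant dimensions.} I first reduce to the case where $S$ is a reduced germ; the non-reduced case should follow from the formal/Artinian version of the argument, applied to small extensions. By the openness results already proved (Theorem~\ref{thm: lc} together with Theorem~\ref{thm: iso Goren DB} on the Du Bois side), the fibres $X_t$ near $s$ have isolated Du Bois singularities. I would also want their resolutions to satisfy the $\partial\db$ lemma, in the same way that Corollary~\ref{cor: Kahler} gives openness of the K\"ahler condition. For each such fibre the Du Bois property yields a surjection
\[
H^q(X_t,\C)\to \mathbb{H}^q(X_t,\ul\Om^0_{X_t})=H^q(X_t,\O_{X_t}).
\]
The $\partial\db$ hypothesis, via the argument of Lemma~\ref{lem: Kahler} on a resolution, shows that the Hodge-to-de Rham spectral sequence for $\ul\Om^\bullet_{X_t}$ degenerates. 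Hence
\[
\dim \gr^0_F H^q(X_t,\C)=h^q(X_t,\O_{X_t}).
\]

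\textbf{Step 2: show $t\mapsto h^q(X_t,\O_{X_t})$ is constant.} By semicontinuity this function is upper semicontinuous. Topologically the family is locally trivial after removing the finitely many singular points, but the Betti numbers of $X_t$ can change, since isolated singularities may be smoothed. Instead of Betti numbers I therefore use the Du Bois complex of the family. Choose a good resolution of $X$ near the singular points (or a cubical hyperresolution) and form the relative Du Bois complex $\ul\Om^0_{X/S}$. The key claim is
\[
\mathbb{H}^q(X_t,\ul\Om^0_{X_t}) = H^q(X_t,\O_{X_t}),
\]
together with the statement that the natural map $H^q(X,\O_X)_s\otimes k(t)\to H^q(X_t,\O_{X_t})$ is surjective for all $q$. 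This is the Du Bois--Jarraud criterion: surjectivity on every fibre and in every degree forces local freeness and base change by descending induction on $q$ (Grauert / cohomology-and-base-change). To get surjectivity I factor through $H^q(X_t,\C)$. Let $U$ be a contractible neighbourhood of $s$. Since $X_U$ retracts onto $X_s$, the composite $H^q(X_s,\C)\cong H^q(X_U,\C)\to H^q(X_t,\C)\to H^q(X_t,\O_{X_t})$ factors through $H^q(X_U,\O_{X_U})$. Surjectivity then reduces to surjectivity of the specialization map $H^q(X_s,\C)\to H^q(X_t,\C)$ on $\gr^0_F$.

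\textbf{Step 3, the main obstacle: surjectivity of specialization on $\gr^0_F$.} The specialization map $H^q(X_s,\C)\to H^q(X_t,\C)$ is a morphism of mixed Hodge structures into the limit MHS. Its cokernel is governed by the vanishing cohomology at the isolated singular points, which lives only in degrees $q=n-1,n$. For $q\ne n-1,n$ there is nothing to prove. For $q=n-1,n$ I would use the isolated-singularity fact $b^{0q}(X_t,x)=0$ for Du Bois points to show that the vanishing cycles have $\gr^0_F=0$. This is Steenbrink's observation that Du Bois isolated singularities have no weight-zero $F^0$ part in the Milnor fibre cohomology beyond what comes from the special fibre. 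I would import this from Steenbrink's spectrum / the $F^0$ part of the vanishing cohomology rather than reprove it; the precise form of this input is the main point of uncertainty in the plan. Given it, the cokernel has trivial $\gr^0_F$, so the map is surjective on $\gr^0_F$. Combining this with the degeneration from Step 1 gives surjectivity onto $H^q(X_t,\O_{X_t})$ for all $q$ and all nearby $t$, which yields freeness and base change of $R^qf_*\O_X$.
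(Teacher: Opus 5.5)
Your proposal has a genuine gap, and it comes from aiming at the wrong fibre. Steps 1--3 all concern the nearby fibres $X_t$, and each step needs input the hypotheses do not give.

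\begin{itemize}
\item You need every $X_t$ to be Du Bois with a degenerate Hodge--de Rham spectral sequence. But Theorems~\ref{thm: lc} and~\ref{thm: iso Goren DB} only concern Gorenstein singularities, which are not assumed here. Nothing provides the $\partial\bar\partial$ lemma for resolutions of $X_t$, since a resolution of $X_t$ is not a deformation of a resolution of $X_s$.
\item In Step~3, the specialization map $H^q(X_s,\C)\to H^q(X_t,\C)$ is a morphism of mixed Hodge structures only into the limit mixed Hodge structure of a one-parameter degeneration. It is not a morphism into $H^q(X_t,\C)$ with its own Hodge filtration. Over a base that is not a disc, you would still need an argument transferring ``surjective modulo $F^1$'' from the limit to actual fibres, uniformly in all directions.
\item Your claim that the cokernel lives only in degrees $n-1,n$ is a complete-intersection phenomenon. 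Milnor fibres of smoothings of general isolated singularities can have cohomology in intermediate degrees.
\item The decisive $\gr^0_F$ statement is imported, not proved.
\item The reduction to reduced $S$ is not harmless. Constancy of $h^q(X_t,\O_{X_t})$ gives freeness only over a reduced base. The paper uses this statement (Corollary~\ref{cor: DB}) over Kuranishi spaces that may be non-reduced, and the ``Artinian version'' you defer to is never supplied.
\end{itemize}

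The missing idea is that no nearby fibre is needed: apply your own Step~2 factorization with $t=s$.
\begin{itemize}
\item The stalk $(R^qf_*\O_X)_s$ is $\varinjlim_U H^q(f^{-1}(U),\O_X)$.
\item By proper base change (or because $f^{-1}(U)$ retracts onto $X_s$ for small $U$), every class in $H^q(X_s,\C)$ extends to $H^q(f^{-1}(U),\C)$ for some $U$.
\item The map $H^q(X_s,\C)\to H^q(X_s,\O_{X_s})$ is surjective, because $X_s$ is Du Bois and, by the $\partial\bar\partial$ hypothesis on its resolutions, the Hodge--de Rham spectral sequence of $\ul\Om^\bullet_{X_s}$ degenerates. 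This map factors through $H^q(f^{-1}(U),\O_X)$.
\item Hence the base change map $(R^qf_*\O_X)_s\otimes_{\O_{S,s}}\C\to H^q(X_s,\O_{X_s})$ is surjective for every $q$.
\item The cohomology and base change theorem, by descending induction on $q$, then shows that every $(R^qf_*\O_X)_s$ is free and commutes with arbitrary base change. No reducedness of $S$ and no information about other fibres is needed.
\end{itemize}
Equivalently, you can run the same argument over Artinian thickenings $\spec A\times_S X$, which have the same underlying space as $X_s$. This is the Du Bois--Jarraud argument behind the references the paper cites for this statement.
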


\begin{cor}\label{cor: DB}
Let $f:X\to S$ be a proper flat morphism of complex spaces and $s\in S$ a point over which the fibre $X_s$ is a compact log-canonical Gorenstein complex space with trivial canonical sheaf, with isolated singularities and whose resolutions satisfy the $\partial\db$ lemma. Then after replacing $S$ by a neighbourhood of $s$ the relative canonical sheaf $\om_{X/S}$ is a rank-one free $\O_X$ module.
\end{cor}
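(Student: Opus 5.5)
The plan is to produce a nowhere-vanishing section of $\om_{X/S}$ near $X_s$ by lifting a generator of $\om_{X_s}\cong\O_{X_s}$, using Serre–Grothendieck duality together with the base change property of Theorem \ref{thm: DB}.

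\textbf{Step 1: set-up and base change.} Since $X_s$ is Gorenstein and $f$ is flat, after shrinking $S$ every fibre is Gorenstein. Hence $\om_{X/S}$ is an invertible $\O_X$-module, and its formation commutes with base change: $\om_{X/S}|_{X_t}\cong\om_{X_t}$. Put $n=\dim X_s$. By Theorem \ref{thm: iso Goren DB} the isolated log-canonical Gorenstein singularities of $X_s$ are Du Bois. Theorem \ref{thm: DB} then shows that each $R^qf_*\O_X$ is free near $s$ and commutes with base change.

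\textbf{Step 2: duality.} Relative duality for the proper flat Cohen--Macaulay morphism $f$ gives
\[
f_*\om_{X/S}\cong \mathcal{H}om_{\O_S}(R^nf_*\O_X,\O_S).
\]
This holds because all the $R^qf_*\O_X$ are locally free and commute with base change, so the duality spectral sequence degenerates. Consequently $f_*\om_{X/S}$ is locally free and commutes with base change. In particular $f_*\om_{X/S}\otimes k(s)\to H^0(X_s,\om_{X_s})$ is an isomorphism. Alternatively, one can apply cohomology and base change directly to the flat sheaf $\om_{X/S}$, using duality on each fibre: $H^q(X_t,\om_{X_t})$ is dual to $H^{n-q}(X_t,\O_{X_t})$, and the latter has locally constant dimension.

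\textbf{Step 3: lifting the generator.} The generator $\si_0\in H^0(X_s,\om_{X_s})\cong H^0(X_s,\O_{X_s})=\C$ therefore lifts to a section $\si\in H^0(X,\om_{X/S})$ after shrinking $S$. The zero locus of $\si$ is a closed analytic subset of $X$ disjoint from the compact fibre $X_s$, because $\si|_{X_s}=\si_0$ vanishes nowhere. Since $f$ is proper, its image in $S$ is closed and avoids $s$, so we may shrink $S$ to exclude it. Then $\si$ is a nowhere-vanishing section of the invertible sheaf $\om_{X/S}$, and $\si:\O_X\to\om_{X/S}$ is an isomorphism.

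\textbf{Main obstacle.} The hard step is Step 2: making relative duality and base change precise for complex spaces over a possibly non-reduced base $S$. I would first try the fibrewise-duality route with Grauert's base change theorem. If the base is non-reduced, I would instead argue inductively over Artin thickenings of $s$ and then pass to the analytic statement via flatness of the $R^qf_*\O_X$.
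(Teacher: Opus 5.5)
Your proposal is correct and follows essentially the same route as the paper. Theorem \ref{thm: DB} gives freeness and base change for $R^nf_*\O_X$, relative duality transfers this to $f_*\om_{X/S}$ so that the generator of $H^0(X_s,\om_{X_s})$ lifts, and the lifted section is nowhere vanishing near the compact fibre, with two points the paper leaves implicit made explicit: Ishii's Theorem \ref{thm: iso Goren DB} supplies the Du Bois hypothesis, and properness of $f$ lets you shrink $S$ to avoid the zero locus.
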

\begin{proof}
Let $X_s$ have pure dimension $n.$ By Theorem \ref{thm: DB} the map $(R^nf_*\O_{X/S})_s\otimes_{\O_{S,s}}\C\to H^n(X_s,\O_{X_s})$ is an isomorphism. By relative duality this induces an isomorphism $(R^0f_*\om_{X/S})_s\otimes_{\O_{S,s}}\C\to H^0(X_s,\om_{X_s}).$ Since $\om_{X_s}\cong\O_{X_s}$ we get an $\O_{S,s}$ module isomorphism $(R^0f_*\om_{\cX/S})_s\cong\O_{S,s}.$ The generator of $(R^0f_*\om_{X/S})_s$ defines near $X$ a nowhere-vanishing section of $\om_{X/S}.$ 
\end{proof}

\section{Vanishing Theorems}\label{sect: vanishing}
We begin with the following vanishing theorem.
\begin{thm}[Steenbrink {\cite[\S2]{St97}}]\label{thm: DB van}
Let $X$ be a complex space of dimension $n,$ and $x\in X$ an isolated Du Bois singularity. Then $b^{1,n-1}(X,x)=0.$ \qed
\end{thm}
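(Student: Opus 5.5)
Throughout, let $\pi:(Y,E)\to(X,x)$ be a good resolution of a contractible Stein neighbourhood of $x$, with $E$ a simple normal crossing divisor. By definition $b^{1,n-1}(X,x)=\dim H^{n-1}(Y,\Om^1_Y(\log E)(-E))$, so we have to show this group vanishes. The plan is to dualize and reduce to a statement about local cohomology of $\Om^{n-1}_Y(\log E)$, and then use the Du Bois hypothesis to kill it.

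\textbf{Step 1: dualize.} By the local duality recalled above, $H^{n-1}(Y,\Om^1_Y(\log E)(-E))$ is dual to $H^{1}_E(Y,\Om^{n-1}_Y(\log E))$. It therefore suffices to prove
\[
H^1_E(Y,\Om^{n-1}_Y(\log E))=0.
\]
\textbf{Step 2: use the local cohomology sequence.} This group sits in
\[
H^0(Y,\Om^{n-1}_Y(\log E))\to H^0(Y\-E,\Om^{n-1}_{Y})\to H^1_E(Y,\Om^{n-1}_Y(\log E))\to H^1(Y,\Om^{n-1}_Y(\log E)).
\]
So two things are needed:
\begin{itemize}
\item[(a)] every holomorphic $(n-1)$-form on $X^\reg=Y\-E$ extends to a logarithmic form on $Y$;
\item[(b)] the map $H^1_E\to H^1(Y,\Om^{n-1}_Y(\log E))$ is zero.
\end{itemize}
For (a), since $n\ge3$ and $x$ is a normal isolated point, the extension part is a known theorem (Steenbrink--van Straten type log-extension, valid for $p\le n-1$ in the Du Bois case). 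Alternatively one can reduce it, via contraction with a local generator of $\om$, to the reflexive-form extension statement.

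\textbf{Step 3: Hodge-theoretic route.} A cleaner approach is to work directly with the Du Bois complex. Steenbrink's description gives $\ul\Om^p_X\simeq R\pi_*\Om^p_Y(\log E)(-E)$ away from degree zero, and for isolated singularities $H^q(Y,\Om^p_Y(\log E)(-E))\cong \mathcal H^q(\ul\Om^p_X)_x$ for $q>0$. The aim is then to show $\mathcal H^{n-1}(\ul\Om^1_X)_x=0$. I would use the graded pieces of the mixed Hodge structure on the local cohomology $H^{k}_{\{x\}}(X,\C)$, equivalently on the link cohomology $H^{k-1}(L)$. Concretely, $b^{1,n-1}$ injects into $\gr^1_F H^n_{\{x\}}(X)$ modulo the image of $H^{n-1}(E)$. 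The Du Bois condition $b^{0,q}=0$ forces $\gr^0_F$ of these to vanish. Combined with the symmetry $l^{pq}=l^{n-p,n-q-1}$ and the semipurity of $H^n_{\{x\}}$ (weights $\ge n$), this should force $\gr^1_F$ to be concentrated in $W$-weights that cannot occur, giving the vanishing.

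\textbf{Main obstacle.} The hardest part is this weight/Hodge bookkeeping, namely showing that the $(1,\cdot)$ part of $H^n_{\{x\}}$ maps isomorphically from $H^{n-1}(E,\Om^1_Y(\log E)|_E)$ once $b^{0,n-1}=0$. I would first try the approach of Steenbrink \cite[\S2]{St97}: on the exact triangle $\Om^1_Y(\log E)(-E)\to\Om^1_Y(\log E)\to\Om^1_Y(\log E)|_E$, apply the vanishing $H^{q}(Y,\Om^p_Y(\log E))=0$ for $p+q>n$ (Steenbrink vanishing). This gives surjectivity of $H^{n-2}(E,\Om^1_Y(\log E)|_E)\to H^{n-1}(Y,\Om^1_Y(\log E)(-E))$. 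Then one uses the Du Bois hypothesis, via $b^{0,n-1}=0$ and Hodge symmetry on the link MHS, to show this connecting map is zero.
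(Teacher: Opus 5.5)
There is a genuine gap. The step where the Du Bois hypothesis actually has to be used is never carried out.

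\textbf{What fails in your argument.} In Step~2 you give no argument for (b). Part (a) is fine, since it follows from Steenbrink--van Straten together with $b^{0,n-1}=0$, and this works for any $n\ge2$, so your restriction to $n\ge3$ is unnecessary. Step~3 stops at ``this should force''. It also gets semipurity backwards: $H^n_{\{x\}}(X)\cong H^{n-1}(L)$ has weights $\le n-1$, not $\ge n$.

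The route in your last paragraph fails outright. The vanishing $H^q(Y,\Om^p_Y(\log E))=0$ for $p+q>n$ is false. For a normal singularity, Grauert--Riemenschneider and duality give $H^{n-1}(Y,\Om^n_Y(\log E))\cong H^{n-1}(E,\om_E)\cong\C$. Steenbrink's vanishing is for $\Om^p_Y(\log E)(-E)$, not for $\Om^p_Y(\log E)$. Even a correct vanishing of this type would not help. The group you would need to kill, $H^{n-1}(Y,\Om^1_Y(\log E))$, has $p+q=n$, so it is outside the range of any $p+q>n$ vanishing, and the claimed surjectivity does not follow. Finally, showing that the connecting map into $H^{n-1}(Y,\Om^1_Y(\log E)(-E))$ is zero is the whole theorem. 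Saying ``use $b^{0,n-1}=0$ and Hodge symmetry'' does not prove it.

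\textbf{The missing idea.} The Du Bois condition only controls $p=0$, so you need the de Rham differential to move that information to $p=1$.

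The complex $\Om^\bullet_Y(\log E)(-E)=\ker(\Om^\bullet_Y\to\ul\Om^\bullet_E)$ resolves $j_!\C_{Y\-E}$. Hence on a contractible Stein representative its hypercohomology is $H^*(Y,E;\C)=0$.

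Consider the stupid-filtration spectral sequence $E_1^{p,q}=H^q(Y,\Om^p_Y(\log E)(-E))$.
\begin{itemize}
\item Every $d_r$ leaving $E_r^{1,n-1}$ lands in a term with $p+q=n+1$, which vanishes by Steenbrink's theorem, or in a term with $p=n+1$, which is zero.
\item The only differential entering $E_r^{1,n-1}$ is $d_1$ from $E_1^{0,n-1}$; the other incoming differentials start at $p<0$.
\end{itemize}
Since $E_\infty^{1,n-1}=0$, the map $d\colon H^{n-1}(Y,\O_Y(-E))\to H^{n-1}(Y,\Om^1_Y(\log E)(-E))$ is surjective. Therefore $b^{1,n-1}\le b^{0,n-1}$, and $b^{0,n-1}=0$ is the Du Bois condition.

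In the dual language of your Step~1, this says that $d\colon H^1_E(Y,\Om^{n-1}_Y(\log E))\to H^1_E(Y,\Om^n_Y(\log E))$ is injective; this is the local-cohomology form of Steenbrink--van Straten. Its target is dual to $H^{n-1}(Y,\O_Y(-E))$, which vanishes. So instead of splitting $H^1_E$ along the restriction sequence into (a) and (b), you should push it by $d$ into $H^1_E$ of $n$-forms. The paper itself gives no proof of this statement; it simply cites Steenbrink.
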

\begin{cor}\label{cor: 1n-2}
Let $X$ be a complex space of dimension $n$ and $x\in X$ an isolated Du Bois singularity. Then
\[\dim_\C H^2_E(Y,\Om^{n-1}_Y(\log E)(-E))=l^{1,n-2}(X,x)+b^{1,n-2}(X,x).\]
\end{cor}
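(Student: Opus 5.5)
We use the long exact sequence of local cohomology for the pair $E\subset Y$, with $Y$ a contractible Stein neighbourhood of $x$ replaced by its good resolution $\pi:(Y,E)\to(X,x)$. Write $\cF:=\Om^{n-1}_Y(\log E)(-E)$. There is an exact sequence
\[
H^1(Y,\cF)\to H^1(Y\-E,\cF)\to H^2_E(Y,\cF)\to H^2(Y,\cF)\to H^2(Y\-E,\cF).
\]
The plan is to identify the outer terms with $b$ and $l$ invariants and to show that the two extreme maps vanish. Then we get a short exact sequence
\[
0\to \coker\big(H^1(Y,\cF)\to H^1(Y\-E,\cF)\big)\to H^2_E(Y,\cF)\to H^2(Y,\cF)\to 0 .
\]
Here $\dim H^2(Y,\cF)=b^{n-1,2}(X,x)$ by \eq{DB link}. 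Since local duality pairs $H^q(Y,\Om^p_Y(\log E)(-E))$ with $H^{n-q}_E(Y,\Om^{n-p}_Y(\log E))$, it is cleaner to dualize the statement from the start. I would instead work with the dual sequence for $\cG:=\Om^1_Y(\log E)$:
\[
H^{n-3}(Y\-E,\cG)\to H^{n-2}_E(Y,\cG)\to H^{n-2}(Y,\cG)\to H^{n-2}(Y\-E,\cG)\to H^{n-1}_E(Y,\cG)\to\cdots.
\]
The idea is that $H^2_E(Y,\cF)$ is dual to $H^{n-2}(Y,\Om^1_Y(\log E))$, via Serre duality on the (pseudoconvex) $Y$ with support in $E$.

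So the cleanest route is to prove
\[
\dim H^{n-2}(Y,\Om^1_Y(\log E))=b^{1,n-2}+l^{1,n-2}.
\]
For this I use the restriction sequence
\[
0\to\Om^1_Y(\log E)(-E)\to\Om^1_Y(\log E)\to\Om^1_Y(\log E)\otimes\O_E\to0,
\]
whose long exact sequence reads
\[
H^{n-2}(Y,\Om^1_Y(\log E)(-E))\to H^{n-2}(Y,\Om^1_Y(\log E))\to H^{n-2}(E,\Om^1_Y(\log E)|_E)\xrightarrow{\de} H^{n-1}(Y,\Om^1_Y(\log E)(-E)).
\]
By Theorem \ref{thm: DB van} the last group has dimension $b^{1,n-1}=0$, so $\de=0$. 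It remains to show that the first map is injective, i.e.\ that the connecting map $H^{n-3}(E,\Om^1_Y(\log E)|_E)\to H^{n-2}(Y,\Om^1_Y(\log E)(-E))$ vanishes. Granting that, the middle term has dimension $b^{1,n-2}+l^{1,n-2}$, and the duality identifies it with $H^2_E(Y,\Om^{n-1}_Y(\log E)(-E))$. For the duality I still need to check that the dual of $\Om^1_Y(\log E)$ twisted by $\om_Y$ is $\Om^{n-1}_Y(\log E)(-E)$, which holds since $\Om^n_Y(\log E)=\om_Y(E)$.

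The main obstacle is the injectivity, i.e.\ the vanishing of
\[
H^{n-3}(E,\Om^1_Y(\log E)|_E)\to H^{n-2}(Y,\Om^1_Y(\log E)(-E)).
\]
First I would try Hodge theory on the link: $H^q(E,\Om^p_Y(\log E)|_E)$ computes $\gr_F^p$ of the cohomology of the link $L$, while $H^q(Y,\Om^p_Y(\log E)(-E))$ computes $\gr_F^p H^{p+q}_{\{x\}}(X)$, or equivalently $\gr_F^p H^{p+q}(Y,E)$. The connecting map is then $\gr_F^1$ of the topological map $H^{n-2}(L)\to H^{n-1}(Y,L)$. Strictness of morphisms of mixed Hodge structures reduces the vanishing to a topological/weight statement. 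This map is the coboundary $H^{n-2}(L)\to H^{n-1}_{\{x\}}(X)$, which is injective only in degrees $\ge n$, so it need not vanish outright. If the topological argument fails, the fallback is to compare weights: $H^{n-2}(L)$ has weights $\le n-2$, while $H^{n-1}_{\{x\}}(X)$ in the Du Bois case has weights $\ge$ the relevant bound. I would use \cite{St97} and the degree bound $p+q>n\Rightarrow b^{pq}=0$ to show that the $\gr^1_F$ pieces cannot match. I expect this to be the delicate step.
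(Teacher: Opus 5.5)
Your reduction is correct, and it is the paper's argument read through local duality. The paper applies $H^\bullet_E(Y,-)$ to $0\to\Om^{n-1}_Y(\log E)(-E)\to\Om^{n-1}_Y(\log E)\to\Om^{n-1}_Y(\log E)\otimes\O_E\to0$, and this sequence is dual to your restriction sequence for $\Om^1_Y(\log E)$. The correspondence is exact on both ends:
\begin{itemize}
\item The paper's left-hand zero, $H^1_E(Y,\Om^{n-1}_Y(\log E))=0$, is dual to your use of $b^{1,n-1}=0$.
\item The paper's right-hand zero, the vanishing of $H^2_E(Y,\Om^{n-1}_Y(\log E))\to H^2(E,\Om^{n-1}_Y(\log E)\otimes\O_E)$, is precisely the dual of the vanishing of your connecting map $\de\colon H^{n-3}(E,\Om^1_Y(\log E)\otimes\O_E)\to H^{n-2}(Y,\Om^1_Y(\log E)(-E))$.
\end{itemize}
The paper writes that zero without comment. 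You have correctly located the one non-formal point, but you do not prove it, so there is a genuine gap. The step is not automatic: the source of $\de$ has dimension $l^{1,n-3}$, which need not vanish for Du Bois singularities (e.g.\ $l^{1,0}=2$ for the cone over an abelian surface). So you must show that a map vanishes, not that a group does.

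The route you sketch will not supply this, because it rests on a wrong identification. For $Y$ retracting onto $E$ one has $H^\bullet(Y,E;\C)=0$, so the $b^{p,q}$ are not graded pieces of anything there. Also, for contractible $X$ one has $H^k_{\{x\}}(X)\cong H^{k-1}(L)$, so $\gr^p_F H^{p+q}_{\{x\}}(X)$ has dimension $l^{p,q-1}$, not $b^{p,q}$. In particular the coboundary $H^{n-2}(L)\to H^{n-1}_{\{x\}}(X)$ is an isomorphism, and comparing weights of these two groups gives nothing.

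What works is to globalize.
\begin{itemize}
\item Algebraize the germ (Artin), compactify, and resolve the other singularities away from $x$. This gives a projective $\bar X$ whose only singular point is the Du Bois point $x$, and a good projective resolution $\bar Y\supset Y$ with the same $E$.
\item By naturality, $\de$ equals the connecting map $\bar\de$ of the same sequence on $\bar Y$, followed by the restriction $H^{n-2}(\bar Y,\Om^1_{\bar Y}(\log E)(-E))\to H^{n-2}(Y,\Om^1_Y(\log E)(-E))$.
\item For $n\ge3$, Theorem~\ref{thm: PSV} with $p=n-2$ says exactly that this restriction vanishes: it is the edge map to $E_2^{0,n-2}$.
\end{itemize}
Hence $\de=0$, and your count $\dim H^{n-2}(Y,\Om^1_Y(\log E))=b^{1,n-2}+l^{1,n-2}$ goes through.

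Your weight idea can be salvaged only with the right pure target. $\gr^1_F$ of the map $H^{n-2}(L)\to H^{n-1}(\bar Y)$ vanishes, since the source has weights $\le n-2$ and $H^{n-1}(\bar Y)$ is pure of weight $n-1$. But this only kills $\de$ after composing with $H^{n-2}(Y,\Om^1_Y(\log E)(-E))\to H^{n-2}(Y,\Om^1_Y)$, and the injectivity of that map is a further local input you would still need.
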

\begin{proof}
The local cohomology sequence induces an exact sequence
\[0\to H^1_E(\ts\frac{\Om^{n-1}_Y(\log E)}{\Om^{n-1}_Y(\log E)(-E)})\to H^2_E(\Om^{n-1}_Y(\log E)(-E))\to H^2_E(\Om^{n-1}_Y(\log E))\to 0.\]
But $H^1_E(\ts\frac{\Om^{n-1}_Y(\log E)}{\Om^{n-1}_Y(\log E)(-E)})=H^1(E,\Om^{n-1}_Y(\log E)\otimes_{\O_Y}\O_E)$ whose dimension is by duality equal to $l^{1,n-2}(X,x).$ Also $H^2_E(\Om^{n-1}_Y(\log E))$ is dual to $H^{n-2}(\Om^1_Y(\log E)(-E))$ which has dimension $b^{1,n-2}(X,x).$ This completes the proof.
\end{proof}

We turn now to the vanishing theorem of Popa, Shen and Vo \cite[Proposition 5.2]{PSV}. 
\begin{thm}\label{thm: PSV}
Let $X$ be a compact complex space with isolated Du Bois singularities and whose resolutions satisfy the $\partial\db$ lemma. Let $\pi:(Y,E)\to(X,X^\sing)$ be a good resolution. Put $\cF:=\Omega^1_Y(\log E)(-E)$ and denote by $E_2^{pq}=H^p(X,R^q\pi_*\cF)\Rightarrow H^{p+q}(Y,\cF)$ the Leray spectral sequence. Then for every $p=0,1,2,\dots$ the edge homomorphism $H^p(Y,\cF)\to E_2^{0p}$ vanishes.
\end{thm}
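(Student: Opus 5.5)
We factor the edge homomorphism through a neighbourhood of the exceptional set and then use Hodge theory to show the factored map is zero. Since $R^q\pi_*\cF$ is supported on the finite set $X^\sing$ for $q>0$, we have $E_2^{0p}=H^0(X,R^p\pi_*\cF)=H^p(V,\cF|_V)$ for $p>0$, where $V\sb Y$ is a small open neighbourhood of $E$, a disjoint union of preimages of contractible Stein neighbourhoods of the singular points. The edge homomorphism is then the restriction map $H^p(Y,\cF)\to H^p(V,\cF)$. For $p=0$ the edge map is an isomorphism onto $E_2^{00}=H^0(X,\pi_*\cF)$. The case $p=0$ must therefore mean something else (for instance that $\pi_*\cF$ is compared with a reflexive sheaf), or the statement is really about $p\ge1$. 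I will treat $p\ge1$ and check separately what is intended for $p=0$.

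Step 1 is to reduce to the case where the target is $b^{1,p}$-type local cohomology. There is the exact sequence $H^p_E(Y,\cF)\to H^p(Y,\cF)\to H^p(Y\-E,\cF)$ together with its local analogue on $V$. So it suffices to show that the image of $H^p(Y,\cF)$ in $H^p(V,\cF)$ is zero.

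Step 2 is to interpret both groups Hodge-theoretically via the Du Bois complex of the pair. Since the singularities are isolated Du Bois, $\cF=\Om^1_Y(\log E)(-E)$ computes $\gr_F^1$ of the relative cohomology $H^*(Y,E;\C)$: by the $\partial\db$ lemma on $Y$ and the degeneration of the Hodge--de Rham spectral sequence for the pair (Deligne, via the $\partial\db$ hypothesis as in Lemma \ref{lem: Kahler}), we get $H^p(Y,\cF)=\gr^1_FH^{p+1}(Y,E;\C)$. Locally, $H^p(V,\cF)=\gr_F^1H^{p+1}(V,E;\C)$, and this vanishes for the topological reason that $V$ retracts onto $E$, so $H^*(V,E)=0$. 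Here we need the local Hodge-to-de Rham degeneration for the local pair, which is where the Du Bois hypothesis enters: $H^q(V,\Om^\bullet_Y(\log E)(-E))$ computes $H^q(V,E;\C)=0$. The restriction map $H^{p+1}(Y,E)\to H^{p+1}(V,E)=0$ is a morphism of mixed Hodge structures, and by strictness of $F$ the induced map on $\gr_F^1$ is the edge homomorphism. It is therefore zero.

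The main obstacle is Step 2. One must justify that the local groups $H^p(V,\Om^1_Y(\log E)(-E))$ really are $\gr_F^1$ of a mixed Hodge structure on $H^{p+1}(V,E)$ (or on the link). The local Hodge spectral sequence need not degenerate, and indeed $b^{1,p}\ne0$ in general, so the naive vanishing argument is false. The fix I would try first is to compare instead with $H^p(V\-E,\cF)$, that is, with the link cohomology $H^{p+1}$ of the link $L$, which carries a mixed Hodge structure. I would show that the composition $H^p(Y,\cF)\to H^p(V,\cF)\to H^p(V\-E,\cF)$ factors through $\gr_F^1$ of $H^{p+1}(Y,E)\to H^{p+1}(V,E)=0$. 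I would also show that $H^p(V,\cF)\to H^p(V\-E,\cF)$ is injective on the relevant image, using $H^p_E(V,\cF)$, which is dual to $H^{n-p}(V,\Om^{n-1}_Y(\log E))$, and the Du Bois vanishing of Theorem \ref{thm: DB van} and Steenbrink's results to kill the local cohomology contributions. If this comparison fails, I would fall back on the argument of Popa--Shen--Vo using the $\partial\db$ lemma to split the Hodge filtration on $H^{p+1}(Y,E)$ compatibly with restriction, as was done with the splittings of \cite[Lemma-Definition 3.4]{PSt} in Lemma \ref{lem: Kahler}.
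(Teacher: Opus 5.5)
You are right that the edge map is the restriction $H^p(Y,\cF)\to\bigoplus_xH^p(V_x,\cF)$. You are also right about $p=0$: there the edge map is the identity of $H^0(Y,\cF)=H^0(X,\pi_*\cF)$, which need not vanish, so the statement can only hold for $p\ge1$. The paper's argument derives an exact sequence $E_2^{p0}\to H^p(Y,\cF)\to E_2^{0p}$ from $E_2^{ab}=0$ for $a,b>0$, and it is likewise valid only for $p\ge1$; only $p=n-2$ is used later.

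For $p\ge1$, however, the proposal has a genuine gap at the one step with content. Step 2 rests on $H^p(V,\cF)=\gr_F^1H^{p+1}(V,E;\C)$, which, as you concede, is false because it would force $b^{1,p}=0$. Your repair does not close the gap, for three reasons.
\begin{itemize}
\item $H^p(V\setminus E,\cF)$ is not $\gr_F^1$ of the link cohomology; that role is played by $H^p(E,\Om^1_Y(\log E)\otimes\O_E)$.
\item You give no mechanism by which the composite would factor through $H^{p+1}(V,E;\C)=0$.
\item Theorem \ref{thm: DB van} only gives $b^{1,n-1}=0$, which does not control $H^p_E(V,\cF)\to H^p(V,\cF)$.
\end{itemize}
Your fallback, ``argue as Popa--Shen--Vo'', is in effect the paper's entire proof. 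The paper reduces the claim to surjectivity of $E_2^{p0}=H^p(X,\pi_*\cF)\to H^p(Y,\cF)$ and cites \cite[Proposition 5.2]{PSV} for that surjectivity. As written, your proposal neither makes this reduction nor supplies that input.

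Your instinct to use $H^*(V,E;\C)=0$ is sound. The missing idea is that you need Hodge degeneration only globally, and the local input is the Du Bois vanishing $b^{0p}=0$ rather than any local degeneration. Put $K^\bullet=\Om^\bullet_Y(\log E)(-E)$, a resolution of $j_!\C_{Y\setminus E}$, and $F^1K^\bullet=\sigma_{\ge1}K^\bullet$. Then $\gr_F^1K^\bullet=\cF[-1]$ and $K^\bullet/F^1K^\bullet=\O_Y(-E)$. By $E_1$-degeneration for the compact pair $(Y,E)$, which is where the $\bd\db$ hypothesis enters, every $s\in H^p(Y,\cF)$ lifts to some $\tilde s\in\mathbb H^{p+1}(Y,F^1K^\bullet)$. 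On $V$ there is an exact sequence
\[H^p(V,\O_Y(-E))\to\mathbb H^{p+1}(V,F^1K^\bullet)\to\mathbb H^{p+1}(V,K^\bullet)=H^{p+1}(V,E;\C).\]
The right-hand term vanishes because $V$ retracts onto $E$. The left-hand term vanishes for $p\ge1$ precisely because the singularities are Du Bois. Hence $\tilde s|_V=0$, and therefore $s|_V=0$, which is the vanishing of the edge map. The argument breaks down exactly at $p=0$, where $H^0(V,\O_Y(-E))\ne0$, in agreement with your observation.
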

\begin{proof}
Since $R^q\pi_*\cF, q>0,$ is supported on the finite set $X^\sing$ it follows that for $p,q>0$ we have $E_2^{pq}=0.$ So there is an exact sequence $E_2^{p0}\to H^p(Y,\cF)\to E_2^{0p}$ consisting of the edge homomorphisms. But by \cite[Proposition 5.2]{PSV} the map $E_2^{p0}\to H^p(Y,\cF)$ is surjective; and consequently, the map $H^p(Y,\cF)\to E_2^{0p}$ vanishes.
\end{proof}
\begin{rmk}
We shall in practice take $p=n-2,$ in which case the result is reproved in \cite[Remark 2.5]{Friedman}.
\end{rmk}
\begin{cor}[Proposition 4.5 of \cite{Ten}]\label{cor: surj log}
Let $X$ be a compact log-canonical Gorenstein complex space of pure dimension $n,$ with isolated singularities and whose resolutions satisfy the $\partial\db$ lemma. Let $\pi:(Y,E)\to(X,X^\sing)$ be a good resolution. Then the natural map $H^{q-1}(X^\reg,\Om^{n-1}_{X^\reg}(\log E))\to H^q_E(Y,\Om^{n-1}_Y(\log E))$ is surjective.
\end{cor}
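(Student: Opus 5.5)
We identify $Y\-E$ with $X^\reg$. The local cohomology sequence of the pair $(Y,Y\-E)$ with coefficients in $\Om^{n-1}_Y(\log E)$ gives an exact sequence
\[
H^{q-1}(Y,\Om^{n-1}_Y(\log E))\to H^{q-1}(X^\reg,\Om^{n-1}_{X^\reg})\to H^q_E(Y,\Om^{n-1}_Y(\log E))\to H^q(Y,\Om^{n-1}_Y(\log E)).
\]
The restriction of $\Om^{n-1}_Y(\log E)$ to $X^\reg$ is $\Om^{n-1}_{X^\reg}$, which is what the notation in the statement means. Surjectivity of the connecting map is therefore equivalent to the vanishing of
\[
\rho:H^q_E(Y,\Om^{n-1}_Y(\log E))\to H^q(Y,\Om^{n-1}_Y(\log E)).
\]
We prove this vanishing by duality from Theorem \ref{thm: PSV}.

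\textbf{Step 1: use the trivialized canonical sheaf near $E$.} The singularities are isolated log-canonical Gorenstein, so by Theorem \ref{thm: iso Goren DB} they are Du Bois. On a small Stein neighbourhood $U$ of $X^\sing$ we may trivialize $\om_X$. Then $\om_Y(E)=\O_Y(\sum_i(a_i+1)E_i)$ with $a_i+1\ge0$, and the wedge pairing identifies
\[
\Om^{n-1}_Y(\log E)\cong \mathcal{H}om(\Om^1_Y(\log E),\Om^n_Y(\log E)).
\]
We work with the pairing $\Om^{n-1}_Y(\log E)\otimes\Om^1_Y(\log E)(-E)\to\Om^n_Y$, which is perfect because $\Om^n_Y(\log E)(-E)=\Om^n_Y$. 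This avoids having to use the discrepancies at all.

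\textbf{Step 2: dualize $\rho$.} Put $\cF=\Om^1_Y(\log E)(-E)$. Serre duality on the compact manifold $Y$ identifies $H^q(Y,\Om^{n-1}_Y(\log E))$ with the dual of $H^{n-q}(Y,\cF)$. Local duality on $\pi^{-1}(U)$, as recalled in \S\ref{sect: DB}, identifies $H^q_E(Y,\Om^{n-1}_Y(\log E))$ (computed by excision on $\pi^{-1}(U)$) with the dual of $H^{n-q}(\pi^{-1}(U),\cF)=E_2^{0,n-q}=H^0(X,R^{n-q}\pi_*\cF)$. Under these identifications, the transpose of $\rho$ should be the restriction map
\[
H^{n-q}(Y,\cF)\to H^{n-q}(\pi^{-1}(U),\cF),
\]
which is exactly the edge homomorphism of the Leray spectral sequence. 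Theorem \ref{thm: PSV} says this edge map vanishes for every degree. Hence $\rho=0$ and the connecting map is surjective.

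\textbf{Main obstacle.} The delicate step is checking that the transpose of the forget-supports map $\rho$ really is this restriction map. This is a compatibility between Serre duality on $Y$ and local duality on a neighbourhood of $E$. Both dualities come from the trace pairing with $H^n_c$ and $H^n$ into $\Om^n_Y$, and the natural map $H^n_E\to H^n$ is compatible with the traces. I would verify the compatibility using Dolbeault representatives with compact support near $E$. The finite dimensionality of $H^q_E$ for $q<n$, which holds because $E$ is exceptional, ensures that no topological-dual subtleties arise.
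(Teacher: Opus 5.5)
Your proposal is correct and is the paper's argument: you use the local cohomology sequence to reduce to the vanishing of the forget-supports map $H^q_E(Y,\Om^{n-1}_Y(\log E))\to H^q(Y,\Om^{n-1}_Y(\log E))$, and you get that vanishing by dualizing (Serre duality on $Y$, local duality near $E$) the vanishing edge homomorphism of Theorem \ref{thm: PSV} with $p=n-q$. The paper takes for granted the duality compatibility you flag as the main obstacle, and your trivialization of $\om_X$ in Step 1 is not actually needed, since the perfect pairing into $\Om^n_Y$ already suffices.
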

\begin{proof}
Take $q=n-p.$ The dual map $(E_2^{0p})^*\to H^p(Y,\cF)^*$ vanishes; that is, the map $H^q_E(Y,\Om^{n-1}_Y(\log E))\to H^q(Y,\Om^{n-1}_Y(\log E))$ vanishes. The local cohomology exact sequence implies therefore the conclusion above. 
\end{proof}
\begin{rmk}
In Corollary \ref{cor: surj log} we do not need the singularities to be complete intersections.
\end{rmk}

\section{Proof of Theorem \ref{thm: CY1}}\label{sect: proof of CY1}
Combining Theorem \ref{thm: surj tang} and Corollary \ref{cor: surj log} we prove
\begin{lem}\label{lem: CY1}
Let $X$ be a compact log-canonical Gorenstein complex space of dimension $n\ge3,$ with trivial canonical sheaf, with isolated singularities and whose resolutions satisfy the $\partial\db$ lemma. Suppose that there exists a good resolution $(\cY,\cE)\to(\cX,\cX^\sing)$ of the morphism $\cX\to\Def(X)$ which is a semi-universal deformation of $X.$ Then the singularities of $X$ have $b^{1,n-2}=0.$
\end{lem}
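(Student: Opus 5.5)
Let $\pi:(Y,E)\to(X,X^\sing)$ be the good resolution induced by $(\cY,\cE)\to(\cX,\cX^\sing)$. The plan is to compare two tangent spaces and show that the local part of $T_o\Def(X)$ measured by $b^{1,n-2}$ dies. Since $\om_X\cong\O_X$, Gorenstein duality identifies $T_o\Def(X)=\Ext^1(\Om^1_X,\O_X)$ with $\Ext^1(\Om^1_X,\om_X)$. Locally at each singular point, the standard identification (as in Tenie and Friedman) gives $H^0(X,\mathcal{E}xt^1(\Om^1_X,\O_X))\supseteq$ a quotient of dimension $b^{1,n-2}$, namely $H^2_E(Y,\Om^{n-1}_Y(\log E)) \cong H^{n-2}(Y,\Om^1_Y(\log E)(-E))^*$ at each point; this uses $\Om^{n-1}_Y(\log E)\cong T_Y(-\log E)\otimes\om_Y(E)$ and $\om_Y(E)\cong\O_Y(\sum(a_i+1)E_i)$.

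\textbf{Step 1.} Write down the composite
\[
T_o\Def(X)\longrightarrow H^0(X,T^1_X)\longrightarrow \bigoplus_{x\in X^\sing} H^2_E(Y,\Om^{n-1}_Y(\log E)),
\]
where the first arrow is the local-to-global map. The second arrow is obtained from $T^1_{X,x}\cong H^1(U\-\{x\},T_U)=H^1(U\-E,T_Y)$ on a Stein neighbourhood $U$, via $T_{X^\reg}\cong\Om^{n-1}_{X^\reg}$ (trivial $\om_X$), followed by the coboundary $H^1(U\-E,\Om^{n-1}_Y(\log E))\to H^2_E(U,\Om^{n-1}_Y(\log E))$. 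The target has dimension $\sum_x b^{1,n-2}(X,x)$ by local duality (as in the proof of Corollary \ref{cor: 1n-2}).

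\textbf{Step 2.} Show that this composite is surjective. The global map $H^1(X^\reg,\Om^{n-1}_{X^\reg})\to H^2_E(Y,\Om^{n-1}_Y(\log E))$ is surjective by Corollary \ref{cor: surj log} with $q=2$. Here $H^1(X^\reg,\Om^{n-1})=H^1(X^\reg,T_{X^\reg})$, and for $n\ge3$ and isolated singularities with depth $\ge3$ this contains (indeed equals) $\Ext^1(\Om^1_X,\O_X)=T_o\Def(X)$ modulo the appropriate identifications; one checks that the two maps agree.

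\textbf{Step 3.} Show that the composite vanishes on the image of $T_o\ph:T_o\Def(Y,E)\to T_o\Def(X)$. Now $T_o\Def(Y,E)=H^1(Y,T_Y(-\log E))$, and its image under restriction to $Y\-E=X^\reg$ lands in $H^1(X^\reg,T)$. Since $T_Y(-\log E)\cong\Om^{n-1}_Y(\log E)\otimes\O_Y(-\sum(a_i+1)E_i)\subseteq \Om^{n-1}_Y(\log E)$, which uses $a_i\ge-1$ (log-canonical), the class extends across $E$ as a section of $\Om^{n-1}_Y(\log E)$. Its coboundary in $H^2_E$ is therefore zero by the local cohomology sequence. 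Compatibility of $T_o\ph$ with restriction to $X^\reg$ follows from the construction in Proposition \ref{prop: MK}, since $\cY\to\cX$ is an isomorphism off the exceptional loci.

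\textbf{Step 4.} By Theorem \ref{thm: surj tang}, $T_o\ph$ is surjective, so the composite of Step 1 is both surjective and zero. Hence $\bigoplus_x H^2_E(Y,\Om^{n-1}_Y(\log E))=0$, that is, $b^{1,n-2}(X,x)=0$ for all $x$.

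The main obstacle is Step 2/3 bookkeeping. One must verify that the identifications $T_o\Def(X)\cong H^1(X^\reg,T_{X^\reg})$ (valid since depth $\ge3$ at isolated points when $n\ge3$) and the map from $T_o\Def(Y,E)$ are compatible with the morphism $\ph$ built by Artin approximation. I would check this at first order, over $\spec\C[\ep]/(\ep^2)$, where $\ph$ is just $f_*$ of the deformed structure sheaf, as in the first paragraph of Proposition \ref{prop: MK}.
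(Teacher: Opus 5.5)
Your proposal is correct and is essentially the paper's proof. Surjectivity of $T_o\Def(Y,E)\to T_o\Def(X)$ (Theorem~\ref{thm: surj tang}), combined with the log-canonical contraction $\Th_Y(-\log E)\hookrightarrow\Om^{n-1}_Y(\log E)$, forces the connecting map $H^1(X^\reg,\Om^{n-1}_{X^\reg})\to H^2_E(Y,\Om^{n-1}_Y(\log E))$ to vanish; Corollary~\ref{cor: surj log} with $q=2$ makes it surjective, so $H^2_E(Y,\Om^{n-1}_Y(\log E))=0$ and local duality gives $b^{1,n-2}=0$.

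Routing through $H^0(X,T^1_X)$ is only bookkeeping, since the global connecting map already splits over the singular points by excision. The compatibility of $T_o\ph$ with restriction to $X^\reg$, which you flag, is used implicitly in the paper as well.
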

\begin{proof}
Denote by $o\in \Def(X)$ the origin and by $(Y,E)\sb (\cY,\cE)$ the fibre over it of the composite morphism $\cY\to\Def(X).$ The induced morphism $(Y,E)\to(X,X^\sing)$ is a good resolution. Since the canonical sheaf of $X$ is trivial, we get on $X^\reg$ a nowhere-vanishing holomorphic $n$-form. As $X$ is log-canonical, the pull-back of the $n$-form defines a section of $\om_Y(E).$ Contraction with it defines a sheaf morphism $\Th_Y(-\log E)\to \Om^{n-1}_Y(\log E).$ The image of this is generically a holomorphic $n-1$ form; more precisely, the results of the contraction are, at every point of $E$ lying on exactly one irreducible component of it, local sections of $\Om^{n-1}_Y.$ But $\Om^{n-1}_Y$ is a locally free sheaf on $Y$ and every non-trivial intersection of the irreducible components of $E$ has co-dimension $\ge2.$ So the morphism $\Th_Y(-\log E)\to\Om^{n-1}_Y(\log E)$ has image in $\Om^{n-1}_Y.$ 

Let $(\Def(Y,E),o)$ be a Kuranishi space germ of $(Y,E).$ By Theorem \ref{thm: surj tang} the map $T_o\Def(Y,E)\to T_o\Def(X)$ is surjective. We have $T_o\Def(Y,E)\cong H^1(Y,\Th_Y(-\log E))$ and $T_o\Def(X)\cong \Ext^1(\Om_X,\O_X).$ By Schlessinger's theorem we have $\Ext^1(\Om_X,\O_X)\cong H^1(X^\reg,\Th_{X^\reg})\cong H^1(X^\reg,\Om^{n-1}_{X^\reg}).$ So the map $H^1(Y,\Th_Y(-\log E))\to H^1(X^\reg,\Om^{n-1}_{X^\reg})$ is surjective. By the local cohomology exact sequence the map $H^1(X,\Om^{n-1}_{X^\reg})\to H^2_E(Y,\Th_Y(-\log E))$ vanishes. Using the sheaf morphisms $\Th_Y(-\log E)\to \Om^{n-1}_Y\to \Om^{n-1}_Y(\log E)$ we see that 
\begin{equation}\label{conn van}\parbox{10cm}{
the connecting homomorphisms $H^1(X,\Om^{n-1}_{X^\reg})\to H^2_E(Y,\Om^{n-1}_Y)$ and $H^1(X,\Om^{n-1}_{X^\reg})\to H^2_E(Y,\Om^{n-1}_Y(\log E))$ vanish. 
}\end{equation}
By Corollary \ref{cor: surj log} with $q=2$ the latter map is surjective. So $H^2_E(Y,\Om^{n-1}_Y(\log E))$ vanishes; that is, the singularities of $X$ have $b^{1,n-2}=0.$
\end{proof}

We now restate and prove Theorem \ref{thm: CY1}. 
\begin{thm}\label{thm: CY1 again}
Let $X$ be a compact log-canonical Gorenstein K\"ahler space of dimension $n\ge3,$ with isolated singularities and such that $\om_X\cong\O_X.$ Then there exist a semi-universal deformation $\cX\to \Def(X)$ and a Zariski dense open subset $\De\sb \Def(X)$ such that for every $t\in\De$ the singularities of $\cX_t$ have $b^{1,n-2}=0.$
\end{thm}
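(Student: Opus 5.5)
The plan is to combine Lemma \ref{lem: CY1} with the stratification of Theorem \ref{thm: strat}, applied now over the \emph{global} Kuranishi space, and argue by contradiction on the generic stratum. First we reduce to a situation where the hypotheses of Lemma \ref{lem: CY1} hold at every nearby fibre. By Corollary \ref{cor: Kahler}, after shrinking $\Def(X)$ every fibre $\cX_t$ is K\"ahler; in particular its resolutions satisfy the $\partial\db$ lemma, since a resolution of a compact K\"ahler space with isolated singularities can be taken K\"ahler. By Theorem \ref{thm: lc} the fibres still have only isolated log-canonical Gorenstein singularities. By Theorem \ref{thm: iso Goren DB} these are Du Bois. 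By Corollary \ref{cor: DB} the relative canonical sheaf is trivial, so $\om_{\cX_t}\cong\O_{\cX_t}$. Moreover, by Theorem \ref{thm: DB} the dimensions $h^q(\cX_t,\O)$ are locally constant. Hence, by openness of versality, $\cX\to\Def(X)$ restricts near $t$ to a versal deformation of $\cX_t$.

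Next we stratify. The singular locus of $\cX$ is finite over $\Def(X)$ near $X^\sing$, so we apply the argument of Theorem \ref{thm: strat} to $\cX\to\Def(X)$ simultaneously at all singular points of $X$; this works equally well for each irreducible component of $\Def(X)^{\rm red}$. We take $D$ to be the union of the top-dimensional open strata in each irreducible component of $\Def(X)$. This $D$ is Zariski dense and open, and over it $\cX$ admits a good simultaneous resolution $(\cY,\cE)$.

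The Du Bois invariant $b^{1,n-2}$ of the fibre singularities is computed as $h^{n-2}$ of $\Om^1_{\cY_t}(\log\cE_t)(-\cE_t)$ on a neighbourhood of the exceptional set. This is upper semicontinuous in $t\in D$, and it is also constructible, so the locus $\De\sb D$ where all singular points have $b^{1,n-2}=0$ is Zariski open. It remains to show that $\De$ is dense, i.e.\ nonempty in each component of $D$.

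Suppose instead that $b^{1,n-2}\ne0$ at some singular point for all $t$ in a nonempty open $U\sb D$. Choose $t\in U$ that is a smooth point of $\Def(X)^{\rm red}$ and in general position. Around $t$ the family is a versal deformation of $\cX_t$. The key step is to arrange that over the germ of $\Def(X)$ at $t$, which is the Kuranishi space $\Def(\cX_t)$ up to a smooth factor, there is still a good simultaneous resolution. Then Lemma \ref{lem: CY1} applied to $\cX_t$ gives $b^{1,n-2}=0$ at its singularities, a contradiction.

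The main obstacle is exactly this step. The Kuranishi germ of $\cX_t$ may have components or nonreduced structure beyond the reduced stratum through $t$, while Lemma \ref{lem: CY1} requires a resolution over the whole semi-universal base. I would try first to show that near a general point $t$ of the top stratum, the germ $(\Def(X),t)$ equals its reduction and is smooth. Then the simultaneous resolution over the stratum is a resolution over the full versal base, and restricting to a semi-universal slice gives the hypothesis of Lemma \ref{lem: CY1}. Failing that, I would re-examine the proof of Lemma \ref{lem: CY1}: it uses only the surjectivity of $T_o\ph$ from Theorem \ref{thm: surj tang}, which needs the tangent-level identity $T(\ps\cm\ph)=\mathrm{id}$. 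So it may suffice to have the resolution over the reduced base, provided one checks that $T_t$ of the reduced germ is all of $T^1(\cX_t)$ at generic $t$ (generic smoothness of $\Def(X)^{\rm red}$ together with versality).
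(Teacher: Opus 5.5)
Your outline has the right architecture: openness of the hypotheses, a stratification carrying a good simultaneous resolution over a Zariski dense open set, then Lemma~\ref{lem: CY1}. But it stops at the one step that does the work. You never produce a good resolution of the \emph{semi-universal} deformation of $\cX_t$, and Lemma~\ref{lem: CY1} needs exactly that. It rests on the surjectivity in Theorem~\ref{thm: surj tang}, which requires a resolution over the whole base, not over a sub-germ.

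Neither fallback closes the gap. By openness of versality $(\Def(X),t)\cong\Def(\cX_t)\times(\C^k,0)$, and for $t\in D$ the reduction of this germ is smooth. So (a) asks that $\Def(\cX_t)$ be smooth. Option (b) asks the same thing, because a germ with smooth reduction whose Zariski tangent space has the dimension of that reduction is itself smooth. Nothing in the hypotheses makes $\Def(X)$ generically smooth, or even generically reduced. The paper stresses that $\Def(X)$ may be singular, and Kuranishi spaces can in general be non-reduced along whole components. So the density of your $\De$ is not established. Incidentally, openness of versality is an independent theorem; it does not follow from the local constancy of $h^q(\cX_t,\O)$.

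The paper handles this step through the \emph{local} Kuranishi spaces. It stratifies each $\Def(X,x)$ by Theorem~\ref{thm: strat} and lets $l(x)$ be the least index with $\ph_x(\Def(X))\sb S_0\sqcup\dots\sqcup S_{l(x)}$ as germs. It takes $\De:=\bigcap_x\ph_x^{-1}(S_{l(x)})$, componentwise; this is Zariski dense open by minimality of $l(x)$ and irreducibility. It then fixes a semi-universal deformation $\cU\to\Def(X,X^\sing)$ of the germ $(X,X^\sing)$ that stays versal at nearby points. For $t\in\De$ this gives a map $\Def(\cX_t)\to\Def(X,X^\sing)$ sending the origin to $\ph(t)\in\prod_xS_{l(x)}$. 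The good resolution over $\prod_xS_{l(x)}$ is pulled back and extended by the identity away from $\cX_t^\sing$ to a good resolution of the semi-universal family over $\Def(\cX_t)$. Lemma~\ref{lem: CY1} then gives $b^{1,n-2}=0$ at \emph{every} $t\in\De$, with no semicontinuity or contradiction argument.

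Note, however, that this pull-back treats $\Def(\cX_t)\to\Def(X,X^\sing)$ as factoring through the reduced strata $\prod_xS_{l(x)}$. That is automatic only when $\Def(\cX_t)$ is reduced, and the paper does not address the non-reduced case. So the difficulty you isolate is real and is not resolved explicitly there either. Under that same tacit assumption, your global stratification would work just as well: restrict the resolution over $D$ to a semi-universal slice at $t$.
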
 
\begin{proof}
Suppose first that $\Def(X)$ is irreducible. For $x\in X^\sing $ the restriction morphism $\Def(X)\to\Def(X,x)$ is denoted by $\ph_x.$ By Theorem \ref{thm: strat} there exists a stratification $\Def(X,x)=S_0\sqcup\dots\sqcup S_m.$ The least integer $l$ for which $f_x(\Def(X))\sb S_0\sqcup\dots\sqcup S_l$ as germs is denote by $l(x).$ Put $A_x:=S_0\sqcup\dots\sqcup S_{l(x)-1}$ (which is empty if $l(x)=0$). It is an analytic subset and so is $\ph_x^{-1}(A_x)\sb\Def(X).$ By the definition of $l(x)$ we have $\ph_x^{-1}(A_x)\ne\Def(X)$ as germs. But $\Def(X)$ is irreducible and so $\ph_x^{-1}(A_x)$ has empty interior. In other words, $\ph_x^{-1}(S_{l(x)})\sb\Def(X)$ is a Zariski dense open subset. Set $\De:=\bigcap_{x\in X^\sing}\ph_x^{-1}(S_{l(x)}).$ Then $\De\sb\Def(X)$ is a Zariski dense open subset. 

If $\Def(X)$ has several irreducible components, we get a Zariski dense open subset of each irreducible component. Define $\De$ to be their union. Then $\De\sb\Def(X)$ is a Zariski dense open subset. 

Let $\cU\to\Def(X,X^\sing)$ be a semi-universal deformation of the germ $(X,X^\sing)$ such that for every $s\in \Def(X,X^\sing)$ the flat morphism $\cU\to\Def(X,X^\sing)$ is a versal deformation of the germ $(\cU_s,\cU_s^\sing).$ Make $\Def(X)$ so small that the restriction morphism $\Def(X)\to\Def(X,X^\sing)$ is well defined. We denote it by $\ph.$ 

Corollary \ref{cor: Kahler}, Theorem \ref{thm: lc} and Corollary \ref{cor: DB} imply that after making $\Def(X)$ smaller if necessary, there exists a semi-universal deformation $\cX\to\Def(X)$ whose fibres $\cX_t, t\in\Def(X),$ are log-canonical Gorenstein K\"ahler spaces with isolated singularities and with trivial canonical sheaf. 

We show that for every $t\in \De\sb\Def(X)$ the singularities of $\cX_t$ have $b^{1,n-2}=0.$ Let $\cY\to\Def(\cX_t)$ be a semi-universal deformation of $\cX_t.$ Since $\cU\to\Def(X,X^\sing)$ is a versal deformation of $(\cX_t,\cX_t^\sing)$ we get, after making $\Def(\cX_t)$ smaller if necessary, a morphism $\Def(\cX_t)\to\Def(X,X^\sing)$ under which the origin of $\Def(\cX_t)$ maps to $\ph(t).$ As $\ph(t)\in \prod_{x\in X^\sing}S_{l(x)}$ there exists a good resolution of the morphism $\Def(\cX_t)\times_{\Def(X,X^\sing)}\cU\to \Def(\cX_t).$ It extends to a good resolution of $\cY\to\Def(\cX_t).$ By Lemma \ref{lem: CY1} the singularities of $\cX_t$ have $b^{1,n-2}=0.$
\end{proof}

\section{Proof of Theorem \ref{thm: CY2}}\label{sect: proof of CY2}
We recall first the Goresky--MacPherson theorem.  
\begin{thm}[Corollary 1.12 of \cite{St83}]\label{thm: GM}
Let $X$ be a complex $n$-fold, $x\in X$ an isolated singularity and $f:(Y,E)\to(X,x)$ a good resolution. Then for integers $p<n$ and $q>n$ there are exact sequences
\[\begin{split}
0\to H^p(Y,Y\-E;\C)\to H^p(E,\C)\to H^{p+1}(X,X\-\{x\};\C)\to 0,\\ 0\to H^q(X,X\-\{x\};\C)\to H^q(Y,Y\-E;\C)\to H^q(E,\C)\to 0
\end{split}\]
consisting of the morphisms of mixed Hodge structures. Also the map $H^n(Y,Y\-E;\C)\to H^n(E,\C)$ is an isomorphism of mixed Hodge structures. \qed
\end{thm}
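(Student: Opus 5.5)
The statement is Steenbrink's Corollary 1.12, and it reduces to the long exact sequences of pairs together with excision. I would compare the triple $(Y, Y\-E, E)$ with $(X, X\-\{x\})$.

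\textbf{Setting up the sequences.} First replace $X$ by a small contractible Stein neighbourhood of $x$ (a cone over the link). Then $Y$ deformation retracts onto $E$, so the restriction $H^k(Y,\C)\to H^k(E,\C)$ is an isomorphism. Also $f$ restricts to an isomorphism $Y\-E\cong X\-\{x\}$.

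The long exact sequence of the pair $(Y,Y\-E)$ becomes
\[\cdots\to H^k(Y,Y\-E)\to H^k(E)\to H^k(Y\-E)\to H^{k+1}(Y,Y\-E)\to\cdots.\]
The long exact sequence of the pair $(X,X\-\{x\})$, using $H^k(X)=0$ for $k>0$, gives isomorphisms
\[H^k(X\-\{x\})\cong H^{k+1}(X,X\-\{x\}) \qquad (k\ge1).\]
Comparing the two sequences via $f$ and the isomorphism $Y\-E\cong X\-\{x\}$, the maps $H^k(E)\to H^k(Y\-E)\cong H^{k+1}(X,X\-\{x\})$ and $H^{k+1}(X,X\-\{x\})\to H^{k+1}(Y,Y\-E)$ are the natural ones.

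\textbf{The key vanishing.} The main input is a Hodge-theoretic/topological vanishing. By Lefschetz duality on the oriented $2n$-manifold $Y$ with compact $E$,
\[H^k(Y,Y\-E)\cong H_{2n-k}(E).\]
Since $E$ has real dimension $2n-2$, this group vanishes for $k<2$. The decisive fact, which I expect to be the main obstacle, is Goresky--MacPherson's semipurity: the map $H^k(Y,Y\-E)\to H^k(E)$ is injective for $k\le n$ and surjective for $k\ge n$. Equivalently, the link $L=\bd X$ satisfies the statement that $H^k(E)\to H^k(L)$ is surjective for $k<n$ and zero for $k\ge n$.

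I would prove this via the decomposition theorem (or its isolated-singularity version): $Rf_*\C_Y[n]$ contains the intersection complex $IC_X$ as a direct summand, and the remaining summands are skyscrapers at $x$. This yields the weight/perversity constraints. Alternatively, it follows from the mixed Hodge structure argument: $H^k(E)$ has weights $\le k$, while $H^k(Y,Y\-E)\cong H_{2n-k}(E)(-n)$ has weights $\ge k$. For $k\le n$, the kernel of the map must be pure of weight $k$, and a hard Lefschetz argument with a relatively ample class shows it vanishes. I would take this from \cite{St83} as the essential input.

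\textbf{Assembling the result.} Given semipurity, split the long exact sequence of the pair $(Y,Y\-E)$.

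For $p<n$, injectivity of $H^p(Y,Y\-E)\to H^p(E)$ at degrees $p$ and $p+1$ (note $p+1\le n$) yields the short exact sequence
\[0\to H^p(Y,Y\-E)\to H^p(E)\to H^{p+1}(X,X\-\{x\})\to0.\]

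For $q>n$, surjectivity of $H^k(Y,Y\-E)\to H^k(E)$ at degrees $q-1\ge n$ and $q$ yields
\[0\to H^q(X,X\-\{x\})\to H^q(Y,Y\-E)\to H^q(E)\to0.\]

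At $k=n$ the map is both injective and surjective, so $H^n(Y,Y\-E)\to H^n(E)$ is an isomorphism.

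Finally, every map in these sequences is either a map in a long exact sequence of pairs or is induced by $f$. Such maps are morphisms of mixed Hodge structures by Deligne's theory, which gives the compatibility statement.
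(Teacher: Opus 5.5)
The paper gives no proof of this statement. It simply quotes Steenbrink's Corollary 1.12, introducing it as the Goresky--MacPherson theorem, so the substantive input is semipurity, which you also import. Your reduction is correct and standard. Use $Y\simeq E$, $Y\setminus E\cong X\setminus\{x\}$, and $H^k(X\setminus\{x\})\cong H^{k+1}(X,X\setminus\{x\})$ for $k\ge1$. Then injectivity of $a_k\colon H^k(Y,Y\setminus E)\to H^k(E)$ for $k\le n$ and surjectivity for $k\ge n$ give exactly the two short exact sequences and the isomorphism in degree $n$. Identifying the maps by naturality of the connecting homomorphism for $f\colon(Y,Y\setminus E)\to(X,X\setminus\{x\})$ is also right.

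There are three corrections.

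\textbf{The case $p=0$.} Your argument does not, and cannot, cover $p=0$. You correctly restrict $H^k(X\setminus\{x\})\cong H^{k+1}(X,X\setminus\{x\})$ to $k\ge1$, but you then assemble the sequence for all $p<n$. In fact, when $E$ is connected (e.g.\ $X$ normal), $H^0(Y,Y\setminus E)=0$, $H^0(E)=\C$ and $H^1(X,X\setminus\{x\})=\tilde H^0(L)=0$. So the first sequence, as printed, is not exact at $p=0$. What you actually prove is the range $1\le p<n$, or $p=0$ with reduced cohomology of $E$; this is all the paper ever uses.

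\textbf{The Hodge-theoretic sketch of semipurity.} This sketch contains a false step. By strictness, $\ker a_k$ contains every graded piece $\gr^W_jH^k(Y,Y\setminus E)$ with $j>k$, so it is not a priori pure of weight $k$. Injectivity for $k\le n$ is equivalent to two things together: $H^k(Y,Y\setminus E)$ is itself pure of weight $k$ (dually, $H^{2n-k}(E)$ is pure), and $\gr^W_k a_k$ is injective. Proving this is precisely the nontrivial content supplied by the decomposition theorem or a hard Lefschetz/negativity argument. Since you end up citing it, the proof survives, but the sketch should not be presented as an argument.

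\textbf{Source of the mixed Hodge structures.} For an analytic germ, the mixed Hodge structures on $H^k(Y,Y\setminus E)$, on the link and on $H^{k+1}(X,X\setminus\{x\})$ come from Steenbrink's (or Durfee's) construction. The same is true of the fact that the sequence of the pair is exact as mixed Hodge structures, since the connecting map passes through the link. None of this follows directly from Deligne's theory for algebraic pairs, so this compatibility should be cited as well.
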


Using the techniques of Friedman--Laza \cite[Theorem 2.1(iii),(v)]{FL} we prove
\begin{lem}\label{lem: inj}
Let $X$ be a complex space of dimension $n,$ $x\in X$ an isolated Du Bois singularity and $(Y,E)\to(X,x)$ be a good resolution. Then the map
\begin{equation}\label{inj}
H^2_E(Y,\Om^{n-1}_Y(\log E)(-E))\to H^2_E(Y,\Om^{n-1}_Y)
\end{equation}
induced by the inclusion $\Om^{n-1}_Y(\log E)(-E)\to \Om^{n-1}_Y$ is injective. 
\end{lem}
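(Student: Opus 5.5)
The inclusion $\Om^{n-1}_Y(\log E)(-E)\to\Om^{n-1}_Y$ has cokernel $\cG:=\Om^{n-1}_Y/\Om^{n-1}_Y(\log E)(-E)$, a coherent sheaf supported on $E$. The long exact sequence of local cohomology then shows that the map \eq{inj} is injective as soon as the connecting map out of $H^1_E(Y,\cG)=H^1(E,\cG)$ is zero. It is enough to prove that $H^1_E(Y,\Om^{n-1}_Y)\to H^1(E,\cG)$ is surjective. I will instead argue by duality, following Friedman--Laza. Replace $X$ by a contractible Stein neighbourhood of $x$, so that local duality is available as in \S\ref{sect: DB}. Then \eq{inj} is dual to the restriction map
\[
H^{n-2}(Y,\Om^1_Y)\to H^{n-2}(Y,\Om^1_Y(\log E)),
\]
using $\Om^{n-1}_Y(\log E)(-E)^\vee\otimes\om_Y\cong\Om^1_Y(\log E)$. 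So I must show that this map is surjective.

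The map factors through the exact sequence
\[
0\to\Om^1_Y\to\Om^1_Y(\log E)\xrightarrow{\rm res}\textstyle\bigoplus_i\O_{E_i}\to0 .
\]
Surjectivity is therefore equivalent to the vanishing of the residue map $H^{n-2}(Y,\Om^1_Y(\log E))\to\bigoplus_iH^{n-2}(E_i,\O_{E_i})$, or to the injectivity of the next map $\bigoplus_iH^{n-2}(E_i,\O_{E_i})\to H^{n-1}(Y,\Om^1_Y)$.

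The main obstacle is this step, which is where the Du Bois hypothesis enters. In the case $n\ge3$ my plan is to use Hodge theory on $E$. The coboundary $\bigoplus_i H^{n-2}(\O_{E_i})\to H^{n-1}(Y,\Om^1_Y)$ is essentially the Gysin map on $\gr_F$ composed with restriction to a neighbourhood of $E$, so I will compare it with topology. The local cohomology $H^\bullet_E(Y,\C)=H^\bullet(Y,Y\-E)$ carries a mixed Hodge structure, and $H^\bullet(Y)=H^\bullet(E)$. Theorem \ref{thm: GM} then relates these groups to $H^\bullet(X,X\-\{x\})$, which is the link cohomology. Du Bois means $b^{0q}=0$ for $q>0$, that is, $\gr^0_F H^q(E,\C)=H^q(E,\O_E)=0$ for $q>0$. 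Since $\gr_F^0H^{n-2}(E)$ surjects onto the $\gr_F^0$ weight piece coming from $\bigoplus_iH^{n-2}(E_i)$ via the Mayer--Vietoris spectral sequence, the classes in $\bigoplus_iH^{n-2}(\O_{E_i})$ lie in the image of the differential from lower strata.

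I would show that such classes map to zero under the residue composite by strictness of morphisms of mixed Hodge structures, as in Friedman--Laza \cite[Theorem 2.1(iii),(v)]{FL}, identifying the relevant map on $\gr^0_F$ of the weight spectral sequence. Failing that, I would pass to the dual formulation via $\Om^{n-1}_Y(\log E)(-E)\to\Om^{n-1}_Y$ together with Steenbrink vanishing, Theorem \ref{thm: DB van}, to show directly that $H^1(E,\cG)$ receives the relevant classes from $H^1_E(Y,\Om^{n-1}_Y)$.
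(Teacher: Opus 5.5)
\textbf{There is a genuine gap: the decisive step is never carried out.} Your reductions are correct. Injectivity of \eqref{inj} follows from surjectivity of $\ga\colon H^1_E(Y,\Om^{n-1}_Y)\to H^1(E,\cG)$. The local-duality reformulation as surjectivity of $H^{n-2}(Y,\Om^1_Y)\to H^{n-2}(Y,\Om^1_Y(\log E))$ is right, and so is the residue sequence. But the one concrete input you propose for the key step is false. The condition $b^{0q}=0$ for $q>0$ means $H^q(Y,\O_Y)\cong H^q(E,\O_E)$ for $q\ge1$, not $H^q(E,\O_E)=0$; the vanishing is what rationality gives.

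For a counterexample, take the cone over an abelian surface $A$ with respect to an ample line bundle $L$, so $n=3$; it is Du Bois but not Cohen--Macaulay. The blow-up of the vertex has $E=A$ and $H^1(E,\O_E)\cong\C^2$, so $\bigoplus_iH^{n-2}(E_i,\O_{E_i})\ne0$ and the injectivity you need is a real statement. Here, after restricting to $A$, the map $H^1(A,\O_A)\to H^2(Y,\Om^1_Y)$ becomes cup product with $\pm c_1(L)$ into $H^2(A,\Om^1_A)$. Its injectivity is hard Lefschetz, which no argument about $\gr^0_F H^\bullet(E)$ alone can produce.

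Your Mayer--Vietoris sentence is also confused: no differential lands in $E_1^{0,n-2}$. Since $\dim E_{ij}=n-3$, all of $\bigoplus_iH^{n-2}(E_i,\O_{E_i})$ equals $\gr^0_F\gr^W_{n-2}H^{n-2}(E,\C)$, which is a quotient of $H^{n-2}(E,\O_E)$. Consequently, if $X$ is moreover Cohen--Macaulay and $n\ge3$, then $H^{n-2}(E,\O_E)\cong H^{n-2}(Y,\O_Y)=0$ and your dual map is trivially surjective. That would cover the Gorenstein situations in which the paper uses the lemma, but not the lemma as stated.

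\textbf{The missing idea, which is the paper's proof, is to obtain surjectivity of $\ga$ from topology.} Put $Q:=\Om^{n-1}_Y(\log E)/\Om^{n-1}_Y$. Map the sequence $0\to\Om^{n-1}_Y\to\Om^{n-1}_Y(\log E)\to Q\to0$ to $0\to\cG\to\Om^{n-1}_Y(\log E)\otimes\O_E\to Q\to0$ by the quotient maps and the identity of $Q$. Naturality of connecting maps gives $\be=\ga\circ\al$, where $\al\colon H^0_E(Y,Q)\to H^1_E(Y,\Om^{n-1}_Y)$ and $\be\colon H^0(E,Q)\to H^1(E,\cG)$.

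The map $\be$ is $\gr^{n-1}_F$ of the morphism of mixed Hodge structures $H^n(Y,Y\-E;\C)\to H^n(E,\C)$, which is an isomorphism by Theorem \ref{thm: GM}. By strictness $\be$ is an isomorphism, hence $\ga$ is surjective. The paper also shows that $\al$ is an isomorphism using $b^{1,n-1}=0$ (Theorem \ref{thm: DB van}), which is where it invokes the Du Bois hypothesis. For the injectivity of \eqref{inj} alone, surjectivity of $\be$ already suffices. So the key input is the middle-degree isomorphism of Theorem \ref{thm: GM} together with strictness, not the Du Bois condition. Your sketch names Theorem \ref{thm: GM} but never puts it to work.
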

\begin{proof}
There is a commutative diagram
\[\begin{tikzcd}
0\ar[r]&\Om^{n-1}_Y\ar[r]\ar[d]&\Om^{n-1}(\log E)\ar[r]\ar[d]& \frac{\Om^{n-1}_Y(\log E)}{\Om^{n-1}_Y}\ar[r]\ar[d,equal]&0\\
0\ar[r]&\frac{\Om^{n-1}_Y}{\Om^{n-1}_Y(\log E)(-E)}\ar[r]&\frac{\Om^{n-1}_Y(\log E)}{\Om^{n-1}_Y(\log E)(-E)}\ar[r]& \frac{\Om^{n-1}_Y(\log E)}{\Om^{n-1}_Y}\ar[r]&0
\end{tikzcd}\]
with exact rows. The corresponding diagram
\begin{equation}\label{FL2}\begin{tikzcd}
H^0_E(Y,\frac{\Om^{n-1}_Y(\log E)}{\Om^{n-1}_Y})\ar[r,"\al"] \ar[d,equal]& H^1_E(Y,\Om^{n-1}_Y)\ar[d,"\ga"]\\
H^0_E(Y,\frac{\Om^{n-1}_Y(\log E)}{\Om^{n-1}_Y})\ar[r,"\be"] & H^1_E(Y,\frac{\Om^{n-1}_Y}{\Om^{n-1}_Y(\log E)(-E)})
\end{tikzcd}\end{equation}
commutes. The map $\al$ is part of the exact sequence
\[0=H^0_E(Y,\Om^{n-1}_Y(\log E))\to H^0_E(Y,\ts\frac{\Om^{n-1}_Y(\log E)}{\Om^{n-1}_Y})\xrightarrow{\al} H^1_E(Y,\Om^{n-1}_Y)\to H^1_E(Y,\Om^{n-1}_Y).\]
By Theorem \ref{thm: DB van} the last term vanishes and $\al$ is therefore an isomorphism. 

The map $\be$ is induced from the morphism $H^n(Y,Y\-E;\C)\to H^n(E,\C)$ of mixed Hodge structures which is by Theorem \ref{thm: GM} an isomorphism. It is a strict morphism relative to the Hodge filtrations and $\be$ is therefore an isomorphism.

As $\al,\be$ are isomorphisms in \eq{FL2} the right vertical arrow $\ga$ is also an isomorphism. There is now a commutative diagram
\[\begin{tikzcd}[column sep=small]
 & H^1_E(Y,\Om^{n-1}_Y)\ar[r,"\ga"]\ar[d]& H^1_E(Y,\frac{\Om^{n-1}_Y}{\Om^{n-1}_Y(\log E)(-E)})\ar[d,equal]\\
H^1(Y,\Om^{n-1}(\log E)(-E))\ar[r]\ar[d,"\ze"]& H^1(Y,\Om^{n-1}_Y)\ar[d,"\ep"]\ar[r,"\de"]& H^1(Y,\frac{\Om^{n-1}_Y}{\Om^{n-1}_Y(\log E)(-E)})\\
H^1(Y\-E,\Om^{n-1}_{Y\-E})\ar[r,equal]\ar[d]& H^1(Y\-E,\Om^{n-1}_{Y\-E})\ar[d]&\\
H^2_E(Y,\Om^{n-1}_Y(\log E)(-E))\ar[r,"\eqref{inj}"]\ar[d]&H^2_E(Y,\Om^{n-1}_Y)&\\
H^2(Y,\Om^{n-1}_Y(\log E)(-E))=0&&
\end{tikzcd}\]
where the second row, the left column and the middle column are exact. As $\ga$ is an isomorphism, the map $\de$ above has a splitting. This implies that the image of $\ep$ is equal to that of $\ze.$ Diagram chase shows then that \eqref{inj} is injective.
\end{proof}

The following lemma is the key to the proof of Theorem \ref{thm: CY2}.
\begin{lem}\label{lem: conn2}
Let $X$ be a compact complex $n$-fold with isolated Du Bois singularities, whose resolutions satisfy the $\partial\db$ lemma and such that $H^n(X^\reg,\C)\to H^{n+1}(X,X^\reg;\C)$ is surjective. Then for every good resolution  $(Y,E)\to (X,X^\sing)$ the connecting homomorphism
\e\l{conn} 
H^1(X^\reg,\Om^{n-1}_{X^\reg})\to H^2_E(Y,\Om^{n-1}_Y(\log E)(-E))
\e
is surjective.
\end{lem}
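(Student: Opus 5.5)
The plan is to use the two-step filtration of the target from the proof of Corollary \ref{cor: 1n-2},
\[0\to H^1(E,\Om^{n-1}_Y(\log E)\otimes_{\O_Y}\O_E)\xrightarrow{\ \mu\ } H^2_E(Y,\Om^{n-1}_Y(\log E)(-E))\to H^2_E(Y,\Om^{n-1}_Y(\log E))\to 0,\]
and to show that the image of \eqref{conn} contains the subspace $\im\mu$ and surjects onto the quotient. Surjectivity then follows by a five-lemma type argument.

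\textbf{The quotient.} This part is already available. The composite of \eqref{conn} with the projection to $H^2_E(Y,\Om^{n-1}_Y(\log E))$ is the connecting map of the local cohomology sequence for $\Om^{n-1}_Y(\log E)$, identifying $Y\-E$ with $X^\reg$. By Corollary \ref{cor: surj log} with $q=2$, and Theorem \ref{thm: iso Goren DB} to pass between Du Bois and log-canonical, this composite is surjective.

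\textbf{The subspace $\im\mu$.} Write $\cF:=\Om^{n-1}_Y(\log E)$ and use the sequence $0\to\cF(-E)\to\cF\to\cF\otimes\O_E\to0$.

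1. I first check a compatibility by diagram chase between the local cohomology sequences of $\cF(-E)$ and of $\cF$. The claim is that for $\be\in H^1(Y,\cF)$, the connecting image of $\be|_{Y\-E}$ in $H^2_E(Y,\cF(-E))$ equals, up to sign, $\mu(\be|_E)$. Here $\mu$ is the boundary map, using $H^1(E,\cF\otimes\O_E)=H^1_E(Y,\cF\otimes\O_E)$. It therefore suffices to show that the restriction
\[r:H^1(Y,\Om^{n-1}_Y(\log E))\to H^1(E,\Om^{n-1}_Y(\log E)\otimes\O_E)\]
is surjective.

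2. I interpret $r$ Hodge-theoretically, using Deligne's mixed Hodge theory and the $\partial\db$ lemma for $Y$ to get $E_1$-degeneration.
\begin{itemize}
\item The log de Rham complex $\Om^\bullet_Y(\log E)$ computes $H^\bullet(Y\-E,\C)=H^\bullet(X^\reg,\C)$ with its Hodge filtration, so $H^1(Y,\Om^{n-1}_Y(\log E))=\gr^{n-1}_FH^n(X^\reg,\C)$.
\item The complex $\Om^\bullet_Y(\log E)\otimes\O_E$ computes the cohomology of the deleted neighbourhood of $E$, i.e. $\bigoplus_x H^n(L_x,\C)$ for the links $L_x$, as a mixed Hodge structure. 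Hence $H^1(E,\cF\otimes\O_E)=\gr^{n-1}_F\bigoplus_x H^n(L_x)$.
\item Under these identifications, $r$ is $\gr^{n-1}_F$ of the restriction map $H^n(X^\reg)\to\bigoplus_x H^n(L_x)$.
\end{itemize}

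3. By excision and the long exact sequence of $(C(L_x),L_x)$ (the cone is contractible), $H^{n+1}(X,X^\reg;\C)\cong\bigoplus_x H^{n+1}(X,X\-\{x\})\cong\bigoplus_x H^n(L_x)$ as mixed Hodge structures. Under this isomorphism the connecting homomorphism in \eqref{top} becomes the restriction map of step 2.

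4. The hypothesis \eqref{top} says this morphism of mixed Hodge structures is surjective. Morphisms of mixed Hodge structures are strict for $F$, so $\gr^{n-1}_F$ of it, namely $r$, is surjective.

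\textbf{Main obstacle.} I expect step 2 to be the hard part: pinning down that $H^1(E,\Om^{n-1}_Y(\log E)\otimes\O_E)$ is exactly $\gr^{n-1}_F$ of the link cohomology. This needs degeneration of the relevant Hodge-to-de Rham spectral sequence (as in Steenbrink's treatment of the link mixed Hodge structure, \cite{St83}). It also requires that the identification in step 3 matches the sheaf-level restriction $r$ with compatible Hodge filtrations, and that sign conventions agree. I would first try to settle this via the functorial mixed Hodge complexes of \cite{PSt}, treating the inclusion of the punctured neighbourhood $V\-E\subset Y\-E$ (with $V$ as in the proof of Lemma \ref{lem: Kahler}) as a morphism of log pairs.
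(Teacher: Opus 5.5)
Your proposal is correct and is essentially the paper's proof. The paper builds an octahedral diagram for $\cF=\Om^{n-1}_Y(\log E)$ whose top square is exactly your compatibility $\mathrm{conn}(\be|_{Y\-E})=\pm\mu(\be|_E)$. It gets surjectivity of $r$ from strictness of the surjective mixed Hodge structure morphism $H^n(X^\reg,\C)\to H^n(L,\C)$, and surjectivity onto $H^2_E(Y,\Om^{n-1}_Y(\log E))$ from Corollary \ref{cor: surj log}, then finishes with the same diagram chase.

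One small correction: Theorem \ref{thm: iso Goren DB} requires Gorenstein, which the lemma does not assume. You don't need it, because the proof of Corollary \ref{cor: surj log} only uses Theorem \ref{thm: PSV}, and that requires just isolated Du Bois singularities and the $\partial\db$ lemma.
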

\begin{proof}
We begin with a general definition. Let $\cF$ be a locally free $\O_Y$ module. In the derived category of $\O_Y$ modules we define a commutative diagram
\begin{equation}\label{81}
\begin{tikzcd}
\cF(-E)\ar[r]\ar[d,equal]&\cF\ar[r]\ar[d] &\cF\otimes_{\O_Y}\O_E\ar[d]\\
\cF(-E)\ar[r]\ar[d]&\cF_{Y\-E}\ar[r]\ar[d,equal]& R\Ga_E\cF(-E)[1]\ar[d]\\
\cF \ar[r]& \cF_{Y\-E}\ar[r,equal]&R\Ga_E\cF[1].
\end{tikzcd}
\end{equation}
whose rows are distinguished triangles. The top row is the distinguished triangle corresponding to the sheaf exact sequence $0\to \cF(-E)\to\cF\to\cF\otimes_{\O_Y}\O_E\to0.$ The two other rows are obtained from the distinguished triangle $R\Ga_E\cG\to \cG\to \cG|_{Y\-E}\to R\Ga_E\cF[1]$ where $\cG$ is a sheaf on $Y.$ Taking $\cG=\cF(-E)$ and making the identification $\cF(-E)|_{Y\-E}=\cF|_{Y\-E}$ we get the middle row. Taking $\cG=\cF$ we get the bottom row. In the left column the morphism $\cF(-E)\to\cF$ is that of the top row. In the middle column the morphism $\cF\to\cF|_{Y\-E}$ is that of the bottom row. The right column is the distinguished triangle obtained from the octahedron axiom (which is therefore not unique).

Applying \eq{81} to $\cF=\Om^{n-1}_Y(\log E)$ and taking the long exact sequences we get a commutative diagram
\begin{equation}\label{oct}\begin{tikzcd}
H^1(Y,\Om_Y^{n-1}(\log E))\ar[r]\ar[d] & H^1(Y,\Om_Y^{n-1}(\log E)\otimes_{\O_Y}\O_E)\ar[d]\\
H^1(X^\reg,\Om^{n-1}_{X^\reg})\ar[r]\ar[d,equal] & H^2_E(Y,\Om_Y^{n-1}(\log E)(-E)) \ar[d]\\
H^1(X^\reg,\Om^{n-1}_{X^\reg})\ar[r] & H^2_E(Y,\Om_Y^{n-1}(\log E)).
\end{tikzcd}\end{equation}
The top horizontal arrow is induced from the morphism $H^n(X^\reg,\C)\to H^n(L,\C)$ of mixed Hodge structures; see for instance \cite[Theorem 4.2 and Corollary 6.14]{PSt}. By hypothesis the map $H^n(X^\reg,\C)\to H^n(L,\C)$ is surjective. As it is a strict morphism relative to the Hodge filtrations, the top horizontal arrow of \eq{oct} is surjective. 

The two other horizontal arrows are the connecting homomorphisms. By Corollary \ref{cor: surj log} the bottom one is surjective. Diagram chase shows that the middle one is surjective, completing the proof.
\end{proof}

Combining Lemmas \ref{lem: inj} and \ref{lem: conn2} we prove
\begin{cor}\l{cor: key surj2}
Let $X$ be as in Lemma \ref{lem: conn2} with $n\ge3.$ Suppose that there exists a good resolution of the morphism $\cX\to\Def(X)$ which is a semi-universal deformation of $X.$ Then for every $x\in X$ we have $b^{1,n-2}(X,x)=l^{1,n-2}(X,x)=0.$
\end{cor}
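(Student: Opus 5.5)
The plan is to run the argument of Lemma \ref{lem: CY1}, but with the sharper target $H^2_E(Y,\Om^{n-1}_Y(\log E)(-E))$ in place of $H^2_E(Y,\Om^{n-1}_Y(\log E))$. By Corollary \ref{cor: 1n-2} (the singularities are Du Bois by Theorem \ref{thm: iso Goren DB}, or directly by hypothesis), the dimension of this local cohomology group is $l^{1,n-2}+b^{1,n-2}$ summed over the singular points. It therefore suffices to show that
\[H^2_E(Y,\Om^{n-1}_Y(\log E)(-E))=0\]
for the good resolution $(Y,E)\to(X,X^\sing)$ induced from the good resolution $(\cY,\cE)\to(\cX,\cX^\sing)$ of the semi-universal family.

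The first step repeats the first half of the proof of Lemma \ref{lem: CY1}. Since $\om_X\cong\O_X$ and $X$ is log-canonical (the Calabi--Yau setting in which the corollary will be applied), contraction with the pulled-back $n$-form gives the morphism $\Th_Y(-\log E)\to\Om^{n-1}_Y$. Theorem \ref{thm: surj tang} shows that $H^1(Y,\Th_Y(-\log E))\to H^1(X^\reg,\Om^{n-1}_{X^\reg})$ is surjective. The local cohomology sequence then shows that the connecting homomorphism
\[H^1(X^\reg,\Om^{n-1}_{X^\reg})\to H^2_E(Y,\Om^{n-1}_Y)\]
vanishes; this is the first half of \eq{conn van}.

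The second step brings in the two new lemmas. The connecting homomorphisms are natural with respect to the inclusion $\Om^{n-1}_Y(\log E)(-E)\to\Om^{n-1}_Y$, which is the identity on $Y\-E=X^\reg$. Hence the composite
\[H^1(X^\reg,\Om^{n-1}_{X^\reg})\xrightarrow{\eq{conn}} H^2_E(Y,\Om^{n-1}_Y(\log E)(-E))\xrightarrow{\eq{inj}} H^2_E(Y,\Om^{n-1}_Y)\]
equals the vanishing connecting map of the first step. By Lemma \ref{lem: conn2} the first arrow is surjective, and by Lemma \ref{lem: inj} the second arrow is injective. A surjection followed by an injection can only be zero if the middle group is zero, so $H^2_E(Y,\Om^{n-1}_Y(\log E)(-E))=0$. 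Corollary \ref{cor: 1n-2} then gives $b^{1,n-2}=l^{1,n-2}=0$ at every point; at non-singular points the invariants vanish trivially.

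The main obstacle I expect is the compatibility check in the second step: that the connecting maps used in Lemma \ref{lem: conn2}, coming from the octahedral diagram \eq{81}, and in \eq{conn van} are both the restrictions of the natural local cohomology maps, so the triangle genuinely commutes. I would verify this by using the functoriality of the triangle $R\Ga_E\cG\to\cG\to\cG|_{Y\-E}\to$ in $\cG$ for the inclusion of sheaves, noting that it restricts to the identity on $Y\-E$. I also need to confirm that the trivial canonical sheaf and log-canonical hypotheses are available for the contraction morphism, either as implicit assumptions carried over from the Calabi--Yau context or via Theorem \ref{thm: iso Goren DB}.
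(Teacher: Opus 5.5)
Your proposal is correct and is essentially the paper's own proof. The paper likewise takes \eqref{conn van} from the proof of Lemma \ref{lem: CY1}, writes the vanishing connecting map as \eqref{conn} followed by \eqref{inj}, and uses Lemma \ref{lem: conn2} (surjectivity) and Lemma \ref{lem: inj} (injectivity) to get $H^2_E(Y,\Om^{n-1}_Y(\log E)(-E))=0$ before applying Corollary \ref{cor: 1n-2}. Your remark that the trivial canonical sheaf and log-canonicity have to be imported from the Calabi--Yau setting is apt: the paper's phrase ``by hypothesis \eqref{conn van} holds'' tacitly relies on exactly these assumptions, which are not among the hypotheses of Lemma \ref{lem: conn2}.
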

\begin{proof}
By hypothesis \eq{conn van} holds; that is, the composite map
\[H^1(X^\reg,\Om^{n-1}_{X^\reg})\xrightarrow{\eqref{conn}} H^2_E(Y,\Om^{n-1}_Y(\log E)(-E))\xrightarrow{\eqref{inj}} H^2_E(Y,\Om^{n-1}_Y)\]
vanishes. By Lemma \ref{lem: inj} the latter map \eqref{inj} is injective and so \eqref{conn} vanishes. By Lemma \ref{lem: conn2} however it is surjective. Consequently, $H^2_E(Y,\Om^{n-1}_Y(\log E)(-E))=0.$ By Corollary \ref{cor: 1n-2} we have $b^{1,n-2}(X,x)=l^{1,n-2}(X,x)=0, x\in X.$
\end{proof}

We now restate and prove Theorem \ref{thm: CY2}. 
\begin{thm}\label{thm: CY2 again}
Let $X$ be as in Lemma \ref{lem: conn2} with $n\ge3$ and which is K\"ahler. Then there exist a semi-universal deformation $\cX\to \Def(X)$ and a Zariski dense open subset $S\sb\Def(X)$ such that for every $s\in S$ the singularities of $\cX_s$ have $b^{1,n-2}=l^{1,n-2}=0.$
\end{thm}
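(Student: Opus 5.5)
The proof will follow the proof of Theorem \ref{thm: CY1 again} line by line, with Corollary \ref{cor: key surj2} taking the place of Lemma \ref{lem: CY1}.

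First I construct the open set. For each $x\in X^\sing$ let $\ph_x:\Def(X)\to\Def(X,x)$ be the restriction morphism, and take the stratification $\Def(X,x)=S_0\sqcup\dots\sqcup S_m$ of Theorem \ref{thm: strat}. On each irreducible component of $\Def(X)$, let $l(x)$ be the least index with $\ph_x(\Def(X))\sb S_0\sqcup\dots\sqcup S_{l(x)}$ as germs. Then $\ph_x^{-1}(S_{l(x)})$ is a Zariski dense open subset of that component. Intersect these sets over the finitely many $x\in X^\sing$, and take the union over the irreducible components. This gives a Zariski dense open subset $S\sb\Def(X)$.

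Next I check that the hypotheses of Corollary \ref{cor: key surj2} survive deformation. After shrinking $\Def(X)$, I need every fibre $\cX_s$ to be a compact log-canonical Gorenstein Kähler space with isolated singularities, trivial canonical sheaf, and condition \eq{top}.
\begin{itemize}
\item Kähler: Corollary \ref{cor: Kahler}. Kählerness of a resolution then gives the $\partial\db$ lemma.
\item Log-canonical with isolated singularities: Theorem \ref{thm: lc}. By Theorem \ref{thm: iso Goren DB} the singularities are therefore Du Bois.
\item Trivial canonical sheaf: Corollary \ref{cor: DB}.
\item Condition \eq{top}: the pair $(\cX_s,\cX_s^\reg)$ is homeomorphic to a pair obtained from $(X,X^\reg)$ by replacing each singular germ by its Milnor-fibre-type nearby fibre. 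So the surjectivity of $H^n(\cX_s^\reg,\C)\to H^{n+1}(\cX_s,\cX_s^\reg;\C)$ should be inherited from $X$.
\end{itemize}

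The last item is the step I expect to be the main obstacle, since the paper only asserts that \eq{top} is an open condition. My first attempt is a topological argument. Choose small contractible Milnor balls $B_x$ around the points $x\in X^\sing$, so that $\cX$ is topologically a product over $\Def(X)$ outside $\bigcup_x B_x$. By excision, $H^{n+1}(\cX_s,\cX_s^\reg)$ is a sum of local terms $H^{n+1}(B_{x,s},B_{x,s}\cap\cX_s^\reg)$. The map from $H^n$ of the regular part factors through restriction to the links $L_x$. Surjectivity onto the local terms should then follow by comparing, via the Mayer--Vietoris sequence, the cokernel for $\cX_s$ with that for $X$. The key input is that the link $L_x$ is unchanged while the fillings differ only inside the balls.

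Finally, fix $s\in S$ and let $\cY\to\Def(\cX_s)$ be a semi-universal deformation. Using a versal deformation $\cU\to\Def(X,X^\sing)$ of the singular germs, which is versal at every point, I get a map $\Def(\cX_s)\to\Def(X,X^\sing)$ sending the origin to $\ph(s)\in\prod_x S_{l(x)}$. Every local deformation of $\cX_s$ therefore stays in the strata $S_{l(x)}$. The good resolution over those strata given by Theorem \ref{thm: strat} pulls back to a good resolution of $\cY\to\Def(\cX_s)$. Corollary \ref{cor: key surj2}, applied to $\cX_s$, then gives $b^{1,n-2}=l^{1,n-2}=0$ at every singular point of $\cX_s$.
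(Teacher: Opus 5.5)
Your overall argument is the paper's proof. The paper takes $\De$ and $\ph$ from the proof of Theorem~\ref{thm: CY1 again} and reruns its last step with Corollary~\ref{cor: key surj2} in place of Lemma~\ref{lem: CY1}. It uses without proof that \eq{top} is an open condition, asserting this in the introduction on the grounds that it only involves singular cohomology.

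You are right that this step needs justification, but your sketch of it does not work as written. The connecting map $H^n(\cX_s^\reg,\C)\to H^{n+1}(\cX_s,\cX_s^\reg;\C)$ does \emph{not} factor through restriction to the old links $L_x$. Put $M_x:=\cX_s\cap B_x$. The local term $H^{n+1}(M_x,M_x^\reg;\C)$ is $\bigoplus_y H^n(L_y,\C)$, summed over the singular points $y\in M_x$ of $\cX_s$, and the links $L_y$ are new. The map factors through restriction to $M_x^\reg$, whose outer boundary is $L_x$. So its image is contained in the image of $H^n(M_x^\reg,\C)\to\bigoplus_{y\in M_x}H^n(L_y,\C)$. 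A Mayer--Vietoris comparison with $X$ along the unchanged $L_x$ cannot give surjectivity on its own; you need a local input about $M_x$.

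The missing input is $H^{n+1}(M_x;\C)=0$. For $s$ small, $M_x$ is homotopy equivalent to $\cX_s\cap\mathring B_x$. This is an $n$-dimensional Stein space, so by Hamm's theorem (Andreotti--Frankel in the smooth case) it has the homotopy type of a CW complex of dimension $\le n$. The exact sequence of the pair $(M_x,M_x^\reg)$ then shows that $H^n(M_x^\reg,\C)\to\bigoplus_{y\in M_x}H^n(L_y,\C)$ is surjective.

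The gluing then goes as follows. Given $(\beta_y)_y$, choose $b_x\in H^n(M_x^\reg,\C)$ restricting to $(\beta_y)_{y\in M_x}$. Condition \eq{top} for $X$ says exactly that $H^n(X^\reg,\C)\to\bigoplus_x H^n(L_x,\C)$ is surjective. So choose a class $a$ on $\cX_s\setminus\bigcup_x\mathring B_x\cong X\setminus\bigcup_x\mathring B_x\simeq X^\reg$ with $a|_{L_x}=b_x|_{L_x}$ for every $x$. Mayer--Vietoris glues $(a,(b_x)_x)$ to a class on $\cX_s^\reg$ that restricts to $(\beta_y)_y$.

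With this in place, the rest of your proof goes through exactly as in the paper.
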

\begin{proof}
Let $\De\sb\Def(X)$ and $\ph:\Def(X)\to \Def(X,X^\sing)$ be as in the proof of Theorem \ref{thm: CY1 again}. We show that for every $t\in \De$ the singularities of $\cX_t$ have $b^{1,n-2}=l^{1,n-2}=0.$ This is the same as the last step to the proof of Theorem \ref{thm: CY1 again}, with 
Corollary \ref{cor: key surj2} in place of Lemma \ref{lem: CY1}.
\end{proof}

\section{Proof of Theorem \ref{4}}\label{sect: proof of 4}
We begin by recalling the relevant vanishing results for isolated rational singularities. 
\begin{lem}[Lemma 2 of \cite{St97}]\label{lem: l02}
Let $X$ be a normal complex space and $x\in X$ an isolated rational singularity. Then for $p=0,1,2,\dots$ we have $l^{p0}(X,x)=l^{0p}(X,x)=0.$ \qed
\end{lem}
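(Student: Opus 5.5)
The plan is to use the mixed Hodge structure on the cohomology of the link $L$ of $x$, together with Hodge symmetry. First I will identify the invariants with Hodge-theoretic data. By the results quoted from \cite{PSt} (as in the proof of Lemma \ref{lem: conn2}), one has
\[
l^{pq}(X,x)=\dim_\C\gr^p_FH^{p+q}(L,\C).
\]
This comes from the log complex $\Om^\bullet_Y(\log E)\otimes_{\O_Y}\O_E$, which computes $H^\bullet(L,\C)$, and from the degeneration at $E_1$ of its Hodge spectral sequence. By the same token,
\[
l^{0q}(X,x)=\dim_\C H^q(E,\O_E)=\dim_\C\gr^0_FH^q(E,\C),
\]
and for $q<n$ we have $\gr^0_FH^q(E,\C)=\gr^0_FH^q(L,\C)$. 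Here $p=q=0$ is the trivial class, and the statement is meant for $p+q>0$.

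\textbf{Step 1: $l^{0q}=0$ for $q\ge1$.} Rationality gives $R^q\pi_*\O_Y=0$ for $q>0$. I would show that the natural map $(R^q\pi_*\O_Y)_x\to H^q(E,\O_E)$ is surjective. It factors through $\varprojlim_k H^q(Y,\O_Y/\mathcal I_E^k)$ by the theorem on formal functions, with the neighbourhood $V\simeq E$ as in \S\ref{sect: Kahl}. The surjectivity is the standard Du Bois/Steenbrink fact that $H^q(V,\O_V)\to\gr^0_FH^q(E,\C)=H^q(E,\O_E)$ is onto, which follows from the splitting argument of \cite[Lemma-Definition 3.4]{PSt} already used in Lemma \ref{lem: Kahler}. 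Hence $H^q(E,\O_E)=0$, so $l^{0q}=0$.

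\textbf{Step 2: $l^{p0}=0$ for $1\le p<n$.} For $k<n$ the mixed Hodge structure on $H^k(L,\C)$ has weights $\le k$; this is semipurity of the link, obtained from Theorem \ref{thm: GM} and the weights of $H^k(E)$. Hence $\gr^p_FH^p(L)$ can only receive contributions from Hodge numbers $h^{p,w-p}$ with $w\le p$, which forces $w=p$ and type $(p,0)$. By Hodge symmetry on $\gr^W_p$ we get $h^{p,0}=h^{0,p}$, and the latter is bounded by $\dim\gr^0_FH^p(L)=l^{0p}=0$ from Step 1.

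\textbf{Step 3: $p\ge n$.} Here I use the duality $l^{pq}=l^{n-p,n-q-1}$ recorded in \S\ref{sect: DB} to translate to weights $>k$ in the range $H^k(L)$, $k\ge n$. The symmetric argument applies there: weights are now $\ge k+1$, and the surviving extremal Hodge type is again conjugate to a $\gr^0_F$ piece. Alternatively, for $p>n$ the relevant sheaves vanish, and $p=n$ is handled directly by the duality $l^{n0}=l^{0,n-1}=0$.

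The main obstacle is Step 1: the surjectivity $R^q\pi_*\O_Y\to H^q(E,\O_E)$. It needs care with the Hodge filtration on $H^q(E)$ versus the link, but it is the classical input from Steenbrink's theory of Du Bois singularities, and rational singularities are Du Bois.
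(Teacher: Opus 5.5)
The paper gives no proof of this lemma. It quotes \cite[Lemma 2]{St97} and closes with \qed, so your proposal has to be judged on its own. It is a correct self-contained proof, and it uses only tools the paper already relies on: Theorem \ref{thm: GM}, the duality $l^{pq}=l^{n-p,n-q-1}$, and the mixed Hodge structure on the link.
\begin{itemize}
\item You identify $l^{pq}=\dim\gr^p_FH^{p+q}(L,\C)$.
\item For $q\ge1$, the surjection $H^q(E,\C)\to\gr^0_FH^q(E,\C)=H^q(E,\O_E)$ factors through $H^q(V,\O_V)=(R^q\pi_*\O_Y)_x=0$, where $V=\pi^{-1}(U)$ for a small contractible Stein $U$. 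This gives $l^{0q}=0$; the formal-functions detour is unnecessary.
\item For $1\le p<n$, Theorem \ref{thm: GM} makes $H^p(L,\C)$ a quotient of $H^p(E,\C)$, so its weights are $\le p$. Since $\gr^b_F=0$ for $b<0$, Hodge symmetry rules out types with $b<0$. Hence $l^{p0}=h^{p,0}(\gr^W_pH^p(L))=h^{0,p}(\gr^W_pH^p(L))\le l^{0p}=0$.
\end{itemize}
The citation buys the paper brevity. Your argument shows that rationality enters only through $h^q(E,\O_E)=0$, and that everything else is Hodge theory of the link.

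Two corrections to the write-up.

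\textbf{Step 3.} Its first sentence does not work as stated. For $p\ge n$, the classes in $\gr^p_FH^p(L,\C)$ have types $(p,w-p)$ with $w\ge p+1$. Their conjugates lie in $\gr^{w-p}_F$ with $w-p\ge1$, not in $\gr^0_F$, so no ``symmetric'' weight argument reduces this range to Step 1. Delete that sentence and keep your alternative, which is complete: $\Om^p_Y(\log E)=0$ for $p>n$, and $l^{n0}=l^{0,n-1}=0$ by duality and Step 1 (as $n\ge2$).

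\textbf{The case $p=0$.} Your caveat is correct and should be stated as a correction to the lemma rather than an aside. Since $X$ is normal, $E$ is connected and $l^{00}=h^0(E,\O_E)=1$, so the lemma as printed holds only for $p\ge1$. This is harmless for the paper, which uses only $l^{20}=l^{30}=l^{03}=0$ with $n=3$ in Corollary \ref{cor: l02}.
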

\begin{cor}\label{cor: l02}
Let $X$ be a normal complex $3$-fold and $x\in X$ an isolated rational singularity with link $L.$ Then $H^3(L,\C)=\gr_F^2H^3(L,\C).$
\end{cor}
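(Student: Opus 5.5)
We argue through the Hodge numbers of the mixed Hodge structure on $H^3(L,\C)$, where $L$ is the link of $x$, identified with the boundary of a small contractible Stein neighbourhood $X$ of $x$ with good resolution $(Y,E)\to(X,x)$. We have to show that $\gr_F^pH^3(L,\C)=0$ for $p\ne2$. The standard description of the mixed Hodge structure of the link (as in \cite[\S6]{PSt} or \cite{St97}) gives
\[
\dim\gr_F^pH^k(L,\C)=l^{p,k-p}(X,x),
\]
since $H^k(L,\C)$ is the hypercohomology on $E$ of $\Om^\bullet_Y(\log E)\otimes_{\O_Y}\O_E$ and the associated Hodge-to-de Rham spectral sequence degenerates. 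We take this identification as known. The task therefore reduces to showing $l^{p,3-p}(X,x)=0$ for $p=0,1,3$.

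For $p=0$ and $p=3$ we use Lemma~\ref{lem: l02}. It gives $l^{03}(X,x)=0$ directly, and it gives $l^{30}(X,x)=0$ by taking $p=3$ in $l^{p0}=0$. For $p=1$ we apply the symmetry $l^{pq}=l^{n-p,n-q-1}$ from \S\ref{sect: DB} with $n=3$, which gives $l^{12}=l^{2,0}$; this vanishes by Lemma~\ref{lem: l02}. The only survivor is $p=2$, namely $l^{21}=l^{1,1}$, which is consistent with the symmetry. Hence $H^3(L,\C)=\gr_F^2H^3(L,\C)$, that is, $F^2=H^3(L,\C)$ and $F^3=0$.

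The main obstacle is justifying the identification of $\gr_F^p$ with the groups $H^{k-p}(E,\Om^p_Y(\log E)\otimes\O_E)$, in particular the degeneration at $E_1$. If citing it is judged insufficient, we would argue directly with Hodge numbers as a fallback. The link $L$ is a compact real $5$-manifold, so it satisfies Poincaré duality $H^3(L)\cong H^2(L)^*(-3)$. Rationality of the singularity kills $l^{0q}$, hence $\gr_F^0$ in every degree, and the duality then transfers this vanishing to $\gr_F^3$ in complementary degree. That leaves only $\gr_F^1$ to handle, which we would treat via the same symmetry argument as above.
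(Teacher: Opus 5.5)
Your proposal is correct and is essentially the paper's own proof. The paper likewise kills $l^{12}$ through the duality $l^{12}=l^{20}$ and Lemma~\ref{lem: l02}, and kills $l^{30}$ and $l^{03}$ directly by Lemma~\ref{lem: l02}, implicitly using the same identification $\dim\gr_F^pH^k(L,\C)=l^{p,k-p}(X,x)$ that you state explicitly. Your fallback paragraph is unnecessary, and it does not actually avoid that identification, since turning $l^{0q}=0$ into $\gr_F^0=0$ already relies on it.
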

\begin{proof}
By duality $l^{12}(X,x)=l^{20}(X,x).$ By Lemma \ref{lem: l02} the latter vanishes, and so $l^{12}(X,x)=0.$ Again by Lemma \ref{lem: l02} we have $l^{30}(X,x)=l^{03}(X,x)=0.$ Thus $\gr_F^pH^3(L,\C)$ vanishes unless $p=2.$
\end{proof}

\begin{lem}\label{lem: H4}
Let $X$ be a normal complex $3$-fold and $x\in X$ an isolated rational singularity with link $L.$ Then $H^4(L,\C)=0.$
\end{lem}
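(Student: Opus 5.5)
The link $L$ is a compact real $5$-manifold, and it is the boundary of a small contractible Stein neighbourhood of $x$ minus the point. So Poincaré duality on $L$ gives $H^4(L,\C)\cong H_1(L,\C)\cong H^1(L,\C)^*$. It therefore suffices to prove $H^1(L,\C)=0$. I plan to do this through the mixed Hodge structure on $H^1(L,\C)$, showing that every Hodge graded piece vanishes.

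Take a good resolution $f:(Y,E)\to(X,x)$ and shrink $X$ to a contractible Stein neighbourhood of $x$, so that $L$ is homotopy equivalent to $Y\-E$. By the Goresky--MacPherson theorem (Theorem \ref{thm: GM}) and the standard description of the mixed Hodge structure on the link (as in \cite[Theorem 4.2 and Corollary 6.14]{PSt}), the graded pieces are
\[
\gr^p_FH^k(L,\C)\cong H^{k-p}(E,\Om^p_Y(\log E)\otimes_{\O_Y}\O_E).
\]
These have dimension $l^{p,k-p}(X,x)$ from \eq{DB link}, so $\dim_\C H^1(L,\C)=l^{01}(X,x)+l^{10}(X,x)$. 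Lemma \ref{lem: l02} gives $l^{01}(X,x)=l^{10}(X,x)=0$, hence $H^1(L,\C)=0$ and therefore $H^4(L,\C)=0$.

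As an alternative that avoids the degree bookkeeping, I can use the duality $l^{pq}=l^{n-p,n-q-1}$ with $n=3$ to read off $H^4(L,\C)$ directly. Its graded pieces have dimensions $l^{p,4-p}$ for $p=1,2,3$, namely $l^{13},l^{22},l^{31}$, which are dual to $l^{2,-1}=0$, $l^{1,0}$ and $l^{0,1}$ respectively. Lemma \ref{lem: l02} kills the last two, while $l^{04}$ and $l^{40}$ vanish by the same lemma. This is the same argument as Corollary \ref{cor: l02}, shifted up one degree.

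The main obstacle is justifying the identification of $\gr_F^pH^k(L,\C)$ with the groups defining $l^{p,k-p}$, including the convention for the degree index. Once that identification is in place, the lemma follows at once from Lemma \ref{lem: l02} and duality. A fallback is purely topological: rationality implies that $E$ satisfies $H^1(E,\O_E)=0$, which gives $H^1(E,\C)=0$ by Du Bois purity arguments. Theorem \ref{thm: GM} with $p=1$ then relates $H^1(E)$, $H^2(X,X\-\{x\})\cong H^1(L)$ and $H^1(Y,Y\-E)\cong H_5(E)=0$, which forces $H^1(L)=0$; Poincaré duality then gives the result.
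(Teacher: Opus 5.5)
Your proof is correct, but your main route differs from the paper's; your fallback is precisely the paper's argument. The paper quotes \cite[Lemma 12.1.1.1]{MK} for $H^1(E,\C)=0$. It feeds this into the $p=1$ sequence of Theorem \ref{thm: GM} to get $H^1(L,\C)\cong H^2(X,X\-\{x\};\C)=0$, and concludes by Poincar\'e duality. In that route the term $H^1(Y,Y\-E;\C)$ is irrelevant, since surjectivity of $H^1(E,\C)\to H^2(X,X\-\{x\};\C)$ already suffices.

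Your main route instead computes $H^1(L,\C)$ through its Hodge filtration, $\dim H^1(L,\C)=l^{01}+l^{10}$, and kills both terms with Lemma \ref{lem: l02}. Your alternative reads off $\gr_F^pH^4(L,\C)$ directly from $l^{pq}=l^{3-p,2-q}$. That is Corollary \ref{cor: l02} shifted up one degree, with Poincar\'e duality built into the symmetry of the $l^{pq}$.

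What this buys is uniformity: Lemma \ref{lem: H4} becomes a formal consequence of Lemma \ref{lem: l02}, using the same bookkeeping the paper already uses for $H^3(L,\C)$. You only need Lemma \ref{lem: l02} at $p=1$; the $p=4$ terms vanish for dimension reasons. That is just as well, since at $p=0$ the lemma cannot hold literally: $l^{00}=\dim H^0(E,\O_E)=1$.

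What it costs is reliance on the identification $\gr_F^pH^{p+q}(L,\C)\cong H^q(E,\Om^p_Y(\log E)\otimes_{\O_Y}\O_E)$, i.e.\ the Hodge theory of the link. You rightly flag this, though the paper itself uses it in Corollary \ref{cor: l02}. The paper's route needs only the topological Goresky--MacPherson sequence together with $H^1(E,\C)=0$. That vanishing is a short Hodge argument on $E$ alone: $\gr_F^0H^1(E,\C)=H^1(E,\O_E)=0$ for a rational singularity, and Hodge symmetry on $\gr^W_1H^1(E,\C)$ then forces $H^1(E,\C)=0$.
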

\begin{proof}
By \cite[Lemma 12.1.1.1]{MK} we have $H^1(E,\C)=0.$ By Theorem \ref{thm: GM} this implies $H^1(L,\C)=0.$ By Poincar\'e duality therefore $H^4(L,\C)=0.$
\end{proof}

We prove now the former part of Theorem \ref{4}. Let $X$ be as in Theorem \ref{4}, that is, a compact normal complex $3$-fold with isolated rational singularities and whose resolutions satisfy the $\partial\db$ lemma. Denote by $L=\coprod_{x\in X^\sing}L_x$ the union of the links. Let $(Y,E)\to(X,X^\sing)$ be a good resolution. Then \eq{top} is necessary and sufficient for the restriction homomorphism $H^3(X^\reg,\C)\to H^3(L,\C)$ to be surjective. The inequality \eq{DB complex} is equivalent to 
\begin{equation}\label{h0}
\dim_\C H^2(Y,\Om^2_Y(\log E)(-E))\le\dim_\C H^2(Y,\Om^2_Y(\log E)).
\end{equation}
\begin{prop}\label{prop: 41}
\eq{h0} is necessary and sufficient for $H^3(X^\reg,\C)\to H^3(L,\C)$ to be surjective.
\end{prop}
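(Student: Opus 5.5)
The plan is to read both conditions off a single long exact sequence of mixed Hodge structures, namely the sequence of the pair relating compactly supported and ordinary cohomology of $X^\reg$:
\[
H^3(X^\reg,\C)\to H^3(L,\C)\to H^4_c(X^\reg,\C)\to H^4(X^\reg,\C)\to H^4(L,\C).
\]
Here $H^*_c(X^\reg,\C)=H^*(Y,E;\C)$ and $L$ is the union of the links. All the maps are morphisms of mixed Hodge structures, so by strictness the functor $\gr_F^2$ keeps the sequence exact. I would then compare $\gr_F^2$ of each term with the coherent groups in \eq{h0}.

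First, I identify the Hodge-graded pieces with coherent cohomology on $Y$:
\[
\gr_F^2H^4_c(X^\reg,\C)\cong H^2(Y,\Om^2_Y(\log E)(-E)),\qquad \gr_F^2H^4(X^\reg,\C)\cong H^2(Y,\Om^2_Y(\log E)),
\]
\[
\gr_F^pH^k(L,\C)\cong H^{k-p}(E,\Om^p_Y(\log E)\otimes_{\O_Y}\O_E).
\]
These come from the log de Rham complexes $\Om^\bullet_Y(\log E)(-E)$ and $\Om^\bullet_Y(\log E)$, and from the complex $\Om^\bullet_Y(\log E)\otimes\O_E$ computing the link, as in \cite[Theorem 4.2 and Corollary 6.14]{PSt}. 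I will also check that the map $H^2(Y,\Om^2_Y(\log E)(-E))\to H^2(Y,\Om^2_Y(\log E))$ induced by the inclusion is $\gr_F^2$ of $H^4_c\to H^4$, and likewise for the link term. The main obstacle is that $Y$ is not assumed K\"ahler, only to satisfy the $\partial\db$ lemma. I need the logarithmic Hodge--de Rham spectral sequences to degenerate at $E_1$ and the Hodge filtrations to be the stupid filtrations. I would argue this as in the proof of Lemma \ref{lem: Kahler}: the $\partial\db$ lemma gives a pure Hodge structure on $H^*(Y)$, and the components of $E$ are projective because the resolution is chosen projective. Deligne's weight-spectral-sequence argument then runs unchanged, and this is the mixed-Hodge input already used in the earlier sections.

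Second, I use the vanishing results for $n=3$. By Lemma \ref{lem: H4}, $H^4(L,\C)=0$, so $\gr_F^2H^4_c(X^\reg,\C)\to\gr_F^2H^4(X^\reg,\C)$ is surjective. Hence it is injective if and only if $\dim H^2(Y,\Om^2_Y(\log E)(-E))\le\dim H^2(Y,\Om^2_Y(\log E))$, which is exactly \eq{h0}. By exactness of $\gr_F^2$, this injectivity is equivalent to surjectivity of $\gr_F^2H^3(X^\reg,\C)\to\gr_F^2H^3(L,\C)$.

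Finally, Corollary \ref{cor: l02} gives $H^3(L,\C)=\gr_F^2H^3(L,\C)$, so every other graded piece of the target vanishes. A morphism of mixed Hodge structures is strict, so it is surjective if and only if it is surjective on all $\gr_F^p$. Therefore $H^3(X^\reg,\C)\to H^3(L,\C)$ is surjective if and only if its $\gr_F^2$ is surjective, which by the previous step is equivalent to \eq{h0}.
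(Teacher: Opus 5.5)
Your proposal is correct and follows essentially the same route as the paper. The paper's exact sequence \eqref{h3} is exactly the $\gr_F^2$ piece of your topological sequence, with $H^3(L,\C)=\gr_F^2H^3(L,\C)$ from Corollary \ref{cor: l02} and $\gr_F^2H^4(L,\C)=0$ from Lemma \ref{lem: H4}, and both directions then follow from the same dimension count and strictness argument you give.
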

\begin{proof}
By Corollary \ref{cor: l02} and Lemma \ref{lem: H4} there is an exact sequence
\begin{equation}\label{h3}\begin{split}
H^1(Y,\Om^2_Y(\log E))\to H^3(L,\C)\to H^2(Y,\Om^2_Y(\log E)(-E))\\
\to H^2(Y,\Om^2_Y(\log E))\to \gr_F^2H^4(L,\C)=0.\end{split} \end{equation}
If \eq{h0} holds then the map $H^2(Y,\Om^2_Y(\log E)(-E))\to H^2(Y,\Om^2_Y(\log E))$ is an isomorphism and so the map $H^1(Y,\Om^2_Y(\log E))\to H^3(L,\C)$ is surjective. Now the map $H^3(X^\reg,\C)\to H^3(L,\C)$ is a morphism of mixed Hodge structures and by Corollary \ref{cor: l02} the map $\gr^p_FH^3(X^\reg,\C)\to\gr_F^pH^3(L,\C)$ vanishes for $p\ne 2.$ So $H^3(X^\reg,\C)\to H^3(L,\C)$ is surjective.   

Conversely, if $H^3(X^\reg,\C)\to H^3(L,\C)$ is surjective then by \eq{h3} the map $H^2(Y,\Om^2_Y(\log E)(-E))\to H^2(Y,\Om^2_Y(\log E))$ is an isomorphism, which implies \eq{h0}.
\end{proof}
\begin{rmk}
The exact sequence \eq{h3} shows that \eq{h0} is necessary and sufficient for the equality to hold in \eq{h0}. In terms of the mixed Hodge structure, the latter is equivalent to the equality
\begin{equation}\label{h00}\dim_\C\gr_F^2H^4(X,\C)=\dim_\C\gr_F^1H^2(X,\C).\end{equation}
We show that \eq{h00} is equivalent to the Betti number equality $b^4(X)=b^2(X).$ By Poincar\'e duality $H^4(X^\reg,\C)$ is dual to the compact support cohomology group $H^2_c(X^\reg,\C)$ which is isomorphic to $H^2(X,X^\sing;\C).$ As $X^\sing$ is isolated, we have $H^2(X,X^\sing;\C)\cong H^2(X,\C).$ So there is an exact sequence
\begin{equation}\label{h4}
H^3(X^\reg,\C)\to H^3(L,\C)\to H^4(X,\C)\to H^2(X,\C)\to H^4(L,\C)=0
\end{equation}
where the last term vanishes by Lemma \ref{lem: H4}. As \eq{h4} is exact, $b^4(X)=b^2(X)$ if and only if $H^3(X^\reg,\C)\to H^3(L,\C)$ is surjective, which is equivalent to \eq{h00}.
\end{rmk}

We prove next the latter part of Theorem \ref{4}. 
\begin{prop}\label{prop: 43}
Let $X$ be an algebraic variety. Then $H^3(X^\reg,\C)\to H^3(L,\C)$ is surjective if and only if $X$ is $\Q$-factorial.
\end{prop}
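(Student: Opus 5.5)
We follow Namikawa--Steenbrink \cite[Lemma 3.5]{NS}, working throughout with the good resolution $\pi:(Y,E)\to(X,X^\sing)$, the exceptional divisors $E_1,\dots,E_r$ and the link $L=\coprod_x L_x$. The strategy is to identify the cokernel of $H^3(X^\reg,\C)\to H^3(L,\C)$ with the failure of $\Q$-factoriality, i.e.\ with the rank of the group of Weil divisors modulo $\Q$-Cartier divisors.

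First we translate $\Q$-factoriality into cohomology. For a normal projective variety with rational singularities, a Weil divisor $D$ is $\Q$-Cartier at $x$ exactly when its local class vanishes in $H^2(L_x,\Q)$: the local class group of a rational singularity is $H^2(L_x,\Z)$ up to torsion, via $\mathrm{Pic}(Y\setminus E)$ and $H^1(\O)=H^2(\O)=0$ on a small neighbourhood. Since $H^1(E)=0$ by \cite[Lemma 12.1.1.1]{MK}, and since Theorem \ref{thm: GM} together with Lemma \ref{lem: l02} shows that $H^2(E,\C)$ is spanned by algebraic classes, $H^2(L,\Q)$ is the cokernel of the intersection map $\bigoplus\Q[E_i]\to H^2(E,\Q)$. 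Hence $X$ is $\Q$-factorial if and only if the natural map
\[
\mathrm{Cl}(X)\otimes\Q\cong \mathrm{Pic}(Y)\otimes\Q/\textstyle\bigoplus_i\Q[E_i]\ \longrightarrow\ H^2(L,\Q)
\]
is zero.

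Next we make this map Hodge-theoretic. Because $H^1(Y,\O_Y)=H^2(Y,\O_Y)$ agrees with $H^1,H^2$ of $X$ (rational singularities) and $X$ is projective, Lefschetz $(1,1)$ gives $\mathrm{Pic}(Y)\otimes\Q=H^2(Y,\Q)\cap H^{1,1}$. Thus $\Q$-factoriality is equivalent to the vanishing of the restriction $H^2(Y,\Q)\cap F^1\to H^2(L,\Q)$. By Corollary \ref{cor: l02} and duality $H^2(L)$ is pure of type $(1,1)$, so this is equivalent to the full restriction $H^2(Y,\C)\to H^2(L,\C)$ being zero, and hence to $H^2(X^\reg,\C)\to H^2(L,\C)$ being zero, because $H^2(Y)\to H^2(X^\reg)$ is surjective onto the part that reaches $H^2(L)$ by the exact sequence of the pair together with $H^3(Y,Y\setminus E)\cong H^3(E)=0$ in the relevant weights.

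Finally we apply duality. Poincar\'e--Lefschetz duality on $X^\reg$, a manifold with boundary $L$, makes the restriction maps $H^2(X^\reg)\to H^2(L)$ and $H^3(X^\reg)\to H^3(L)$ adjoint, in the sense that the image of one is the annihilator of the image of the other under the perfect pairing $H^2(L)\times H^3(L)\to\C$; this is the usual Lagrangian statement for the image of $H^*(M)\to H^*(\bd M)$. So $H^3(X^\reg)\to H^3(L)$ is surjective iff $H^2(X^\reg)\to H^2(L)$ is zero, which by the previous step is iff $X$ is $\Q$-factorial.

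The main obstacle is the first step: we have to justify, for an isolated rational $3$-fold singularity, that $\mathrm{Cl}(\O_{X,x})\otimes\Q\cong H^2(L_x,\Q)$, and in particular that every class in $H^2(L_x)$ comes from a divisor. We plan to take this from \cite[12.1]{MK} or Flenner's local computation, using $H^1(Y_x,\O)=H^2(Y_x,\O)=0$ for a neighbourhood $Y_x$ of $E$ and the exponential sequence.
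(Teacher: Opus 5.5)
Your outline is correct and reaches the same intermediate statement as the paper: $\coker(H^3(X^\reg,\C)\to H^3(L,\C))$ is dual to $\im(H^2(Y,\C)\to H^2(L,\C))=\im(H^2(Y,\C)\to H^2(E,\C)/\sum_i\C[E_i])$. The two proofs differ in how each half is obtained.

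The paper gets this identification in one step, by dualizing the map between the long exact sequences of the pairs $(Y,Y\-E)$ and $(X,X^\reg)$. It therefore never has to compare $H^2(Y)$ with $H^2(X^\reg)$. It then quotes Koll\'ar--Mori \cite[Proposition 12.1.6]{MK} as a black box for the equivalence with $\Q$-factoriality. You instead use the Lagrangian property of $\im(H^*(M)\to H^*(\bd M))$ for a compact model $M$ of $X^\reg$, add a comparison of $H^2(Y)$ with $H^2(X^\reg)$, and reprove the Koll\'ar--Mori criterion from local class groups and Lefschetz $(1,1)$.

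That makes your argument self-contained, but three justifications need to change.

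(a) The images of $H^2(Y)$ and $H^2(X^\reg)$ in $H^2(L)$ do not agree because $H^3(E)$ vanishes in the relevant weights. In fact $H^3(E)$ is pure of weight $3$ and can be non-zero even for rational singularities, e.g.\ after blowing up a curve of positive genus lying on $E$. The correct reason is excision plus Theorem \ref{thm: GM}: the composite $H^3(Y,Y\-E)\to H^3(Y)\to H^3(E)$ is an isomorphism, so $H^2(Y)\to H^2(X^\reg)$ is onto.

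(b) Knowing that $H^2(L)$ is of type $(1,1)$ does not by itself let you pass from rational $(1,1)$-classes to all of $H^2(Y,\C)$. You need a complement of $\ker(H^2(Y,\Q)\to H^2(L,\Q))$ that is a sub-Hodge structure, e.g.\ its orthogonal for a polarization. That complement is isomorphic to the image, hence of type $(1,1)$, hence spanned by divisor classes.

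(c) The proposition assumes $X$ algebraic, not projective. This is harmless: you may take $Y$ algebraic, hence Moishezon. Then Lefschetz $(1,1)$ holds, and $H^2(Y,\Q)$ embeds as a sub-Hodge structure of $H^2$ of a projective modification, which supplies the polarization needed in (b).

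Finally, the step you call the main obstacle is easier than you expect. Only injectivity of $\mathrm{Cl}(\O^{an}_{X,x})\otimes\Q\to H^2(L_x,\Q)$ is used; surjectivity is not needed. Injectivity follows from the exponential sequence on $U\-\{x\}$ together with $H^1(U\-\{x\},\O_U)\cong H^2_{\{x\}}(U,\O_U)=0$ for a small Stein neighbourhood $U$, since rational singularities are Cohen--Macaulay.
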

\begin{proof}
The resolution $Y\to X$ induces the morphism
\begin{equation}\label{NS}\begin{tikzcd}
H^3(X^\reg,\C)\ar[r,"\al"]\ar[d,equal]& H^4(Y,Y\-E;\C)\ar[r]& H^4(Y,\C)\\
H^3(X^\reg,\C)\ar[r,"\be"]& H^4(L,\C)\ar[r]\ar[u]& H^4(X,\C)\ar[u]
\end{tikzcd}\end{equation}
between the relative cohomology exact sequences. Taking the dual of \eq{NS} we get a commutative diagram
\begin{equation}\label{NS3}\begin{tikzcd}
0&\im \tp\al\ar[d,equal]\ar[l]& H^2(E;\C)\ar[l]\ar[d]& H^2(Y,\C)\ar[d]\ar[l]\\
0&\im\tp\be\ar[l]& H^3(L,\C)^*\ar[l]& (\coker\be)^*\ar[l]&0.\ar[l]
\end{tikzcd}\end{equation}
Denote by $E=\sum_iE_i$ the irreducible decomposition. Then by Theorem \ref{thm: GM} we may write $H^3(L,\C)^*= H^2(E,\C)/\sum_i \C[E_i].$ By \eq{NS3} therefore
\begin{equation}\label{NS5}
(\coker\be)^*=\im(H^2(Y,\C)\to H^2(E,\C)/\ts\sum_i \C[E_i]).
\end{equation}
By Koll\'ar--Mori \cite[Proposition 12.1.6]{MK} the right-hand side of \eq{NS5} vanishes if and only if $X$ is $\Q$-factorial, which completes the proof.
\end{proof}
\begin{rmk}
We need $X$ algebraic at the last step. Otherwise, the proof is the same as that of Namikawa--Steenbrink \cite[Lemma 3.5]{NS}.
\end{rmk}

Theorem \ref{4} follows from Propositions \ref{prop: 41} and \ref{prop: 43}. \qed

We finally prove the statement in Remark \ref{rmk: Ten2}.
\begin{prop}\label{prop: 45}
Let $X$ be a compact complex $n$-fold with isolated complete intersection singularities and whose resolutions satisfy the $\partial\db$ lemma. Denote by $L=\coprod_{x\in X^\sing}L_x$ the union of the links and suppose that 
\begin{equation}\label{h5}
\dim_\C H^2(Y,\Om^{n-1}_Y(\log E)(-E))\le\dim_\C H^2(Y,\Om^{n-1}_Y(\log E))
\end{equation}
Then the equality holds in \eq{h5}.
\end{prop}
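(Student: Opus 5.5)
The plan is to mimic the exact sequence \eq{h3} used in Proposition \ref{prop: 41}, now in dimension $n$ and with $\Om^{n-1}$ in place of $\Om^2$. First we compare $\cF(-E)$ and $\cF$ for $\cF=\Om^{n-1}_Y(\log E)$ through the short exact sequence $0\to\cF(-E)\to\cF\to\cF\otimes_{\O_Y}\O_E\to0$. Its long exact cohomology sequence reads
\[
H^1(Y,\cF)\to H^1(E,\cF\otimes\O_E)\to H^2(Y,\cF(-E))\xrightarrow{\mu} H^2(Y,\cF)\to H^2(E,\cF\otimes\O_E).
\]
By the mixed Hodge theory of the link \cite[Theorem 4.2 and Corollary 6.14]{PSt}, the groups $H^q(E,\cF\otimes\O_E)$ are $\gr_F^{n-1}H^{n-1+q}(L,\C)$. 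In particular $\dim H^2(E,\cF\otimes\O_E)=l^{n-1,2}=\sum_x l^{1,n-3}(X,x)$ by the symmetry $l^{pq}=l^{n-p,n-q-1}$. It suffices to show that $\mu$ is surjective. Once that holds, $\dim H^2(Y,\cF(-E))\ge\dim H^2(Y,\cF)$, and combined with the hypothesis \eq{h5} this gives equality.

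To show $\mu$ is surjective it is enough that $\gr_F^{n-1}H^{n+1}(L,\C)=0$, i.e.\ $l^{1,n-3}(X,x)=0$ at every singular point. This vanishing is the input from the complete intersection hypothesis. For an isolated complete intersection singularity of dimension $n\ge3$, the link $L$ is $(n-2)$-connected by Milnor/Hamm. Hence $H^k(L,\C)=0$ for $1\le k\le n-2$, and by Poincar\'e duality on the $(2n-1)$-manifold $L$ also for $n+1\le k\le 2n-2$. So $H^{n+1}(L,\C)=0$ and $l^{1,n-3}=0$ (for $n=3$ this is $H^4(L)=0$, consistent with Lemma \ref{lem: H4}). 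Here we use that $l^{pq}$ really equals the dimension of $\gr_F^pH^{p+q}(L,\C)$; this is standard, and it is what was used in Corollary \ref{cor: l02}.

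The only step I expect to need care is this identification of $H^q(E,\Om^{n-1}_Y(\log E)\otimes\O_E)$ with a graded piece of the Hodge filtration on $H^{n-1+q}(L,\C)$. It is exactly the identification the paper already uses in the proof of Lemma \ref{lem: conn2} (top arrow of \eq{oct}) and in \eq{h3}. It follows from the degeneration at $E_1$ of the Hodge--de Rham spectral sequence for the complex $\Om^\bullet_Y(\log E)\otimes\O_E$ computing $H^\bullet(L,\C)$, which holds under the $\partial\db$ hypothesis; alternatively one simply cites the vanishing $l^{1,n-3}=0$ for complete intersections from \cite[Theorems 5.2 and 5.3]{FL iso}. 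With that in place the argument is a one-line diagram chase. Finally, \eq{h5} is equivalent to \eq{DB complex} by the same dictionary used before Proposition \ref{prop: 41}, which gives Remark \ref{rmk: Ten2}.
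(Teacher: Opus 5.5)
Your proposal is correct and is the paper's proof. Like the paper, you take the long exact sequence of $0\to\cF(-E)\to\cF\to\cF\otimes\O_E\to0$ for $\cF=\Om^{n-1}_Y(\log E)$, note that the next term $H^2(E,\cF\otimes\O_E)=\gr_F^{n-1}H^{n+1}(L,\C)$ vanishes for isolated complete intersection singularities, and conclude that $H^2(Y,\cF(-E))\to H^2(Y,\cF)$ is surjective, so \eqref{h5} forces equality.

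Your use of Hamm's $(n-2)$-connectivity of the link plus Poincar\'e duality (valid for $n\ge3$) supplies the justification of $H^{n+1}(L,\C)=0$ that the paper only asserts. One small correction: the $E_1$-degeneration identifying $H^q(E,\cF\otimes\O_E)$ with $\gr_F^{n-1}H^{n-1+q}(L,\C)$ is a local fact about the link and does not use the global $\partial\db$ hypothesis.
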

\begin{proof}
Consider the exact sequence
\begin{equation}\label{n+1 L}
H^2(Y,\Om^{n-1}_Y(\log E)(-E))\\
\to H^2(Y,\Om^{n-1}_Y(\log E))\to \gr_F^{n-1}H^{n+1}(L,\C).
\end{equation}
But as the singularities of $X$ are complete intersections, we have $H^{n+1}(L,\C)=0.$ The first arrow of \eq{n+1 L} is therefore surjective and accordingly by \eq{h5} an isomorphism. This completes the proof.
\end{proof}

\section{Proof of Theorem \ref{thm: Fano}}\label{sect: proof of Fano}

We modify Corollary \ref{cor: key surj2} as follows.
\begin{lem}\label{lem: Fano}
Let $X$ be a compact log-canonical Gorenstein complex space with isolated singularities and with $\om_X^{-1}$ ample. Suppose that there exists a good resolution of the morphism $\cX\to\Def(X)$ which is a semi-universal deformation of $X.$ Then for every $x\in X$ we have $b^{1,n-2}(X,x)=l^{1,n-2}(X,x)=0.$
\end{lem}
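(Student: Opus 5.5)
We follow the proof of Corollary \ref{cor: key surj2}, replacing the Calabi--Yau inputs by Fano inputs. The argument has three steps.

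\textbf{Step 1 (the composite map vanishes).} Let $(Y,E)\to(X,X^\sing)$ be the induced good resolution. In the Fano case $\om_X$ is not trivial, so contraction with a holomorphic $n$-form is unavailable. Instead I would use the identification $\Th_Y(-\log E)\cong\Om^{n-1}_Y(\log E)\otimes\om_Y(E)^{-1}$. By Lemma \ref{lem: lc}, $\om_Y(E)=\pi^*\om_X(\sum_i(a_i+1)E_i)$ with $a_i+1\ge0$. Working locally near each singular point, where $\om_X\cong\O_X$, one gets sheaf morphisms $\Th_Y(-\log E)\to\Om^{n-1}_Y\to\Om^{n-1}_Y(\log E)$ on a neighbourhood of $E$, as in the proof of Lemma \ref{lem: CY1}. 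Local cohomology at $E$ depends only on a neighbourhood of $E$, so these local maps suffice there. Globally on $X^\reg$, twist by $\om_X$. We have $\Th_{X^\reg}\cong\Om^{n-1}_{X^\reg}\otimes\om_X^{-1}$, and since $\om_X$ is trivial near $X^\sing$ the connecting maps are compatible. By Theorem \ref{thm: surj tang} the map $H^1(Y,\Th_Y(-\log E))\to H^1(X^\reg,\Th_{X^\reg})$ is surjective. Hence the connecting homomorphism
\[H^1(X^\reg,\Om^{n-1}_{X^\reg}\otimes\om_X^{-1})\to H^2_E(Y,\Om^{n-1}_Y(\log E)(-E))\to H^2_E(Y,\Om^{n-1}_Y)\]
vanishes. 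Lemma \ref{lem: inj} makes the second arrow injective, so the first connecting map
\[H^1(X^\reg,\Om^{n-1}_{X^\reg}\otimes\om_X^{-1})\to H^2_E(Y,\Om^{n-1}_Y(\log E)(-E))\]
vanishes. Isolated log-canonical Gorenstein singularities are Du Bois by Theorem \ref{thm: iso Goren DB}, so Lemma \ref{lem: inj} applies.

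\textbf{Step 2 (the connecting map is surjective).} In the local cohomology sequence the next term is $H^2(Y,\Om^{n-1}_Y(\log E)(-E)\otimes\pi^*\om_X^{-1})$. It suffices to show it vanishes. Since $\Om^{n-1}_Y(\log E)(-E)\cong \Th_Y(-\log E)\otimes\om_Y$, this group is $H^2(Y,\Th_Y(-\log E)\otimes\om_Y\otimes\pi^*\om_X^{-1})$. A cleaner form is Serre duality: it is dual to $H^{n-2}(Y,\Om^1_Y(\log E)\otimes\pi^*\om_X)$. I would prove
\[H^{n-2}\bigl(Y,\Om^1_Y(\log E)\otimes\pi^*L^{-1}\bigr)=0,\qquad L=\om_X^{-1}\ \text{ample},\]
by Steenbrink-type or Akizuki--Nakano vanishing for log forms twisted by the pullback of an ample bundle: $H^q(Y,\Om^p_Y(\log E)\otimes\pi^*L^{-1})=0$ for $p+q<\dim\pi(\cdot)=n$. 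Here $p+q=n-1<n$, so the group vanishes. Steenbrink's theorem for semi-ample $\pi^*L$ of Iitaka dimension $n$ gives exactly this. Surjectivity of the connecting map follows.

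\textbf{Step 3 (conclusion).} By Steps 1 and 2, the map into $H^2_E(Y,\Om^{n-1}_Y(\log E)(-E))$ is both zero and surjective, so this group vanishes. By Corollary \ref{cor: 1n-2}, $b^{1,n-2}=l^{1,n-2}=0$ at every singular point.

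\textbf{Main obstacle.} The hard step is Step 1's compatibility bookkeeping: identifying $T_o\Def(X)=H^1(X^\reg,\Th_{X^\reg})$ via Schlessinger, which needs depth $\ge3$, valid since $n\ge3$ and $X$ is Cohen--Macaulay. One must also check that twisting by $\om_X^{\pm1}$ is harmless near $E$, because $\om_X$ is trivial on a Stein neighbourhood of $X^\sing$. The vanishing in Step 2 replaces the hypothesis \eq{top} and the $\partial\db$ input; no Kähler assumption is needed, since $X$ is projective.
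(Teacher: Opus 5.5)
Your Steps 1--3 follow the paper's proof exactly. The paper also kills the connecting map into $H^2_E(Y,\Om^{n-1}_Y\otimes\pi^*\om_X^{-1})$. It then uses Lemma \ref{lem: inj} to conclude that the connecting map into $H^2_E(Y,\Om^{n-1}_Y(\log E)(-E)\otimes\pi^*\om_X^{-1})$ is zero. Finally it quotes Steenbrink's vanishing $H^2(Y,\Om^{n-1}_Y(\log E)(-E)\otimes\pi^*\om_X^{-1})=0$, and your Step 2 is the Serre dual of that; Step 2 is fine, because $E$ is the whole exceptional locus of $\pi$.

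The problem is a claim in Step 1, carried over from the proof of Lemma \ref{lem: CY1}: that contracting with the pulled-back local generator of $\om_X$ sends $\Th_Y(-\log E)$ into $\Om^{n-1}_Y$ near $E$. Your own formula $\om_Y(E)=\pi^*\om_X(\sum_i(a_i+1)E_i)$ shows that the contraction identifies $\Th_Y(-\log E)$ with $\Om^{n-1}_Y(\log E)\otimes\pi^*\om_X^{-1}(-\sum_i(a_i+1)E_i)$. That sheaf lies in $\Om^{n-1}_Y\otimes\pi^*\om_X^{-1}$ only when every $a_i\ge0$. Suppose $a_1=-1$. Near a general point of $E_1=\{z_1=0\}$ the generator pulls back to $u\,\frac{dz_1}{z_1}\wedge dz_2\wedge\cdots\wedge dz_n$ with $u$ a unit. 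Contracting with $\partial/\partial z_2\in\Th_Y(-\log E)$ gives $-u\,\frac{dz_1}{z_1}\wedge dz_3\wedge\cdots\wedge dz_n$, which has a pole along $E_1$. So at a strictly log-canonical point you only learn that the connecting map into $H^2_E(Y,\Om^{n-1}_Y(\log E))$ vanishes, and Lemma \ref{lem: inj} cannot be applied. The paper's proof rests on the same inclusion, so this gap is shared with it rather than a departure from it.

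At canonical points (all $a_i\ge0$) your argument works, and more simply. There $\sum_i(a_i+1)E_i\ge E$, so the contraction already lands in $\Om^{n-1}_Y(\log E)(-E)\otimes\pi^*\om_X^{-1}$; Step 1 then needs no Lemma \ref{lem: inj}, and Step 2 finishes. At strictly log-canonical points, though, the conclusion itself appears to fail, so Step 1 cannot be repaired there.

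Let $X\subset\mathbb{P}^{g+1}$ be the projective cone over a general polarized K3 surface $(S,L)$ of genus $g=11$ or $g\ge13$. It is a Gorenstein $3$-fold with $\om_X\cong\O_X(-1)$ and a single log-canonical singular point. Blowing up that point gives exceptional divisor $E\cong S$ with discrepancy $-1$. By Ciliberto--Lopez--Miranda, $H^1(S,T_S\otimes L^{-1})=0$. From this one checks that $T^1$ of the vertex is concentrated in degree $0$, so every deformation of the germ is a cone over a deformation of $(S,L)$. Blowing up the section of vertices then gives a good resolution of $\cX\to\Def(X)$. Yet $l^{11}=h^1(E,\Om^1_Y(\log E)\otimes\O_E)=h^{1,1}(S)-1=19$.
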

\begin{proof}
We use the contraction morphism $\Th_Y(-\log E)\to\Om^{n-1}_Y\otimes \pi^*\om_X^{-1}.$ The connecting homomorphism $H^1(X,\Th_{X^\reg})\to  H^2_E(Y,\Om^{n-1}_Y\otimes\pi^*\om_X^{-1})$ vanishes. Write it as the composite map
\e\l{Fan1}
H^1(X,\Th_{X^\reg})\to H^2_E(Y,\Om^{n-1}_Y(\log E)(-E)\otimes\pi^*\om_X^{-1})\to H^2_E(Y,\Om^{n-1}_Y\otimes\pi^*\om_X^{-1}).
\e
Choose trivializations of $\om_X$ at the singular points of $X.$ Lemma \ref{lem: inj} implies then that the latter map in \eq{Fan1} is injective. But the composite map vanishes, so the former map of \eq{Fan1} vanishes. By the results of \cite{GNPP,St85} we have $H^2(Y,\Om^{n-1}_Y(\log E)(-E)\otimes f^*\om_X^{-1})=0.$ So the former map of \eq{Fan1} is surjective, which implies that $H^2_E(Y,\Om^{n-1}_Y(\log E)(-E)\otimes\pi^*\om_X^{-1})=0.$ By Corollary \ref{cor: 1n-2} therefore $b^{1,n-2}(X,x)=l^{1,n-2}(X,x)=0, x\in X.$
\end{proof}
We restate and prove Theorem \ref{thm: Fano}.
\begin{thm}
Let $X$ be a projective log-canonical Gorenstein $n$-fold, with isolated singularities and such that $\om_X^{-1}$ is ample. Then there exist a semi-universal deformation $\cX\to \Def(X)$ and a Zariski dense open subset $\De\sb \Def(X)$ such that for every $t\in\De$ the singularities of $\cX_t$ have $b^{1,n-2}=l^{1,n-2}=0.$
\end{thm}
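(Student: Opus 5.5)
The plan is to follow the proof of Theorem \ref{thm: CY1 again} word for word, with Lemma \ref{lem: Fano} playing the part of Lemma \ref{lem: CY1}. The argument has three steps: build the dense open set $\De$, show the relevant properties of $X$ persist in nearby fibres, and then apply Lemma \ref{lem: Fano} to each fibre over $\De$.

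\emph{Step 1: the dense open set.} For each $x\in X^\sing$, Theorem \ref{thm: strat} gives a stratification $\Def(X,x)=S_0\sqcup\dots\sqcup S_m$, and we write $\ph_x:\Def(X)\to\Def(X,x)$ for the restriction morphism. Fix an irreducible component of $\Def(X)$. Let $l(x)$ be the least index such that the image of that component lies in $S_0\sqcup\dots\sqcup S_{l(x)}$ as germs. The closed set $\ph_x^{-1}(S_0\sqcup\dots\sqcup S_{l(x)-1})$ then has empty interior in the component, so $\ph_x^{-1}(S_{l(x)})$ is Zariski dense open in it. Intersecting over the finitely many singular points and taking the union over components gives a Zariski dense open $\De\sb\Def(X)$. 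This step uses nothing special about the Fano condition.

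\emph{Step 2: openness of the hypotheses.} We must shrink $\Def(X)$ so that every fibre $\cX_t$ again satisfies the hypotheses of Lemma \ref{lem: Fano}.
\begin{itemize}
\item Isolated log-canonical Gorenstein normal singularities persist by Theorem \ref{thm: lc}.
\item Ampleness of the anticanonical sheaf needs separate care, because $\om_X$ is not trivial and Corollary \ref{cor: DB} is unavailable. Since $X$ is Gorenstein and the family is flat with Gorenstein fibres, $\om_{\cX/\Def(X)}$ is an invertible sheaf whose restriction to each fibre is $\om_{\cX_t}$.
\item Ampleness of the restriction of a line bundle is an open condition for a proper flat family, so after shrinking, $\om_{\cX_t}^{-1}$ is ample for all $t$. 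In particular each $\cX_t$ is projective, so its resolutions are K\"ahler-type and the $\partial\db$ lemma holds.
\end{itemize}
Alternatively, the Kodaira-type vanishing $H^i(X,\O_X)=0$ for $i>0$ gives $H^2(X,\O_X)=0$. Then projectivity lifts by Grothendieck existence plus Artin approximation, as in the remark after Theorem \ref{thm: CY1}, and Corollary \ref{cor: Kahler} is not needed.

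\emph{Step 3: apply Lemma \ref{lem: Fano} fibrewise.} Fix $t\in\De$ and take a semi-universal deformation $\cY\to\Def(\cX_t)$. Using the versality of $\cU\to\Def(X,X^\sing)$ at every point, we obtain a morphism $\Def(\cX_t)\to\Def(X,X^\sing)$ sending the origin to $\ph(t)\in\prod_x S_{l(x)}$.

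The hardest step is showing that the image of this morphism stays inside $\prod_x S_{l(x)}$. Deformations of $\cX_t$ are realized, by openness of versality, as nearby points of $\Def(X)$. Since $\ph_x^{-1}(S_{l(x)})$ is open in $\Def(X)$, the germ of $\Def(\cX_t)$ maps into $\prod_x S_{l(x)}$. Pulling back the good resolutions over the strata then gives a good resolution of $\cY\to\Def(\cX_t)$ near the singular points. Since the family is smooth elsewhere, this extends to a good resolution of the whole of $\cY\to\Def(\cX_t)$.

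Lemma \ref{lem: Fano} then gives $b^{1,n-2}=l^{1,n-2}=0$ at every singular point of $\cX_t$, which proves the theorem.
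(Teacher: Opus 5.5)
Your proposal is correct and follows the paper's proof essentially step for step. You build $\De$ from the stratifications of Theorem \ref{thm: strat}, and you shrink $\Def(X)$ so that the hypotheses persist; the paper does this via Fujino's vanishing $H^2(X,\O_X)=0$ together with Theorem \ref{thm: lc}, which is your alternative route. You then apply Lemma \ref{lem: Fano} fibrewise, exactly as in the last step of Theorem \ref{thm: CY1 again}; your openness-of-versality argument that $\Def(\cX_t)$ maps into $\prod_x S_{l(x)}$ simply spells out what the paper compresses into ``as $\ph(t)\in\prod_x S_{l(x)}$''.
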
 
\begin{proof}
Define $\De\sb\Def(X)$ in the same way as in the proof of Theorem \ref{thm: CY1 again}. Let $\cU\to\Def(X,X^\sing)$ and $\ph:\Def(X)\to \Def(X,X^\sing)$ be also as in the proof of Theorem \ref{thm: CY1 again}. 

Since $\om_X^{-1}$ is ample it follows by \cite[Theorem 1.2]{Fuj2} that $H^2(X,\O_X)=0.$ This and Theorem \ref{thm: lc} imply that after making $\Def(X)$ smaller if necessary, there exists a semi-universal deformation $\cX\to\Def(X)$ whose fibres $\cX_t, t\in\Def(X),$ are log-canonical Gorenstein K\"ahler spaces with isolated singularities and such that $\om_{\cX_t}^{-1}$ is ample. 

We show that for every $t\in \De\sb\Def(X)$ and every singular point $x\in\cX_t$ we have $b^{1,n-2}(\cX_t,x)=l^{1,n-2}(\cX_t,x)=0.$ This is the same as the last step to the proof of Theorem \ref{thm: CY1 again}, with Lemma \ref{lem: Fano} in place of Lemma \ref{lem: CY1}.
\end{proof}

Institute of Mathematical Sciences, ShanghaiTech University, 393 Middle Huaxia Road, Pudong New District, Shanghai, China 

e-mail address: yosukeimagi@shanghaitech.edu.cn

\end{document}